\begin{document}

\newcommand{\hilite}[1]{\colorbox{yellow}{$\displaystyle #1$}}
\let\underdot=\d 

\providecommand{\abs}[1]{\left\lvert#1\right\rvert} 
\providecommand{\acc}{\mathrm{acc}} 
\providecommand{\Aut}{\mathrm{Aut}} 
\providecommand{\C}{\mathbb{C}} 
\providecommand{\diag}{\mathrm{diag}} 
\providecommand{\F}{\mathbb{F}} 
\providecommand{\Gal}{\mathrm{Gal}} 
\providecommand{\GL}{\mathrm{GL}} 
\providecommand{\Ha}{\mathbb{H}} 
\providecommand{\Image}{\mathrm{Im}} 
\providecommand{\Inn}{\mathrm{Inn}} 
\providecommand{\Ker}{\mathrm{Ker}} 
\providecommand{\lcm}{\mathrm{lcm}} 
\providecommand{\N}{\mathbb{N}} 
\providecommand{\norm}[1]{\left\lVert#1\right\rVert} 
\providecommand{\Null}{\mathrm{Nullity}} 
\providecommand{\Q}{\mathbb{Q}} 
\providecommand{\R}{\mathbb{R}} 
\providecommand{\Rank}{\mathrm{Rank}} 
\providecommand{\SL}{\mathrm{SL}} 
\providecommand{\Span}{\mathrm{span}} 
\providecommand{\todo}{\hilite{\ensuremath{\backslash}todo}} 
\providecommand{\tr}{\mathrm{tr}} 
\providecommand{\Z}{\mathbb{Z}} 
\providecommand{\Sch}{\mathcal{S}} 
\providecommand{\S}{\bold{S}}
\providecommand{\Langle}{\left\langle}
\providecommand{\Rangle}{\right\rangle}
\providecommand{\lp}{\left(}
\providecommand{\rp}{\right)}
\newcommand{\sgn}{\operatorname{sgn}}
\providecommand{\Bn}{\mathbb{B}^n}
\providecommand{\Cd}{\mathbb{C}^d}
\providecommand{\Cn}{\mathbb{C}^n}
\providecommand{\fl}{\mathcal{L}}
\providecommand{\ft}{\mathcal{T}}
\providecommand{\dva}{dv_{\alpha}}
\providecommand{\lpa}{L_{\alpha}^{p}}
\providecommand{\apa}{A_{\alpha}^{p}}
\providecommand{\supp}{\textnormal{supp }}
\providecommand{\diam}{\textnormal{diam}}
\providecommand{\vr}{\varrho}
\providecommand{\K}[2]{K_{#1}^{(#2,\alpha)}}
\providecommand{\nk}[2]{k_{#1}^{(#2,\alpha)}}

\renewcommand{\qedsymbol}{\textsquare} 
\renewcommand{\d}[1]{\,d#1} 
\newcommand{\ip}[2]{\ensuremath{\left\langle#1,#2\right\rangle}}

 \newtheorem{thm}{Theorem}[section]
 \newtheorem{lem}[thm]{Lemma}
 \newtheorem{defn}[thm]{Definition}
  \newtheorem{cor}[thm]{Corollary}
 \newtheorem{prop}[thm]{Proposition}
 \theoremstyle{remark}
\newtheorem{exc}[thm]{Exercise}
 \newtheorem*{ex}{Example}
  \newtheorem{question}[thm]{Question}
\theoremstyle{definition}
\newtheorem{Def}{Definition}

\theoremstyle{remark}
\newtheorem{rem}[thm]{Remark}
\newtheorem*{rem*}{Remark}

\hyphenation{geo-me-tric}

\title[Essential Norm of Operators on Vector--Valued Bergman Space]
{The essential norm of operators on the bergman space of vector--valued
functions on the unit ball}

\author[R. Rahm]{Robert S. Rahm}
\address{Robert S. Rahm, School of Mathematics\\ Georgia Institute of Technology
\\ 686 Cherry Street\\ Atlanta, GA USA 30332-0160}
\email{rrahm3@math.gatech.edu}

\author[B. D. Wick]{Brett D. Wick$^\dagger$}
\address{Brett D. Wick, School of Mathematics\\ 
Georgia Institute of Technology\\ 686 Cherry Street
\\ Atlanta, GA USA 30332-0160}
\email{wick@math.gatech.edu}
\thanks{$\ddagger$ Research supported in part by National Science 
Foundation DMS grants \#0955432.}

\subjclass[2000]{32A36, 32A, 47B05, 47B35}
\keywords{Bergman Space, Carleson measure, Toeplitz operator, Compact operator, 
Berezin transform}

\begin{abstract}
Let $A_{\alpha}^{p}(\Bn;\Cd)$ be the weighted Bergman space on the unit ball 
$\Bn$ of $\Cn$ of functions taking values in $\Cd$. For $1<p<\infty$ let 
$\mathcal{T}_{p,\alpha}$ be the algebra generated by finite sums of finite 
products of Toeplitz operators with bounded matrix--valued symbols (this is 
called the Toeplitz algebra in the case $d=1$). We show that every $S\in 
\mathcal{T}_{p,\alpha}$ can be approximated by localized operators. This will be
used to obtain several equivalent expressions for the essential norm of 
operators in $\mathcal{T}_{p,\alpha}$. We then use this to characterize compact 
operators in  $A_{\alpha}^{p}(\Bn;\Cd)$. The main result generalizes previous 
results and states that an operator in $A_{\alpha}^{p}(\Bn;\Cd)$
is compact if only if it is in $\mathcal{T}_{p,\alpha}$ and its Berezin 
transform vanishes on the boundary. 
\end{abstract}

\maketitle

\section{Introduction and Statement of Main Results}
\subsection{Definition of the Spaces $L_{\alpha}^{p}$ and $A_{\alpha}^{p}$}

Let \begin{math}\Bn\end{math} denote the open unit ball in 
\begin{math}\C^{n}\end{math}. Fix some \begin{math}d\in\N
\end{math}. If \begin{math}f\end{math} is a function defined on 
\begin{math}\Bn\end{math} taking values in \begin{math}\Cd\end{math}
(that is, \begin{math}f\end{math} is vector-valued), we say that \begin{math}f
\end{math} is measurable if \begin{math}z\mapsto \Langle f(z),e\Rangle_{\Cd}
\end{math} is measurable for every \begin{math}e\in\Cd \end{math}.
For \begin{math} \alpha > -1
\end{math}, let 
\begin{align*}
\dva (z) := c_{\alpha}(1-|z|^{2})^{\alpha}dV (z) 
\end{align*}
where $dV$ is volume measure on $\Bn$ and \begin{math}c_{\alpha} \end{math} is a
constant such that \begin{math}\int_{\Bn}\dva (z) = 1 \end{math}. 
For vectors in $\Cd$, let $\norm{\cdot}_{p}$ denote the $p$--norm on 
$\Cd$. That is, if $v=(v_1,\ldots,v_d)$ then
$\norm{v}_{p}:=\left(\sum_{i=1}^{d}\abs{v_i}^{p}\right)^{1/p}$.
Define 
\begin{math} L_{\alpha}^{p}(\Bn;\Cd)\end{math} to be the set of 
all measurable functions on \begin{math}\Bn\end{math} taking values in 
\begin{math}\Cd\end{math} such that 
\begin{align*}
\norm{f}_{L_{\alpha}^{p}(\Bn;\Cd)}^{p} 
:=\int_{\Bn}\norm{f(z)}_{p}^{p}dv_{\alpha}(z) < \infty.
\end{align*}
It should be noted that \begin{math}L_{\alpha}^{2}(\Bn;\Cd)\end{math} is a 
Hilbert Space with inner product:
\begin{align*}
\ip{f}{g}_{L_{\alpha}^{2}(\Bn;\Cd)} := \int_{\Bn}\ip{f(z)}{g(z)}_{\Cd}\dva (z).
\end{align*}

Similarly, a function \begin{math}f\end{math} is said to be 
holomorphic if \begin{math}z\mapsto\Langle f(z),e\Rangle_{\Cd} \end{math} is a 
holomorphic function for every \begin{math}e\in\C^{d}\end{math}. Since 
$\Cd$ is a finite dimensional space, this is equivilent to requiring that $f$
be holomorhpic in each component function. 
Define \begin{math}A_{\alpha}^{2}(\Bn;\Cd)\end{math} to be the 
set of holomorphic functions on $\Bn$ that are also in 
\begin{math}L_{\alpha}^{2}(\Bn;\Cd)\end{math}. Finally, let 
$\mathcal{L}(L_{\alpha}^{p}(\Bn;\Cd))$ denote the bounded linear 
operators on $L_{\alpha}^{p}$. Define $\mathcal{L}(A_{\alpha}^{p}(\Bn;\Cd))$
similarly.

\subsection{Background for the Scalar--Valued Case}
For the moment, let \begin{math}d=1\end{math}.
Recall the reproducing kernel:
\begin{align*}
K_{z}^{(\alpha)}(w)=K^{(\alpha)}(z,w):=\frac{1}{(1-\overline{z}w)^{n+1
+\alpha}}.
\end{align*} 

\noindent That is, if \begin{math}f\in A_{\alpha}^{2}(\Bn;\C)\end{math} 
there holds: 
\begin{align*}
f(z) = \ip{f}{K_{z}^{(\alpha)}}_{A_{\alpha}^{2}(\Bn;\C)}=
\int_{\Bn}\frac{f(w)}{(1-z\overline{w})^{n+1+\alpha}}\dva (w).
\end{align*}

\noindent Recall also the normalized reproducing kernels 
\begin{math}k_{z}^{(p,\alpha)}(w) =
\frac{(1-|z|^{2})^{\frac{n+1+\alpha}{q}}}
{(1-\overline{z}w)^{n+1+\alpha}}\end{math}, where 
$q$ is conjugate exponent to $p$. There holds that
$\norm{k_{z}^{(p,\alpha)}}_{A_{\alpha}^{p}(\Bn;\C)}\simeq 1$, where 
the implied constant is independent of $z$.

The reproducing kernels allow us to explicitly write the orthogonal 
projection from \begin{math}L_{2}^{\alpha}(\Bn;\C) \end{math} to 
\begin{math}A_{\alpha}^{2}(\Bn;\C) \end{math}:
\begin{align*}
(P_{\alpha}f)(z)= \ip{f}{K_{z}^{(\alpha)}}_{L_{\alpha}^{2}(\Bn;\C)}.
\end{align*}
Let \begin{math}\phi\in L^{\infty}(\Bn)\end{math}. The Toeplitz operator with 
symbol \begin{math}\varphi\end{math} is defined to be:
\begin{align*}
T_{\varphi}:=P_{\alpha}M_{\varphi}.
\end{align*}
Where \begin{math}M_{\phi}\end{math} is the multiplication operator. So, we have
that: \begin{math}(T_{\varphi}f)(z) = 
\ip{\varphi f}{K_{z}^{(\alpha)}}_{L_{\alpha}^{2}}\end{math}. If $T$ is an 
operator on $A_{\alpha}^{p}(\Bn;\C)$, the 
Berezin transform of \begin{math}T\end{math}, denoted 
\begin{math}\widetilde{T} \end{math} is a function on \begin{math}\Bn\end{math}
defined by the formula: \begin{math} \widetilde{T}(\lambda) = 
\ip{Tk_{\lambda}^{(p,\alpha)}}
{k_{\lambda}^{(q,\alpha)}}_{A_{\alpha}^{2}(\Bn)(\Bn;\C)} \end{math}.

\subsection{Generalization to Vector--Valued Case}
Now, we consider \begin{math}d\in\N\end{math} with $d>1$.
The preceding discussion can be carried over with a few modifications. 
First, the reproducing kernels remain the same, but the function 
\begin{math}f\end{math} is now \begin{math}\Cd\end{math}-valued and the 
integrals must be interpreted as vector--valued integrals 
(that is, integrate in 
each coordinate). To make this more precise, 
let $\{e_k\}_{k=1}^{d}$ be the standard orthonormal basis for $\Cd$. 
If 
\begin{math}f\end{math} is
a \begin{math}\Cd\end{math}-valued function on \begin{math}\Bn\end{math}, its 
integral is defined as:
\begin{align*}
\int_{\Bn}f(z)\dva (z) &:= \sum_{k=1}^{d} \left(\int_{\Bn} 
\ip{f(z)}{e_{k}}_{\Cd}\dva (z)\right)e_{k}.
\end{align*}

Let $L_{M^{d}}^{\infty}$ denote the set of $d\times d$ matrix--valued 
functions, $\varphi$,
such that the function 
\begin{math}z\mapsto\norm{\varphi(z)}_{\Cd\to\Cd}\end{math} 
is in \begin{math}L^{\infty}(\Bn;\C) \end{math}. Note that it is not 
particularly important which matrix norm is used, since 
\begin{math}\Cd\end{math} is finite dimensional and all norms are equivalent. 
The second change is that the symbols of Toeplitz operators are now 
matrix--valued functions in $L_{M^{d}}^{\infty}$.

Define $\mathcal{T}_{p,\alpha}$ to be the operator--norm topology 
closure of the set of finite sums of 
finite products of Toeplitz operators with $L_{M^{d}}^{\infty}$ symbols.

Finally, we change the way that we define the Berezin transform of an operator.
The Berezin transform will be a matrix--valued function, given by the following
relation (see also \cites{AE,K}):

\begin{align}
\ip{\widetilde{T}(z)e}{h}_{\Cd}=\ip{T(k_{z}^{(p,\alpha)} e)}
{k_{z}^{(q,\alpha)} h}_{A_{\alpha}^{2}}
\end{align}
for \begin{math}e,h\in\Cd\end{math}. (Again, $q$ is conjugate
exponent to $p$).

We are now ready to state the main theorem of the paper.
\begin{thm}Let \begin{math}1<p<\infty\end{math} and 
\begin{math}\alpha > -1\end{math} and \begin{math}S\in\mathcal{L}(\apa,\apa)
\end{math}. Then $S$ is compact if and only if \begin{math}S\in
\mathcal{T}_{p,\alpha}\end{math} and \begin{math}\lim_{\abs{z}\to 1}
\widetilde{S}(z)=0\end{math}.
\end{thm}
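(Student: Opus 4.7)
The plan is to prove the two implications of the biconditional separately; the forward direction is essentially classical functional analysis, while the converse uses the approximation-by-localized-operators result promised in the abstract.

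\medskip

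\noindent\textbf{Compact implies both conditions.} For the vanishing of $\widetilde{S}(z)$, note that the normalized reproducing kernels $k_{z}^{(p,\alpha)}$ and $k_{z}^{(q,\alpha)}$ are uniformly bounded in their respective spaces and tend to zero pointwise on $\Bn$, which gives weak convergence to zero in $\apa$ and in the dual. Compactness of $S$ upgrades $S(k_{z}^{(p,\alpha)} e)$ to norm convergence to zero, and pairing against the bounded family $k_{z}^{(q,\alpha)} h$ kills every matrix entry of $\widetilde{S}(z)$. For membership in $\mathcal{T}_{p,\alpha}$, the idea is that a rank-one operator $f \mapsto \ip{f}{K_{z}^{(\alpha)} e_{j}}_{A_{\alpha}^{2}} K_{z}^{(\alpha)} e_{k}$ can be realized (after suitable truncation by a bounded cutoff) as a product of Toeplitz operators with matrix-valued $L^\infty$ symbols. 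Such rank-one operators span a dense subspace of the finite-rank operators, and since compact operators are norm limits of finite-rank operators, the norm-closed set $\mathcal{T}_{p,\alpha}$ contains every compact operator.

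\medskip

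\noindent\textbf{Converse.} This is the substantive direction. I would introduce a Bergman-metric covering $\{B(z_{j},r)\}$ of $\Bn$ of bounded overlap and, for each $S \in \mathcal{T}_{p,\alpha}$, a localized approximant $S_{r}$ built from the local pieces of $S$ near each $z_{j}$ (for instance, via multiplication by smooth cutoffs adapted to the covering, or via a Berezin-type convolution). The approximation result promised in the abstract then asserts that $\|S - S_{r}\| \to 0$ as $r \to 0$ for every $S \in \mathcal{T}_{p,\alpha}$. Combining this with a local comparability estimate, in which each piece of $S_r$ is controlled by $\|\widetilde{S}(z_j)\|$ in a neighborhood of $z_j$, yields the essential-norm estimate
\begin{align*}
\|S\|_{e} \,\lesssim\, \limsup_{|z|\to 1} \|\widetilde{S}(z)\|_{\Cd \to \Cd}.
\end{align*}
The reverse inequality is immediate from the forward direction already proved. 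Under the hypothesis that $\widetilde{S}(z) \to 0$, the right-hand side vanishes, forcing $\|S\|_{e}=0$, so $S$ is compact.

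\medskip

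\noindent\textbf{Main obstacle.} The delicate step is the localization itself, especially its interaction with the vector-valued setting: showing that finite products of Toeplitz operators with matrix-valued $L^{\infty}$ symbols admit a good localized approximation requires off-diagonal decay estimates on the Bergman kernel combined with a Schur-type test that respects the $\Cd$-norms. Once localization is established for a single Toeplitz operator, extension to finite sums and products is essentially algebraic, and passing to the operator-norm closure $\mathcal{T}_{p,\alpha}$ is a standard density argument. The matrix-valued Schur test, together with the derivation of the essential-norm inequality above from the localization, is the place where the proof genuinely requires more than the scalar case $d=1$.
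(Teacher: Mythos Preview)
Your forward direction is fine and matches the paper. The gap is in the converse, at the step you call ``local comparability.'' You claim that each localized piece of $S_r$ is controlled by $\|\widetilde{S}(z_j)\|$, yielding $\|S\|_e \lesssim \limsup_{|z|\to 1}\|\widetilde{S}(z)\|$. But $\widetilde{S}(z)$ records only the single matrix of values $\langle S(k_z^{(p,\alpha)} e_i), k_z^{(q,\alpha)} e_j\rangle_{A^2_\alpha}$; it does not see how $S$ acts on the whole range of functions concentrated near $z$, and so it cannot bound the norm of a localized piece of $S$. The injectivity of the Berezin transform is a qualitative fact obtained by analytic continuation off the diagonal; there is no uniform inverse that would convert $\sup_z \|\widetilde{S}(z)\|$ into an operator-norm bound, and the quantitative essential-norm inequality you write down is neither proved in the paper nor, as far as is known, true for general $S\in\mathcal{T}_{p,\alpha}$.

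The paper's route is different and never attempts such an estimate. One introduces the translates $S_z := U_z^{(p,\alpha)} S (U_z^{(q,\alpha)})^{*}$, extends the map $z\mapsto S_z$ continuously (in $WOT$, and in $SOT$ when $S\in\mathcal{T}_{p,\alpha}$) to the maximal ideal space $M_{\mathcal{A}}$ of the algebra of bounded $\rho$-uniformly continuous functions on $\Bn$, and shows via the localization theorem together with an interpolation/compactness argument that $\|S\|_e \approx \sup_{x\in M_{\mathcal{A}}\setminus\Bn}\|S_x\|_{\mathcal{L}(A^p_\alpha)}$. This bounds the essential norm by full \emph{operator norms} of the boundary translates, not by Berezin values. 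The hypothesis $\widetilde{S}(z)\to 0$ enters only afterward: one checks that it forces the Berezin transform of each $S_x$ to vanish identically on $\Bn$, and \emph{then} injectivity of the Berezin transform gives $S_x=0$ for every $x\in M_{\mathcal{A}}\setminus\Bn$, hence $\|S\|_e=0$. The two-step passage through the operators $S_x$ is essential; your proposal tries to collapse it into a direct pointwise comparison that is not available.
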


\subsection{Discussion of the Theorem.}By now, there are many results that 
relate the compactness of an operator to its
Berezin transform.
It seems that the first result in this direction is due to Axler and 
Zheng. In \cite{AZ} they prove that if 
$T\in \mathcal{L}(A_{0}^{2}(\mathbb{B};\C))$ 
can be written as a finite sum of finite products of Toeplitz operators,
then $T$ is compact if and only if its 
Berezin transform vanishes on the boundary of $\mathbb{B}$ (recall that 
$(A_{0}^{2}(\mathbb{B};\C))$ is the standard Bergman space on the 
unit ball in $\C$). There are several results generalizing this to 
larger classes of operators, more general domains, and weighted Bergman spaces.
See, for example \cites{RAI,eng,cs,mz}.

There are also several results along these lines for more general operators
than those that can be written as finite sums of finite products of 
Toeplitz operators. In \cite{ENG} Engli\v{s} proves that any compact
operator is in the operator--norm topology closure of the set of finite
sums of finite products of Toeplitz operators (this is called the 
Toeplitz algebra).
In \cite{Sua}, Su{\'a}rez proves that an operator, $T\in 
\mathcal{L}(A_{0}^{p}(\Bn;\C))$ is compact if and only if it is in the
Toeplitz algebra and its Berezin transform vanishes on $\partial\Bn$. This
was extended to the weighted Bergman spaces $A_{\alpha}^{p}(\Bn;\C)$
in \cite{MSW} by Su{\'a}rez, Mitkovski, and Wick and to
Bergman spaces on the polydisc and bounded symmetric domains by Mitkovski and 
Wick in \cite{mw1} and \cite{mw2}.

\section{Preliminaries}
We first fix notation that will last for the rest of the paper. The vectors 
$\{e_{i}\}_{i=1}^{d}$, etc. will denote the standard orthonormal basis 
vectors in $\Cd$. The letter $e$ will always denote a unit vector in $\Cd$. 
For vectors in $\Cd$, 
$\norm{\cdot}_{p}$ will denote the $l^{p}$ norm on $\Cd$. If 
$M$ is a $d\times d$ matrix, $\norm{M}$ will denote any convenient matrix norm.
Since all norms of matrices are equivalent in finite dimensions, the exact norm
used does not matter for our considerations. Additionally, 
$M_{(i,j)}$ will denote the $(i,j)$ entry of $M$ and $E_{(i,j)}$ will be the
matrix whose $(i,j)$ entry is $1$ and all other entries are $0$.
Finally, to lighten notation, fix an integer $d>1$, an integer $n\geq 1$ and a 
real $\alpha > -1$. Because of
this, we will usually suppress these constants in our notation. 

\subsection{Well-Known Results and Extensions to the Present Case}
We will discuss several well-known results about the standard Bergman Spaces, 
$A_{\alpha}^{p}(\Bn;\C)$ and state and prove their generalizations to the 
present vector-valued Bergman Spaces, $\apa$. 

Recall the automorphisms, $\phi_{z}$, of the ball that 
interchange $z$ and $0$. The automorphisms are used to define the following 
metrics:
\begin{align*}
\rho(z,w):=\abs{\phi_{z}(w)} \quad\textnormal{and}\quad 
\beta(z,w):=\frac{1}{2}\log\frac{1+\rho(z,w)}{1-\rho(z,w)}.
\end{align*}

These metrics are invariant under the maps $\phi_{z}$. Define $D(z,r)$ to be the
ball in the $\beta$ metric centered at $z$ with raduis $r$. Recall the following
identity:
\begin{align*}
 1-\abs{\phi_{z}(w)}^{2}=\frac{(1-\abs{z}^{2})(1-\abs{w}^{2})}
 {\abs{1-\overline{z}w}^{2}}.
\end{align*}

The following change of variables formula is \cite{Zhu}*{Prop 1.13}:
\begin{align}
\label{eqn:cov}
\int_{\Bn}f(w)\dva (w) = \int_{\Bn}(f\circ \phi_{z})(w) 
\abs{\nk{z}{2}(w)}^2 \dva (w). 
\end{align}
The following propositions appear in \cite{Zhu}.

\begin{prop}\label{prop:soh2.20}
If \begin{math}a\in \Bn\end{math} and \begin{math}z\in D(a,r)\end{math}, 
there exists a constant depending only on \begin{math}r\end{math} such that 
\begin{math}1-|a|^2 \simeq 1-|z|^{2} \simeq \abs{1-\ip{a}{z}} \end{math}.
\end{prop}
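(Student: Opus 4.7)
The plan is to deduce everything from the Möbius--invariance identity
\[
1-\abs{\phi_{a}(z)}^{2}=\frac{(1-\abs{a}^{2})(1-\abs{z}^{2})}{\abs{1-\ip{a}{z}}^{2}},
\]
which is already recorded in the paper. The hypothesis $z\in D(a,r)$ says $\beta(a,z)\le r$, and since $\beta$ is an increasing function of $\rho$, this is equivalent to
\[
\rho(a,z)=\abs{\phi_{a}(z)}\le s:=\tanh(r)<1,
\]
where $s$ depends only on $r$. Consequently $1-s^{2}\le 1-\abs{\phi_{a}(z)}^{2}\le 1$.

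Rearranging the displayed identity gives
\[
\abs{1-\ip{a}{z}}^{2}=\frac{(1-\abs{a}^{2})(1-\abs{z}^{2})}{1-\abs{\phi_{a}(z)}^{2}},
\]
so that
\[
(1-\abs{a}^{2})(1-\abs{z}^{2})\le \abs{1-\ip{a}{z}}^{2}\le \frac{1}{1-s^{2}}(1-\abs{a}^{2})(1-\abs{z}^{2}).
\]
This already yields $\abs{1-\ip{a}{z}}^{2}\simeq (1-\abs{a}^{2})(1-\abs{z}^{2})$ with constants depending only on $r$.

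To upgrade this to $1-\abs{a}^{2}\simeq 1-\abs{z}^{2}$, I would combine the previous comparison with the elementary lower bound
\[
\abs{1-\ip{a}{z}}\ge 1-\abs{a}\abs{z}\ge 1-\abs{a}\ge \tfrac{1}{2}(1-\abs{a}^{2}),
\]
and the analogous inequality $\abs{1-\ip{a}{z}}\ge \tfrac{1}{2}(1-\abs{z}^{2})$. Squaring the first and inserting it in the upper bound of the previous display gives
\[
\tfrac{1}{4}(1-\abs{a}^{2})^{2}\le \frac{1}{1-s^{2}}(1-\abs{a}^{2})(1-\abs{z}^{2}),
\]
so that $1-\abs{a}^{2}\le \frac{4}{1-s^{2}}(1-\abs{z}^{2})$; the reverse inequality follows symmetrically. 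Then $1-\abs{a}^{2}\simeq 1-\abs{z}^{2}$, and feeding this back into the previous comparison yields $\abs{1-\ip{a}{z}}\simeq 1-\abs{a}^{2}$, completing all three equivalences with constants depending only on $r$.

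There is no real obstacle here: once one recognizes that the Möbius identity encodes $\abs{1-\ip{a}{z}}^{2}$ as the product $(1-\abs{a}^{2})(1-\abs{z}^{2})$ modulated by the hyperbolically controlled factor $1-\abs{\phi_{a}(z)}^{2}$, everything reduces to the crude inequality $\abs{1-\ip{a}{z}}\ge 1-\abs{a}\abs{z}$ used to break the symmetry between $\abs{a}$ and $\abs{z}$.
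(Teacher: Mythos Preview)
Your argument is correct. The paper does not actually supply a proof of this proposition; it simply records it as one of the ``propositions [that] appear in \cite{Zhu}''. What you wrote is essentially the standard derivation found there: use the M\"obius identity to express $\abs{1-\ip{a}{z}}^{2}$ in terms of $(1-\abs{a}^{2})(1-\abs{z}^{2})$ and the hyperbolically bounded factor $1-\abs{\phi_a(z)}^{2}$, then break the symmetry with $\abs{1-\ip{a}{z}}\ge 1-\abs{a}\abs{z}$. There is nothing to add.
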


\begin{prop}\label{prop:soh2.24}
Suppose $r>0$, $p>0$, and $\alpha > -1$. Then there exists a constant $C>0$ such
that
\begin{align*}
\abs{f(z)}^{p} \leq \frac{C}{(1-|z|^{2})^{n+1+\alpha}}
\int_{D(z,r)}\abs{f(w)}^{p}dv_{\alpha}(w)
\end{align*}
for all holomorphic \begin{math}f:\Bn\to\C\end{math} and all 
\begin{math}z\in \Bn
\end{math}.
\end{prop}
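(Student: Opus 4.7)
The plan is to first establish the inequality at $z=0$ via the standard sub-mean value property for holomorphic functions, and then transfer the estimate to an arbitrary $z\in\Bn$ by applying it to $f\circ\phi_z$, using the $\beta$-invariance of Bergman balls and the change of variables formula \eqref{eqn:cov}.

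For the base case $z=0$, since $f$ is holomorphic the function $|f|^p$ is plurisubharmonic, so the sub-mean value inequality on a Euclidean ball gives
\begin{align*}
|f(0)|^p \leq \frac{1}{V(D(0,r))}\int_{D(0,r)}|f(w)|^p dV(w).
\end{align*}
Observing that $D(0,r)=\{w : |w|<\tanh r\}$ has closure compactly contained in $\Bn$, the weight $(1-|w|^2)^{\alpha}$ is bounded above and below by positive constants depending only on $r$ and $\alpha$ on $D(0,r)$, so one may replace $dV$ by $\dva$ at the cost of a constant. This establishes the claim at $z=0$, where $(1-|z|^2)^{n+1+\alpha}=1$.

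For general $z$, I would apply the base case to the holomorphic function $g:=f\circ\phi_z$, noting $g(0)=f(z)$:
\begin{align*}
|f(z)|^p=|g(0)|^p \leq C_r\int_{D(0,r)}|f(\phi_z(w))|^p\dva(w).
\end{align*}
To turn this into an integral over $D(z,r)$, I apply \eqref{eqn:cov} to $|f|^p$ restricted to $D(z,r)$. Since $\phi_z$ is an involutive $\beta$-isometry with $\phi_z(0)=z$, it maps $D(0,r)$ bijectively onto $D(z,r)$, yielding
\begin{align*}
\int_{D(z,r)}|f(w)|^p\dva(w)=\int_{D(0,r)}|f(\phi_z(w))|^p\abs{\nk{z}{2}(w)}^2\dva(w).
\end{align*}

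The last step is to compare the two preceding integrals by bounding $\abs{\nk{z}{2}(w)}^2$ from below on $D(0,r)$. For such $w$ one has $|w|<\tanh r<1$, so $|1-\overline{z}w|\leq 1+|z||w|\leq 2$, and hence
\begin{align*}
\abs{\nk{z}{2}(w)}^2=\frac{(1-|z|^2)^{n+1+\alpha}}{|1-\overline{z}w|^{2(n+1+\alpha)}}\geq c_r(1-|z|^2)^{n+1+\alpha}.
\end{align*}
Combining the three displays yields the proposition. The only genuinely non-routine ingredient is this kernel lower bound, which controls the weighted Jacobian of $\phi_z$ on a Bergman ball; everything else is the standard homogeneity/transference argument afforded by the transitive automorphism group of $\Bn$.
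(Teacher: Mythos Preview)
Your proof is correct. The paper does not actually prove this proposition; it simply cites Zhu's book \cite{Zhu}, and your argument---sub-mean value inequality for the plurisubharmonic function $|f|^p$ at the origin, followed by transference via $\phi_z$ using \eqref{eqn:cov} and the lower bound on $\abs{\nk{z}{2}}^2$ over $D(0,r)$---is precisely the standard proof found there.
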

\noindent The following vector-valued analogue will be used: 
\begin{prop}\label{prop:vvsoh2.24}
Let \begin{math}\lambda\in \Bn\end{math}. There exists a constant 
\begin{math}C>0\end{math} such that
\begin{align*}
\sup_{z\in D(\lambda,r)}\norm{f(z)}_{p}^{p} \leq
\frac{C}{(1-|\lambda|^{2})^{n+1+\alpha}}
\int_{D(\lambda,2r)}\norm{f(w)}_{p}^{p}dv_{\alpha}(w).
\end{align*}
\end{prop}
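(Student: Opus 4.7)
The plan is to reduce to the scalar case componentwise. Write $f = (f_1, \ldots, f_d)$ where each $f_i \colon \Bn \to \C$ is holomorphic (by the definition of $\Cd$-valued holomorphy and the fact that $\Cd$ is finite dimensional). Then
\[
\norm{f(z)}_{p}^{p} = \sum_{i=1}^{d} \abs{f_{i}(z)}^{p},
\]
and each $f_i$ is a scalar holomorphic function to which Proposition \ref{prop:soh2.24} applies.

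Fix $z \in D(\lambda, r)$. First, I would apply Proposition \ref{prop:soh2.24} to $f_i$ at the point $z$ with radius $r$ to obtain
\[
\abs{f_{i}(z)}^{p} \leq \frac{C}{(1-\abs{z}^{2})^{n+1+\alpha}} \int_{D(z,r)} \abs{f_{i}(w)}^{p} dv_{\alpha}(w),
\]
with $C$ independent of $z$ and $i$. Summing over $i = 1, \ldots, d$ gives
\[
\norm{f(z)}_{p}^{p} \leq \frac{C}{(1-\abs{z}^{2})^{n+1+\alpha}} \int_{D(z,r)} \norm{f(w)}_{p}^{p} dv_{\alpha}(w).
\]

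Next, I would replace $z$ by $\lambda$ in both the denominator and the domain of integration. The triangle inequality for the $\beta$-metric yields $D(z,r) \subset D(\lambda, 2r)$ whenever $z \in D(\lambda, r)$, so since the integrand is nonnegative, the integral is bounded by the corresponding integral over $D(\lambda, 2r)$. For the denominator, Proposition \ref{prop:soh2.20} gives $1 - \abs{z}^{2} \simeq 1 - \abs{\lambda}^{2}$ with constants depending only on $r$. Combining these,
\[
\norm{f(z)}_{p}^{p} \leq \frac{C'}{(1-\abs{\lambda}^{2})^{n+1+\alpha}} \int_{D(\lambda, 2r)} \norm{f(w)}_{p}^{p} dv_{\alpha}(w),
\]
where $C'$ depends only on $r$, $\alpha$, $n$, and $d$. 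The right-hand side is independent of $z$, so taking the supremum over $z \in D(\lambda, r)$ proves the claim.

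There is no real obstacle here; the only point requiring a moment's care is that the constant from the scalar Proposition \ref{prop:soh2.24} be uniform in the component $i$, which it is since the scalar statement makes no reference to $f$ other than its holomorphy. The combinatorial factor $d$ from summing components is absorbed into $C'$, which is harmless because $d$ is fixed throughout the paper.
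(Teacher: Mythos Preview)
Your proof is correct and in fact slightly more direct than the paper's. The paper instead uses the duality representation $\norm{f(z)}_{p}=\sup_{\norm{e}_{q}=1}\abs{\ip{e}{f(z)}_{\Cd}}$, applies the scalar Proposition~\ref{prop:soh2.24} to the holomorphic function $w\mapsto\ip{e}{f(w)}_{\Cd}$, and then bounds $\abs{\ip{e}{f(w)}_{\Cd}}^{p}\leq\norm{f(w)}_{p}^{p}$ by H{\"o}lder on $\Cd$ before handling the center and domain of integration exactly as you do. Your componentwise argument avoids the duality detour entirely by exploiting the identity $\norm{f(z)}_{p}^{p}=\sum_{i}\abs{f_{i}(z)}^{p}$, which is better suited to the $p$-norm than the paper's approach.

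One minor remark: there is in fact no combinatorial factor of $d$. Summing the $d$ scalar inequalities
\[
\abs{f_{i}(z)}^{p}\leq \frac{C}{(1-\abs{z}^{2})^{n+1+\alpha}}\int_{D(z,r)}\abs{f_{i}(w)}^{p}\,dv_{\alpha}(w)
\]
yields the vector inequality with the \emph{same} constant $C$, since both sides are additive in $i$. So $C'$ depends only on $r$, $\alpha$, and $n$.
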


\begin{proof}
First note that \begin{math}\sup_{z\in D(\lambda,r)}\|f(z)\|_{p}^{p} = 
\sup_{z\in D(\lambda,r)}\sup_{\|e\|_{q}=1}\abs{\ip{e}{f(z)}}^{p}\end{math}. 
By definition, \begin{math}\langle e,f(z)\rangle_{\Cd}\end{math} is holomorphic 
for all \begin{math}e\in \C^{d}\end{math}. By Proposition \ref{prop:soh2.24} and
Proposition \ref{prop:soh2.20}, for 
\begin{math}\|e\|_{\Cd}=1\end{math} and \begin{math}z\in D(\lambda,r)\end{math} 
there holds:
\begin{align*}
|\langle e,f(z) \rangle_{\Cd} |^{p} &\leq \frac{C}{(1-|z|^{p})^{n+1+\alpha}}
\int_{D(z,r)}|
\langle e,f(w)\rangle_{\Cd}|^{p}dv_{\alpha}(w)
\\& \leq \frac{C}{(1-|z|^{2})^{n+1+\alpha}}\int_{D(z,r)}
   \|f(w)\|_{p}^{p}dv_{\alpha}(w)
\\& \simeq \frac{C}{(1-|\lambda|^{2})^{n+1+\alpha}}\int_{D(z,r)}
   \|f(w)\|_{p}^{p}dv_{\alpha}(w)
\\& \leq \frac{C}{(1-|\lambda|^{2})^{n+1+\alpha}}\int_{D(\lambda,2r)}
   \|f(w)\|_{p}^{p}dv_{\alpha}(w).
\end{align*}
Which completes the proof.
\end{proof}

The next lemma is in \cite{Zhu}:

\begin{lem}
\label{Growth}
For $z\in \Bn$, $s$ real and $t>-1$, let
\begin{align*}
F_{s,t}(z):=\int_{\Bn}\frac{(1-\abs{w}^2)^t}{\abs{1-\overline{w}z}^s}\,dv(w).
\end{align*}
Then $F_{s,t}$ is bounded if $s<n+1+t$ and grows as $(1-\abs{z}^2)^{n+1+t-s}$ 
when $\abs{z}\to 1$ if $s>n+1+t$.
\end{lem}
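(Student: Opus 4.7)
The plan is to reduce the multi-variable integral to a one-dimensional integral over the disk, and then invoke the classical disk asymptotic. Since $F_{s,t}(z)$ depends only on $|z|$ by unitary invariance of $dv$, I would first rotate so that $z=re_1$ with $r=|z|$, whence $\bar w z = r\bar w_1$ and the integrand depends only on $w_1$ and $|w|$. Writing $w=(w_1,w')\in\C\times\C^{n-1}$ and integrating $w'$ over the ball $(1-|w_1|^2)^{1/2}\mathbb{B}^{n-1}$ via the substitution $w' = (1-|w_1|^2)^{1/2}u$, the Jacobian contributes $(1-|w_1|^2)^{n-1}$ and an extra $(1-|w_1|^2)^{t}$ comes out of $(1-|w|^2)^t = (1-|w_1|^2-|w'|^2)^t$. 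This yields the reduction
\begin{align*}
F_{s,t}(re_1) \;=\; C_{n,t}\,\int_{\mathbb{D}} \frac{(1-|\zeta|^2)^{n-1+t}}{|1-r\bar\zeta|^{s}}\,dA(\zeta),
\end{align*}
so the problem becomes the one-variable analogue with exponent $n-1+t$ in place of $t$.

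Next, in polar coordinates $\zeta=\rho e^{i\theta}$, I would separate the angular and radial integrals. The angular integral
\begin{align*}
I_s(x) \;:=\; \int_0^{2\pi}\frac{d\theta}{|1-xe^{i\theta}|^{s}}
\end{align*}
admits the standard asymptotic, derivable via the binomial expansion of $(1-xe^{i\theta})^{-s/2}\overline{(1-xe^{i\theta})^{-s/2}}$ together with Parseval and Stirling: $I_s(x)$ is bounded for $s<1$, grows like $\log\frac{1}{1-x}$ at $s=1$, and satisfies $I_s(x)\asymp(1-x)^{1-s}$ for $s>1$ as $x\to1^-$. Substituting back, the remaining task is to analyze
\begin{align*}
J(r) \;=\; \int_0^1 (1-\rho^2)^{n-1+t}\,\rho\,I_s(r\rho)\,d\rho.
\end{align*}

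For the bounded case $s<n+1+t$, I would just use the dominating singular factor $(1-r\rho)^{1-s}$ (or the logarithmic, or constant, regime) and note that $\int_0^1 (1-\rho)^{n-1+t}(1-r\rho)^{1-s}d\rho$ is uniformly bounded in $r$ since $n-1+t + 1-s > -1$; the case $s \le 1$ is trivially bounded. For the growth case $s>n+1+t$, the singularity is non-integrable in the limit, and the standard Tauberian-type computation — write $1-r\rho = (1-r) + r(1-\rho)$ and change variables $\rho = 1-(1-r)u$ — extracts the power $(1-r)^{n+t+1-s}$, which is comparable to $(1-|z|^2)^{n+1+t-s}$ since $1-r\asymp 1-r^2=1-|z|^2$. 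The main obstacle is precisely this last step: one needs matching upper and lower bounds in the Tauberian regime, which requires keeping track of constants carefully when splitting the $\rho$-integral into a piece near $\rho=1$ (controlling the asymptotic rate) and a piece away from $\rho=1$ (which contributes a harmless bounded term).
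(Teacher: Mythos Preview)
The paper does not give a proof of this lemma; it simply cites Zhu's book, where the result appears as a standard fact. Your sketch is correct and is essentially the classical argument found there: rotate so that $z=re_1$, slice the ball integration to reduce to a weighted disk integral with exponent $n-1+t$, expand the angular integral via the binomial/Parseval trick to get the $(1-x)^{1-s}$ asymptotic, and then analyze the remaining radial integral by cases. Nothing is missing; the only caveat is that the slicing step tacitly assumes $n\geq 2$ (for $n=1$ the reduction is vacuous and one is already in the disk case), and the ``Tauberian'' change of variables you describe for the growth case is exactly how the matching lower bound is obtained in the standard reference.
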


We now give several geometric decompositions of the ball. See \cite{Zhu} for
the proofs.

\begin{lem}
\label{StandardGeo}
Given $\varrho>0$, there is a family of Borel sets $D_m\subset\Bn$ and points 
$\{w_m\}_{m=1}^{\infty}$ such that
\begin{description}
\item[(i)] $D\left(w_m,\frac{\varrho}{4}\right)\subset 
D_m\subset D\left(w_m,\varrho\right)$ for all $m$;
\item[(ii)] $D_k\cap D_l=\emptyset$ if $k\neq l$;
\item[(iii)] $\bigcup_{m=1}^{\infty} D_m=\Bn$.
\end{description}
\end{lem}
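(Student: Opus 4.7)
The plan is to execute a standard greedy / Vitali-style covering argument in the Bergman metric $\beta$, which makes sense because $\beta$ is a genuine metric invariant under the automorphisms $\phi_z$ and because $(\Bn,\beta)$ is separable.

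First I would choose the centers $\{w_m\}$ by extracting a \emph{maximal} $\varrho/2$-separated sequence, i.e.\ a set $\{w_m\}_{m=1}^{\infty}\subset \Bn$ with $\beta(w_k,w_l)\geq \varrho/2$ for $k\neq l$ and such that no point can be added without violating this condition. Existence is guaranteed by Zorn's lemma, and countability follows from separability of $(\Bn,\beta)$. Two immediate consequences, both by the triangle inequality, are (a) the balls $D(w_m,\varrho/4)$ are pairwise disjoint, since if $z\in D(w_k,\varrho/4)\cap D(w_l,\varrho/4)$ then $\beta(w_k,w_l)<\varrho/2$; and (b) the balls $D(w_m,\varrho/2)$ cover $\Bn$, since if $z$ lay outside every such ball, $\{w_m\}\cup\{z\}$ would still be $\varrho/2$-separated, contradicting maximality. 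In particular the enlarged balls $D(w_m,\varrho)$ also cover $\Bn$.

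Next I would build the $D_m$ greedily. Set
\begin{align*}
R := \Bn\setminus \bigcup_{m=1}^{\infty} D(w_m,\varrho/4),
\end{align*}
and for each $z\in R$ let $m(z)$ be the smallest index $m$ with $z\in D(w_m,\varrho)$ (this is well defined by the covering property from step~1). Define
\begin{align*}
F_m := \{z\in R : m(z)=m\}, \qquad D_m := D(w_m,\varrho/4)\cup F_m.
\end{align*}
Then property (i) is automatic: $D(w_m,\varrho/4)\subset D_m$ by construction, and $D_m\subset D(w_m,\varrho)$ because $F_m\subset D(w_m,\varrho)$. Property (ii) follows because the sets $D(w_m,\varrho/4)$ are pairwise disjoint (step~1(a)), the sets $F_m$ are disjoint by the rule "smallest index wins," and $F_m$ is disjoint from every $D(w_k,\varrho/4)$ by the definition of $R$. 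Property (iii) is clear since $\bigcup_m D(w_m,\varrho/4)$ together with $R=\bigsqcup_m F_m$ exhausts $\Bn$.

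The only mildly nontrivial point is measurability: I would observe that the $\beta$-balls $D(w_m,\varrho)$ and $D(w_m,\varrho/4)$ are open (the metric $\beta$ induces the Euclidean topology on $\Bn$), hence Borel, and the $F_m$ are obtained from these by countably many unions, intersections, and complements, hence are Borel as well. This is the step I would expect to be least interesting but most important to write carefully; everything else is formal manipulation once the maximal separated set has been produced. No extension or adaptation to the vector-valued setting is needed, since the statement is purely geometric.
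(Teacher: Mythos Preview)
Your proof is correct. The paper does not actually supply a proof of this lemma; the sentence immediately preceding it reads ``We now give several geometric decompositions of the ball. See \cite{Zhu} for the proofs,'' so the result is simply quoted from Zhu's book. Your argument---extract a maximal $\varrho/2$-separated sequence in $(\Bn,\beta)$ and then disjointify greedily via the ``smallest index wins'' rule---is exactly the standard construction one finds in that reference (and in the literature generally), so there is no meaningful difference in approach to report.
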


\begin{prop}\label{prop:soh2.23}
There exists a positive integer \begin{math}N\end{math} such that for any 
\begin{math}0<r\leq 1\end{math} we can 
find a sequence \begin{math}\{a_{k}\}_{k=1}^{\infty}
\text{ in }\Bn\end{math} with the 
following properties:
\begin{description}
    \item[(i)]$\Bn=\cup_{k=1}^{\infty}D(a_{k},r)$
    \item[(ii)]The sets $D(a_{k},\frac{r}{4})$ are mutually disjoint.
    \item[(iii)] Each point $z\in B_{n}$ belongs to at most $N$ 
    of the sets $D(a_{k},4r)$.
\end{description}
\end{prop}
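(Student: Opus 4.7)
The plan is to construct $\{a_k\}$ greedily as a maximal $r/2$-separated set in the Bergman metric $\beta$. Using Zorn's lemma (or a straightforward inductive construction exhausting a countable dense subset of $\Bn$), I would pick a sequence $\{a_k\}_{k=1}^{\infty}$ such that $\beta(a_k,a_l)\geq r/2$ whenever $k\neq l$, and such that no further point of $\Bn$ can be added while preserving this separation. With the separation in hand, property (ii) is immediate from the triangle inequality: if some $w$ lay in $D(a_k,r/4)\cap D(a_l,r/4)$ with $k\neq l$, then $\beta(a_k,a_l)<r/2$, contradicting the construction. Property (i) is also immediate from maximality, since for any $z\in\Bn$ there must exist some $a_k$ with $\beta(z,a_k)<r/2$, and in particular $z\in D(a_k,r)$.

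The substantive step is (iii). Suppose $z$ lies in $D(a_{k_j},4r)$ for $M$ distinct indices $k_1,\dots,k_M$. The triangle inequality then places each $D(a_{k_j},r/4)$ inside $D(z,4r+r/4)$, and these $M$ sets are pairwise disjoint by (ii). I would now measure both sides against the M\"obius-invariant measure $d\tau(w)=dV(w)/(1-|w|^2)^{n+1}$, for which $\tau(D(w,s))$ depends only on $s$ (not on the center $w$), since the automorphisms $\phi_w$ act isometrically in $\beta$ and preserve $\tau$. Summing gives
\begin{equation*}
M\cdot \tau\bigl(D(\,\cdot\,,r/4)\bigr)\;=\;\sum_{j=1}^{M}\tau\bigl(D(a_{k_j},r/4)\bigr)\;\leq\;\tau\bigl(D(z,17r/4)\bigr),
\end{equation*}
so $M\leq \tau(D(\,\cdot\,,17r/4))/\tau(D(\,\cdot\,,r/4))$.

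The remaining point, and the main potential obstacle, is that $N$ must be \emph{independent} of $r\in(0,1]$. For this, one uses that in the Bergman metric the invariant volume of a ball of small radius $s$ grows polynomially like $s^{2n}$ (one can verify this by pulling back via $\phi_z$ to the origin, where $\beta$ and the Euclidean metric are comparable on bounded sets and $d\tau$ is comparable to $dV$). Consequently the ratio $\tau(D(\,\cdot\,,17r/4))/\tau(D(\,\cdot\,,r/4))$ is bounded by a constant $N=N(n)$ uniformly for $0<r\leq 1$, which yields (iii). Once this ratio estimate is in place, the rest is bookkeeping via the triangle inequality, so the entire argument rests on the invariance of $\tau$ together with the uniform volume-doubling for small Bergman balls.
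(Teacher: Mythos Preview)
Your argument is correct and is essentially the standard proof of this covering lemma: the paper does not give its own proof here but simply refers the reader to Zhu's book \cite{Zhu}, and the argument there proceeds exactly as you outline---pick a maximal $r/2$-separated set in the Bergman metric, deduce (i) and (ii) from maximality and the triangle inequality, and obtain (iii) by comparing invariant volumes of the disjoint small balls $D(a_{k_j},r/4)$ against the containing ball $D(z,4r+r/4)$. Your handling of the uniformity in $r$ via the small-radius asymptotic $\tau(D(\cdot,s))\sim c\,s^{2n}$ is the right way to close the estimate.
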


The following lemma appears in \cite{Sua}.
\begin{lem}\label{lem:mswLem2.6}
Let $\sigma>0$ and $k$ be a non-negative integer.  For each $0\leq i\leq k$ the
family of sets $\mathcal{F}_{i}=\{F_{i,j}: j\geq 1\}$  forms a covering of 
$\Bn$ such that
\begin{description}
\item[(i)] $F_{0,j_1}\cap F_{0,j_2}=\emptyset$ if $j_1\neq j_2$;
\item[(ii)] $F_{0,j}\subset F_{1,j}\subset\cdots\subset F_{k,j}$ for all $j$;
\item[(iii)] $\beta(F_{i,j}, F_{i+1,j}^c)\geq\sigma$ for all $0\leq i\leq k$ and
             $j\geq 1$;
\item[(iv)] every point of $\Bn$ belongs to no more than $N$ elements of 
             $\mathcal{F}_{i}$;
\item[(v)] $\diam_{\beta} \,F_{i,j}\leq C(k,\sigma)$ for all $i,j$.
\end{description} 
\end{lem}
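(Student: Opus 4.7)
The plan is to construct the families $\mathcal{F}_i$ by successively enlarging a disjoint base covering in the Bergman metric $\beta$. First I would fix a small radius $\varrho>0$ and invoke Lemma~\ref{StandardGeo} to produce disjoint Borel sets $\{D_j\}$ and points $\{w_j\}$ satisfying $D(w_j,\varrho/4)\subset D_j\subset D(w_j,\varrho)$ and $\bigcup_j D_j=\Bn$. Setting $F_{0,j}:=D_j$ immediately secures property (i) and makes $\mathcal{F}_0$ a covering.

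Next, for $1\leq i\leq k$ I would define the closed $i\sigma$-neighborhood
\[
F_{i,j}:=\{z\in\Bn : \beta(z,D_j)\leq i\sigma\}.
\]
The nesting in (ii) is built into the definition, and each $\mathcal{F}_i$ is a covering since $F_{0,j}\subset F_{i,j}$. Property (iii) should then follow from a one-line triangle inequality: for $z\in F_{i,j}$ and $w\in F_{i+1,j}^c$ one has
\[
\beta(z,w)\geq \beta(w,D_j)-\beta(z,D_j)\geq (i+1)\sigma-i\sigma=\sigma.
\]
Property (v) follows from $\diam_\beta F_{i,j}\leq \diam_\beta D_j+2i\sigma\leq 2\varrho+2k\sigma=:C(k,\sigma)$.

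The hard part will be (iv), the uniform multiplicity bound. If $z\in F_{i,j}$, then $D_j\subset D(w_j,\varrho)$ combined with $\beta(z,D_j)\leq k\sigma$ gives $w_j\in D(z,k\sigma+\varrho)$, so it suffices to bound the number of centers $w_j$ in any fixed-radius $\beta$-ball around $z$ by a constant independent of $z$ and $i$. I would do this by a standard packing argument: the smaller balls $D(w_j,\varrho/4)$ are pairwise disjoint (they sit inside the disjoint $D_j$), each has $dv_\alpha$-measure comparable to $(1-|w_j|^2)^{n+1+\alpha}$, and by Proposition~\ref{prop:soh2.20} all such $w_j$ satisfy $1-|w_j|^2\simeq 1-|z|^2$. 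Summing these volumes and comparing to the $dv_\alpha$-measure of the slightly enlarged ball $D(z,k\sigma+\varrho+\varrho/4)$ then produces a uniform bound $N=N(n,\alpha,\sigma,k,\varrho)$, completing the verification of (iv) and the lemma.
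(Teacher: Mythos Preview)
The paper does not prove this lemma; it merely cites Su\'arez \cite{Sua}. Your construction---taking the disjoint Borel decomposition from Lemma~\ref{StandardGeo} as the base layer and then defining $F_{i,j}$ as the closed $i\sigma$-neighbourhood of $D_j$ in the Bergman metric---is exactly the standard one used in the literature, and the verification of (i)--(v) that you outline is correct. Two small remarks: the dependence on $\alpha$ in your bound $N=N(n,\alpha,\sigma,k,\varrho)$ is spurious, since the packing argument can be run with the unweighted volume $dV$ (the Bergman metric does not see $\alpha$); and the statement as written requires $F_{k+1,j}$ in item~(iii), so you should extend your definition one more level to $i=k+1$, which changes nothing in the argument.
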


\subsection{Matrix-Valued Measures and Their $L^{P}$ Spaces}
We will be concerned with matrix--valued measures, \begin{math}\mu\end{math}. 
Loosely speaking, a
matrix--valued measure is a matrix--valued function on a $\sigma$-algebra 
such that every entry of the matrix
is a complex measure. More precisely, a matrix--valued measured is a matrix 
valued--function, $\mu$, on a $\sigma$-algebra such that \begin{math}
\mu(\emptyset)=0
\end{math} and that satisfies countable additivity. 

The matrix--valued analogue of non--negative measures are measures such 
that \begin{math}\mu(E)\end{math} is a positive semi-definite (PSD) matrix for 
every Borel subset of $\Bn$. For 
every matrix--valued measure, $\mu$, we associate to the matrix its trace 
measure \begin{math}\tau_{\mu}:=\sum_{i=1}^{d}\mu_{(i,i)}\end{math}. Since the 
trace of a matrix is the sum of its eigenvalues, and since a PSD matrix has no 
negative eigenvalues, $\tau_{\mu}$ is a non--negative scalar--valued measure
when $\mu$ is a PSD matrix--valued measure. Also, if the trace of a PSD matrix
is zero, the matrix is the zero matrix. This implies that  
\begin{math}\mu_{(i,j)}\ll\tau_{\mu}\end{math} and so the 
Lebesgue-Radon-Nikodym derivative, \begin{math}\frac{d\mu_{(i,j)}}
{d\tau_{\mu}}\end{math} is well defined $\tau_{\mu}$-a.e.. Let $M_{\mu}(z)$ 
denote the matrix whose $(i,j)$ entry is \begin{math}\frac{d\mu_{(i,j)}(z)}
{d\tau_{\mu}}\end{math}. The following decomposition of the PSD matrix-valued 
measure $\mu$ holds $\tau_{\mu}$-a.e.:
\begin{align*}
 d\mu(z)=M_{\mu}(z)d\tau_{\mu}(z).
\end{align*}

If \begin{math}
A\end{math} is a PSD matrix, and \begin{math}p\geq 1\end{math}, we can define a 
\begin{math}p^{\text{th}}\end{math}-power of 
\begin{math}A\end{math} by the following: We have that \begin{math}A=U^{*}
\Lambda U\end{math} where \begin{math}U\text{ is unitary and }\Lambda\end{math}
is diagonal with the eigenvalues of \begin{math}A\end{math} on the diagonal. 
Then we define \begin{math}A^{p}=U^{*}\Lambda^{p}U\end{math}. Using this 
definition, every PSD matrix \begin{math}A\end{math} has a unique PSD 
\begin{math}p^{\text{th}}\end{math}-root \begin{math}B\end{math} given by the 
folrmula: \begin{math}B=U^{*}\Lambda^{1/p}U\end{math}.
\noindent Consider the following preliminary definition:
\begin{Def}
Let \begin{math}L^{p}_{*}(\Bn,\C^{d};\mu)\end{math} be the
set of all \begin{math}\C^{d}\end{math}-valued functions that satisfy:
\begin{align*}
\|f\|_{L_{*}^{p}(\Bn,\C^{d};\mu)}^{p}:=\int_{\Bn}
\|M_{\mu}^{1/p}(w)f(w)\|_{p}^{p}d\tau_\mu(w) < \infty.
\end{align*}
\end{Def}
That \begin{math}\norm{f}_{L^{p}(\Bn,\C^{d};\mu)}\end{math} is a 
seminorm is an easy consequence of the fact that \begin{math}\|\cdot\|_{p}
\end{math} is a norm. However, it is not a norm because if \begin{math}
f(z)\in \ker M(z)\end{math} \begin{math}\tau_\mu\end{math}-a.e. then 
\begin{math}\|f\|_{L^{p}(B_{n},\C^{d};\mu)}=0\end{math}. We therefore 
define the following equivalence relation: \begin{math}f \sim_{M} g\end{math} 
if and only if \begin{math}M(z)f(z)=M(z)g(z)\end{math}  
\begin{math}\tau_\mu\end{math}-a.e. And we define \begin{math}
L^{p}(\Bn,\C^{d};\mu) = L^{p}_{*}(\Bn;\C^{d},\mu) / \sim_{M}\end{math}. We
similarly define $A^{p}(\Bn;\Cd,\mu)$ to be the set of holomorphic functions
that are also in $L^{p}(\Bn,\C^{d};\mu)$.

In the special case \begin{math}p=2\end{math},
\begin{math}L^{2}(\Bn,\C^{d};\mu)\end{math} is a Hilbert Space with inner 
product:
\begin{align*}
\Langle f,g\Rangle_{L^{2}(\Bn;\C^{d},\mu)}&=\int_{\Bn}
\Langle M_\mu(z)f(z),g(z)\Rangle_{\C^{d}}d\tau_\mu(z)
\\&= \int_{\Bn}\Langle d\mu(z)f(z),g(z)\Rangle_{\C^{d}}
\end{align*}

There is also the expected H{\"o}lder inequality:  
\begin{prop}\label{prop:holder}
Let \begin{math}\mu\end{math} be a PSD matrix measure on \begin{math}\Bn
\end{math}, $1<p<\infty$ and $q$ conjugate exponent to $p$. Then:
\begin{align*}
\abs{\ip{f}{g}_{L^{2}(\Bn,\C^{d};\mu)}} \leq 
\norm{f}_{L^{p}(\Bn,\C^{d};\mu)}
\norm{g}_{L^{q}(\Bn,\C^{d};\mu)}.
\end{align*}
\end{prop}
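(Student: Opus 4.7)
The strategy is to reduce the matrix-valued Hölder inequality to two successive applications of a standard Hölder inequality, one pointwise in $\Cd$ and one scalar against $d\tau_\mu$, after factoring $M_\mu(z)$ using functional calculus.

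First I would use the fact that $M_\mu(z)$ is PSD $\tau_\mu$-a.e., so that via the spectral definition of fractional powers introduced just before the proposition we have $M_\mu(z)^{1/p}$ and $M_\mu(z)^{1/q}$ PSD and commuting with each other (they are polynomials in the spectral projections of $M_\mu(z)$), with $M_\mu(z)^{1/p}M_\mu(z)^{1/q}=M_\mu(z)^{1/p+1/q}=M_\mu(z)$. Since $M_\mu(z)^{1/q}$ is self-adjoint, for each $z$
\begin{align*}
\ip{M_\mu(z)f(z)}{g(z)}_{\Cd}
=\ip{M_\mu(z)^{1/p}f(z)}{M_\mu(z)^{1/q}g(z)}_{\Cd}.
\end{align*}

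Next I would apply, pointwise in $z$, the finite-dimensional vector Hölder inequality in $\Cd$ with exponents $p$ and $q$, namely $|\ip{u}{v}_{\Cd}|\leq \norm{u}_p\norm{v}_q$ for $u,v\in\Cd$, to obtain
\begin{align*}
\abs{\ip{M_\mu(z)f(z)}{g(z)}_{\Cd}}
\leq \norm{M_\mu(z)^{1/p}f(z)}_{p}\,\norm{M_\mu(z)^{1/q}g(z)}_{q}
\end{align*}
for $\tau_\mu$-a.e.\ $z$. Integrating this bound against $d\tau_\mu(z)$ yields the required estimate on $|\ip{f}{g}_{L^2(\Bn,\Cd;\mu)}|$ once I invoke the scalar Hölder inequality on $(\Bn,\tau_\mu)$ with the same exponents $p,q$:
\begin{align*}
\int_{\Bn}\norm{M_\mu^{1/p}f}_p\,\norm{M_\mu^{1/q}g}_q\,d\tau_\mu
\leq \left(\int_{\Bn}\norm{M_\mu^{1/p}f}_p^p\,d\tau_\mu\right)^{1/p}
\left(\int_{\Bn}\norm{M_\mu^{1/q}g}_q^q\,d\tau_\mu\right)^{1/q},
\end{align*}
which is precisely $\norm{f}_{L^p(\Bn,\Cd;\mu)}\norm{g}_{L^q(\Bn,\Cd;\mu)}$.

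The only point that requires any care, and what I would treat as the main technical step, is the identity $M_\mu=M_\mu^{1/p}M_\mu^{1/q}$ together with self-adjointness of $M_\mu^{1/q}$; this follows immediately from the spectral definition $A^p=U^*\Lambda^p U$ recalled before the proposition, since $U^*\Lambda^{1/p}U\cdot U^*\Lambda^{1/q}U=U^*\Lambda U$. Everything else is the classical two-step Hölder argument, and the quotient by $\sim_M$ causes no issue because the bound depends on $f$ and $g$ only through $M_\mu^{1/p}f$ and $M_\mu^{1/q}g$, which are invariant under the equivalence relation.
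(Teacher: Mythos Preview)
Your proposal is correct and follows essentially the same route as the paper: factor $M_\mu=M_\mu^{1/p}M_\mu^{1/q}$ via the spectral calculus, apply the pointwise inequality $|\ip{u}{v}_{\Cd}|\leq\norm{u}_p\norm{v}_q$, and then the scalar H\"older inequality against $d\tau_\mu$. The only cosmetic difference is that the paper writes $\norm{\cdot}_{\Cd}$ rather than $\norm{\cdot}_p$, $\norm{\cdot}_q$ at the pointwise step, and your added remarks on the spectral identity and the $\sim_M$ quotient are just making explicit what the paper leaves implicit.
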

\begin{proof}The proof is a simple computation that uses linear algebra and
the usual H{\"o}lder's inequality. Indeed, 
\begin{align*}
\abs{\Langle f,g\Rangle_{L^{2}(\Bn;\C^{d},\mu)}} &=
    \abs{\int_{\Bn}\langle M_{\mu}(w) f(w),g(w)\rangle d\tau_\mu(w)}
\\&\leq \int_{\Bn}|\langle M_{\mu}^{1/p}(w)f(w),M_{\mu}^{1/q}(w)g(w)
   \rangle_{\C^{d}}|d\tau_\mu(w)
\\&\leq \int_{\Bn} \|M_{\mu}^{1/p}(w)f(w)\|_{\C^{d}}
   \|M_{\mu}^{1/q}(w)g(w)\|_{\C^{d}}d\tau_\mu(w)
\\&\leq \left(\int_{\Bn}\|M_{\mu}^{1/p}(w)f(w)
   \|_{\C^{d}}^{p}d\tau_\mu(w)\right)^{1/p}
   \left(\int_{\Bn}\|M_{\mu}^{1/q}(w)g(w)\|_{\C^{d}}^{q}
   d\tau_\mu(w)\right)^{1/q}
\\&=\norm{f}_{L^{p}(\Bn,\C^{d};\mu)} \norm{g}_{L^{q}(\Bn,\C^{d};\mu)}.
\end{align*}
\end{proof}
\subsection{Matrix-Valued Carleson Measures}
We will need to have a concept of matrix-valued Carleson measures.
A PSD matrix--valued measure \begin{math}\mu\end{math} on \begin{math}
\Bn\end{math} is a \textit{Carleson matrix--valued measure} for \begin{math}\apa
\end{math} if there is a constant 
\begin{math}C_{p}\end{math}, independent of \begin{math}f\end{math}, such that
\begin{align}\label{eqn:carDef}
\left(\int_{\Bn}\norm{M_{\mu}^{1/p}(z)f(z)}_{p}^{p}d\tau_\mu(z)\right )^{1/p} 
\leq C_p
\left(\int_{\Bn}\norm{f(z)}_{p}^{p} dv_{\alpha}(z)\right)^{1/p}.
\end{align}
The best constant for which (\ref{eqn:carDef}) holds will be denoted by 
\begin{math}
\norm{\iota_{(p,d)}}\end{math}. In the case that \begin{math}p=2\end{math}, the 
preceding inequality can be 
written in the following manner: 
\begin{align*}\label{eqn:ccmDef}
\left(\int_{\Bn}\ip{d\mu(z)f(z)}{f(z)}_{\C^{d}}\right )^{1/2} 
&\leq C_2\left(\int_{\Bn}\ip{dv_{\alpha}(z)f(z)}{f(z)}_{\C^{d}} \right)^{1/2}
\\&=C_2\left(\int_{\Bn}\ip{f(z)}{f(z)}_{\C^{d}}dv_{\alpha}(z) \right)^{1/2}.
\end{align*}

We now to state and give a proof of a Carleson Embedding Theorem for 
matrix--valued measures. We start by 
defining a generalization of Toeplitz operators. For $\mu$ a matrix--valued 
measure, define:
\begin{align*}
 T_{\mu}f(z):=\int_{\Bn}\frac{d\mu(w)f(w)}{(1-\overline{w}z)^{n+1+\alpha}}.
\end{align*}

\begin{lem}[Carleson Embedding Theorem] \label{lem:car}
For a PSD matrix--valued measure, $\mu$, the following quantities are 
equivalent:
\begin{description}
\item[(i)] $\|\mu\|_{RKM}:=
    \sup_{e\in\C^{d}, \|e\|_{2}=1}\sup_{\lambda\in \Bn}
    \int_{\Bn}|k_{\lambda}^{(2,\alpha)}(z)|^{2}\ip{d\mu(z)e}{e}_{\C^{d}}$;
\item[(ii)] $\|\iota_{(p,d)}\|^{p}:=
    \inf\left\{C:\int_{\Bn}\|M_{\mu}^{1/p}(z)f(z)\|_{\C^{d}}^{p} \leq C 
    \norm{f}_{A_\alpha^{p}}^{p}\right\}$;
\item[(iii)] $\|\mu\|_{GEO}:=
    \sup_{\lambda\in \Bn}\int_{D(\lambda,r)}\|M_{\mu}(z)\|_{\Cd}d\tau_\mu(z) 
    (1-|\lambda|^{2})^{-(n+1+\alpha)}$;
\item[(iv)] $B=
    \sup_{\lambda_{k}\in \Bn}\int_{D(\lambda_{k},r)}\|M_{\mu}(z)\|_{\Cd}
    d\tau_\mu(z) 
    (1-|\lambda_{k}|^{2})^{-(n+1+\alpha)}$
    where $\{\lambda_{k}\}_{k=1}^{\infty}$ is the sequence from     Proposition 
    \ref{prop:soh2.23};
\item[(v)] $\|T_{\mu}\|_{\mathcal{L}(A_{\alpha}^{p})}$.
\end{description}
\end{lem}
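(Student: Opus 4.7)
My plan is to prove the equivalences by establishing the cycle $(iv)\Rightarrow(ii)\Rightarrow(iii)\Rightarrow(iv)$ together with the equivalence $(i)\Leftrightarrow(iii)$, then handling $(v)$ separately via duality. The recurring strategy is to reduce matrix-valued statements to the scalar Carleson embedding theorem by testing on, or summing over, the standard basis $\{e_i\}$ of $\Cd$, leveraging finite-dimensional norm equivalence. The geometric equivalence $(iii)\Leftrightarrow(iv)$ is routine: for the nontrivial direction, any $\lambda\in\Bn$ lies in some lattice ball $D(\lambda_k,r)$, so Proposition \ref{prop:soh2.20} gives $1-|\lambda|^2\simeq 1-|\lambda_k|^2$, while $D(\lambda,r)\subset D(\lambda_k,2r)$ is covered by a uniformly bounded number of lattice balls.

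For $(iv)\Rightarrow(ii)$ I would use the pointwise bound $\|M_\mu^{1/p}(z)f(z)\|_p^p\lesssim_d \|M_\mu(z)\|\,\|f(z)\|_p^p$ together with Proposition \ref{prop:vvsoh2.24} on each lattice ball, so that $\int_{D(\lambda_k,r)}\|M_\mu^{1/p}f\|_p^p\,d\tau_\mu\leq\sup_{D(\lambda_k,r)}\|f\|_p^p\int_{D(\lambda_k,r)}\|M_\mu\|\,d\tau_\mu\lesssim B\int_{D(\lambda_k,2r)}\|f\|_p^p\,dv_\alpha$, and the finite overlap in Proposition \ref{prop:soh2.23}(iii) closes the sum. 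For $(ii)\Rightarrow(iii)$ I test (ii) against $f=k_\lambda^{(p,\alpha)}e_i$ for each standard basis vector; since $\|k_\lambda^{(p,\alpha)}e_i\|_{A^p}\simeq 1$, this yields $\int|k_\lambda^{(p,\alpha)}|^p\|M_\mu^{1/p}e_i\|_p^p\,d\tau_\mu\lesssim \|\iota_{(p,d)}\|^p$, and summing over $i$ using the spectral comparison $\sum_i\|A^{1/p}e_i\|_p^p\simeq_{d,p}\|A\|$ (valid for any PSD matrix $A$) gives $\int|k_\lambda^{(p,\alpha)}|^p\|M_\mu\|\,d\tau_\mu\lesssim\|\iota_{(p,d)}\|^p$; restricting to $D(\lambda,r)$, where $|k_\lambda^{(p,\alpha)}(z)|^p\simeq(1-|\lambda|^2)^{-(n+1+\alpha)}$ by Proposition \ref{prop:soh2.20}, extracts (iii). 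The equivalence $(i)\Leftrightarrow(iii)$ then follows by the scalar Zhu-type argument applied basis-vector-wise: for $(i)\Rightarrow(iii)$ the lower bound $|k_\lambda^{(2,\alpha)}|^2\gtrsim(1-|\lambda|^2)^{-(n+1+\alpha)}$ on $D(\lambda,r)$, applied to each $e_i$, controls $\tr\mu(D(\lambda,r))$ and hence $\int_{D(\lambda,r)}\|M_\mu\|\,d\tau_\mu$; for $(iii)\Rightarrow(i)$, decompose $\Bn$ via Lemma \ref{StandardGeo}, bound $|k_\lambda^{(2,\alpha)}(z)|^2$ on each $D_m$ in terms of $|1-\overline{\lambda}w_m|$, estimate $\int_{D_m}\|M_\mu\|\,d\tau_\mu$ using (iii), and close the resulting sum by Lemma \ref{Growth}.

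For $(v)$, a Fubini/reproducing-kernel computation gives the key identity $\langle T_\mu f,g\rangle_{A^2_\alpha}=\int_\Bn\langle d\mu(w)f(w),g(w)\rangle_{\Cd}$; combined with the matrix H\"older inequality (Proposition \ref{prop:holder}) and $A^p$--$A^q$ duality, this produces $\|T_\mu\|_{\mathcal{L}(A^p_\alpha)}\lesssim\|\iota_{(p,d)}\|\|\iota_{(q,d)}\|$. Conversely, evaluating $\langle T_\mu(k_\lambda^{(p,\alpha)}e),k_\lambda^{(q,\alpha)}e\rangle_{A^2_\alpha}=\int|k_\lambda^{(2,\alpha)}(w)|^2\langle d\mu(w)e,e\rangle$ and using $\|k_\lambda^{(p,\alpha)}e\|_{A^p}\simeq\|k_\lambda^{(q,\alpha)}e\|_{A^q}\simeq 1$ bounds (i) by $\|T_\mu\|$. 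The main obstacle is the matrix-to-scalar bookkeeping: one must verify uniformly in $z$ that the pointwise quantities $\sum_i\|M_\mu^{1/p}(z)e_i\|_p^p$, $\tr M_\mu(z)$, and $\|M_\mu(z)\|$ are mutually comparable with constants depending only on $d,p$. This is a finite-dimensional spectral fact, but it is what allows the scalar Carleson machinery to transfer cleanly to the matrix-valued setting.
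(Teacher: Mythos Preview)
Your proposal is correct and follows essentially the same strategy as the paper: reduce the matrix-valued statements to the scalar Carleson theory via the finite-dimensional equivalence $\tr A\simeq\|A\|\simeq\lambda_1(A)$ for PSD matrices (the paper's Lemma~\ref{lem:equivTr}), use the submean-value inequality on lattice balls for $(iv)\Rightarrow(ii)$, and handle $(v)$ by the duality identity $\langle T_\mu f,g\rangle=\int\langle d\mu\,f,g\rangle$ together with kernel testing. The one notable shortcut is your direct argument for $(ii)\Rightarrow(iii)$: you test on $k_\lambda^{(p,\alpha)}e_i$ and localize, whereas the paper routes this implication through $(v)$ and $(i)$, using $\|\mu\|_{RKM}\lesssim\|T_\mu\|\lesssim\|\iota_{(p,d)}\|\|\iota_{(q,d)}\|$; your route avoids invoking the $q$-embedding at that stage and is slightly cleaner.
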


\begin{lem}\label{lem:equivTr}
Let \begin{math}A\in M^{d\times d}\end{math} be PSD, let 
\begin{math}\|\cdot\|\end{math} be any matrix norm 
(see Section 5.6 of \cite{HJ}), and let \begin{math}\lambda_{1}(A)\end{math} 
denote the largest
eigenvalue of $A$. Then there holds
\begin{math}\tr (A) \simeq \norm{A} \simeq \lambda_{1}(A)\end{math} with implied
constant depending only 
on \begin{math}d\end{math}. 
\end{lem}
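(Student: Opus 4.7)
The plan is to reduce everything to a comparison between $\lambda_1(A)$ and a single reference norm, and then exploit that any two matrix norms on the finite-dimensional space $M^{d\times d}$ are equivalent with equivalence constants depending only on $d$.

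First I would handle $\tr(A) \simeq \lambda_1(A)$ directly from the PSD hypothesis. Writing the eigenvalues of $A$ as $\lambda_1(A) \geq \lambda_2(A) \geq \cdots \geq \lambda_d(A) \geq 0$, the trace equals the sum of eigenvalues, so
\begin{equation*}
\lambda_1(A) \leq \tr(A) = \sum_{i=1}^{d} \lambda_i(A) \leq d\,\lambda_1(A),
\end{equation*}
which gives $\tr(A) \simeq \lambda_1(A)$ with constants depending only on $d$.

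Next I would pick a concrete reference matrix norm, say the spectral (operator-$2$) norm $\|\cdot\|_{\mathrm{op}}$, for which the identity $\|A\|_{\mathrm{op}} = \lambda_1(A)$ holds on the PSD cone (because a PSD matrix is unitarily diagonalizable with non-negative eigenvalues). For an arbitrary matrix norm $\|\cdot\|$ on $M^{d\times d}$, the theorem on equivalence of norms on a finite-dimensional vector space yields positive constants $c_1(d), c_2(d)$ with $c_1(d)\|A\|_{\mathrm{op}} \leq \|A\| \leq c_2(d)\|A\|_{\mathrm{op}}$ for all $A$. Combining with the previous identity gives $\|A\| \simeq \lambda_1(A)$ with constants depending only on $d$, and chaining with the trace bound completes the proof.

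There is really no hard step here; the only subtlety is making sure that the equivalence constants in the step $\|A\| \simeq \|A\|_{\mathrm{op}}$ depend only on $d$, which is exactly the content of equivalence of norms on the finite-dimensional space $M^{d\times d}$ (the dimension being $d^2$). Since the statement only asserts equivalence for PSD matrices, one could also obtain the comparison $\|A\| \lesssim \tr(A)$ more concretely by writing $A = \sum_i \lambda_i(A)\, v_i v_i^{\ast}$ in an orthonormal eigenbasis and bounding $\|A\| \leq \sum_i \lambda_i(A)\, \|v_i v_i^{\ast}\| \leq C(d)\tr(A)$, but invoking equivalence of norms is cleaner and suffices.
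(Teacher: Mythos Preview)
Your proof is correct and follows essentially the same strategy as the paper: both invoke equivalence of all norms on the finite-dimensional space $M^{d\times d}$ to reduce to a single reference norm, and both prove the trace--eigenvalue comparison via $\lambda_1(A)\le \tr(A)\le d\lambda_1(A)$. The only cosmetic difference is the choice of reference norm: the paper uses the Frobenius norm $\|A\|_F=\sqrt{\tr(A^*A)}$ and computes $\tr(A^*A)=\sum_i\lambda_i^2$ explicitly, whereas you use the spectral norm and the identity $\|A\|_{\mathrm{op}}=\lambda_1(A)$ on the PSD cone, which makes the link to $\lambda_1(A)$ immediate and arguably a bit cleaner.
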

\begin{proof}
Recall that all norms on \begin{math}M^{d\times d}\end{math} are equivalent
with constants depending only on 
\begin{math}d\end{math}. We therefore need only show that \begin{math}
\tr(A)\simeq \|A\|_{F}\end{math} where
\begin{math}\|A\|_{F} = \sqrt{\tr(A^*A)}\end{math} (i.e., it is the Frobenius 
Norm or Hilbert-Schmidt Norm). 
Let \begin{math}\{\lambda_{i}\}_{i=1}^{d}\end{math} be the eigenvalues of 
\begin{math}A\end{math} arranged 
in decreasing order and note that
\begin{math}A^*A=A^2\end{math}. Then
\begin{math}
\tr(A^*A) = \sum_{i=1}^{d}\lambda_{i}^{2} 
= (\lambda_{1}^{2}) \sum_{i=1}^{d}(\lambda_{i}/\lambda_{1})^{2}
\leq d\lambda_{1}^{2} 
\leq d\left(\sum_{i=1}^{d}\lambda_{i}\right)^{2} 
= d\tr(A)^{2}
\end{math}.
Also,  
\begin{math}
\tr(A)^{2} = \left(\sum_{i=1}^{n}\lambda_{i}\right)^{2}
\leq 2^{n-1}\sum_{i=1}^{n} \lambda_{i}^{2}
= 2^{n-1}\tr(A^*A)
\end{math}.
Finally, \begin{math}\lambda_{1}(A)\leq\tr(A)\leq d\lambda_{1}(A)\end{math}. 
\end{proof}

The following is used in the next two lemmas. 
\begin{lem}\label{lem:trequiv}
\begin{math}
\sup_{e\in\Cd,\|e\|_{2}=1}\sup_{\lambda \in \Bn}\int_{\Bn}
|k_{\lambda}^{(2,\alpha)}(z)|^{2}\langle 
d\mu(z)e,e\rangle \simeq
\sup_{\lambda\in \Bn}\int_{\Bn}|k_{\lambda}^{(2,\alpha)}(z)|^{2}
d\tau_{\mu}(z)\end{math}.
\end{lem}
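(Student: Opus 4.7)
The plan is to prove both inequalities $\lesssim$ by passing through the decomposition $d\mu(z)=M_\mu(z)\,d\tau_\mu(z)$. A key preliminary observation is that by the definition $\tau_\mu=\sum_{i=1}^d\mu_{(i,i)}$, the Radon--Nikodym derivatives $(M_\mu)_{(i,i)}=d\mu_{(i,i)}/d\tau_\mu$ satisfy $\sum_{i=1}^d(M_\mu)_{(i,i)}=1$ $\tau_\mu$-a.e., i.e.\ $\tr(M_\mu(z))=1$ for $\tau_\mu$-a.e.\ $z$.

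For the direction LHS $\lesssim$ RHS (in fact with constant $1$): since $M_\mu(z)$ is PSD, for any unit vector $e\in\Cd$ one has
\begin{align*}
\langle M_\mu(z)e,e\rangle_{\Cd}\leq \lambda_1(M_\mu(z))\leq \tr(M_\mu(z))=1\qquad \tau_\mu\text{-a.e.}
\end{align*}
Writing $\langle d\mu(z)e,e\rangle=\langle M_\mu(z)e,e\rangle\,d\tau_\mu(z)$ and integrating against the nonnegative weight $|k_\lambda^{(2,\alpha)}(z)|^2$ therefore yields
\begin{align*}
\int_{\Bn}|k_\lambda^{(2,\alpha)}(z)|^2\langle d\mu(z)e,e\rangle \leq \int_{\Bn}|k_\lambda^{(2,\alpha)}(z)|^2\,d\tau_\mu(z),
\end{align*}
and taking the sup over $\lambda$ and unit $e$ on the left gives LHS $\leq$ RHS.

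For the reverse direction RHS $\lesssim$ LHS (with constant $d$): specialize $e$ to each standard basis vector $e_i$, so that $\langle d\mu(z)e_i,e_i\rangle=d\mu_{(i,i)}(z)$, and sum in $i$. Since $\tau_\mu=\sum_{i=1}^d\mu_{(i,i)}$,
\begin{align*}
\int_{\Bn}|k_\lambda^{(2,\alpha)}(z)|^2\,d\tau_\mu(z)=\sum_{i=1}^d\int_{\Bn}|k_\lambda^{(2,\alpha)}(z)|^2\langle d\mu(z)e_i,e_i\rangle \leq d\sup_{\|e\|_2=1}\int_{\Bn}|k_\lambda^{(2,\alpha)}(z)|^2\langle d\mu(z)e,e\rangle.
\end{align*}
Taking the sup over $\lambda\in\Bn$ on both sides gives RHS $\leq d\cdot$ LHS.

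There is no serious obstacle; the only subtlety worth noting is the normalization $\tr(M_\mu)=1$, which reduces the problem to a pointwise comparison between $\langle M_\mu(z)e,e\rangle$ (for unit $e$) and $\tr(M_\mu(z))$, and this is just the elementary fact $\lambda_1(A)\leq\tr(A)\leq d\,\lambda_1(A)$ for PSD matrices (as already recorded in Lemma \ref{lem:equivTr}). Accordingly the implicit constants depend only on $d$.
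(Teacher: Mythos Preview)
Your proof is correct and follows essentially the same approach as the paper: both directions use the decomposition $d\mu=M_\mu\,d\tau_\mu$, the pointwise comparison between $\langle M_\mu e,e\rangle$ and $\tr(M_\mu)$ for PSD matrices, and the expansion $\tau_\mu=\sum_i\langle d\mu\,e_i,e_i\rangle$ for the reverse inequality. Your explicit observation that $\tr(M_\mu)=1$ $\tau_\mu$-a.e.\ is a nice streamlining that yields the sharp constant $1$ in the first direction, whereas the paper routes through $\|M_\mu\|\simeq\tr(M_\mu)$ via Lemma~\ref{lem:equivTr} and so only obtains $\lesssim$.
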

\begin{proof}
The proof is a simple calculation that uses Lemma \ref{lem:equivTr}. Indeed, 
\begin{align*}
\sup_{e\in\Cd,\norm{e}_{\Cd}=1}\sup_{\lambda \in \Bn}\int_{\Bn}
    \abs{k_{\lambda}^{(2,\alpha)}(z)}^{2}
    \ip{\d\mu(z)e}{e}_{\Cd} 
    &=\sup_{e\in\Cd,\norm{e}_{\Cd}=1}
    \sup_{\lambda \in \Bn}\int_{\Bn}
    \abs{k_{\lambda}^{(2,\alpha)}(z)}^{2}
    \ip{M_{\mu}(z)e}{e}_{\Cd}d\tau_{\mu}(z)
\\&\leq \sup_{\lambda \in \Bn}
    \int_{\Bn}\abs{k_{\lambda}^{(2,\alpha)}(z)}^{2} 
    \norm{M_{\mu}(z)}_{\Cd}d\tau_{\mu}(z) 
\\&\simeq \sup_{\lambda \in \Bn}
    \int_{\Bn}\abs{k_{\lambda}^{(2,\alpha)}(z)}^{2} 
    \sum_{i=1}^{d}\ip{M_{\mu}(z)e_{i}}{e_{i}}_{\Cd}d\tau_{\mu}(z)
\\&\simeq \sup_{\lambda \in \Bn}
    \int_{\Bn}\abs{k_{\lambda}^{(2,\alpha)}(z)}^{2} \sum_{i=1}^{d}
    \ip{d\mu(z)e_{i}}{e_{i}}_{\Cd}
\\&\simeq \sup_{\lambda \in \Bn}
    \int_{\Bn}\abs{k_{\lambda}^{(2,\alpha)}(z)}^{2} d\tau_{\mu}(z).
\end{align*}
This gives one of the required inequalities. For the next inequality there 
holds: 
\begin{align*}
\sup_{\lambda\in \Bn}\int_{\Bn}|k_{\lambda}^{(2,\alpha)}(z)|^{2} d\tau_{\mu}(z) 
&=\sup_{\lambda\in \Bn}\int_{\Bn}|k_{\lambda}^{(2,\alpha)}(z)|^{2} 
    \sum_{j=1}^{d}\langle d\mu(z) e_{j},e_{j} \rangle
\\&=\int_{\Bn}|k_{\lambda}^{(2,\alpha)}(z)|^{2} \sum_{j=1}^{d}
    \langle M_{\mu}(z) e_{j},e_{j} \rangle d\tau_\mu(z)
\\&=\sum_{j=1}^{d}\int_{\Bn}|k_{\lambda}^{(2,\alpha)}(z)|^{2} \langle d\mu(z) 
    e_{j},e_{j} \rangle
\\&\leq d\sup_{\|e\|_{2}=1}\sup_{\lambda \in \Bn}\int_{\Bn}
    |k_{\lambda}^{(2,\alpha)}(z)|^{2}\langle d\mu(z)e,e\rangle.
\end{align*}
This completes the proof. 
\end{proof}

\begin{rem}
 Note that the lemma was stated using the function 
 \begin{math}\abs{\nk{\lambda}{2}}\end{math} (because this is what will be
 needed), but it is true for any non--negative function. 
\end{rem}

We are now ready to prove Lemma \ref{lem:car}. (The proof is simply an 
appropriate adaptation of the proofs given in, for example, 
\cites{Zhu,MSW,zhu1}).
\begin{proof}\begin{math}\|\mu\|_{GEO}\simeq \|\mu\|_{RKM}\end{math}.\\
We will use Proposition \ref{lem:equivTr}, Proposition \ref{prop:soh2.20}, and 
Lemma \ref{lem:trequiv}. Then,
\begin{align*}
\sup_{\lambda \in \Bn} 
    \frac{\int_{D(\lambda,r)}\|M_{\mu}(z)\|d\tau_\mu(z)}{
    (1-|\lambda|^{2})^{n+1+\alpha}}
&= \sup_{\lambda \in \Bn}\int_{D(\lambda,r)}
    \frac{(1-|\lambda|^{2})^{n+1+\alpha}}
    {(1-|\lambda|^{2})^{2(n+1+\alpha)}}\|M_{\mu}(z)\|d\tau_\mu(z)
\\&\simeq \sup_{\lambda \in \Bn}\int_{D(\lambda,r)}
    \frac{(1-|\lambda|^{2})^{n+1+\alpha}}
    {\abs{1-\lambda\overline{z}}^{2(n+1+\alpha)}}
    \sum_{k=1}^{d}\Langle M_{\mu}(z)e_{k},e_{k}\Rangle_{\C^{d}}d\tau_\mu(z)
\\&= \sup_{\lambda \in \Bn}\int_{D(\lambda,r)}
    |k_{\lambda}^{(2,\alpha)}(z)|^{2}\tr(d\mu(z))
\\&\simeq \sup_{\|e\|_{2}=1}\sup_{\lambda \in \Bn}\int_{\Bn}
    |k_{\lambda}^{(2,\alpha)}(z)|^{2}\langle d\mu(z)e,e\rangle.
\end{align*}
\end{proof}

\begin{proof}\begin{math}\|T_\mu\|_{\mathcal{L}(A_{\alpha}^{p})}\lesssim \|\iota_{(p,d)}\|^{p}
\end{math}. Let $f,g\in H^{\infty}(\Bn;\Cd)$ ($H^{\infty}(\Bn;\Cd)$ is simply
the space of bounded holomorphic $\Cd$--valued functions on $\Bn$).
Then by Fubini's Theorem and 
H{\"o}lder's Inequality 
(Proposition \ref{prop:holder}):
\begin{align*}
\left |\Langle T_{\mu}f,g\Rangle_{A_{\alpha}^{2}}\right | 
&=\left|\int_{\Bn}\Langle\int_{\Bn}\frac{d\mu(w)f(w)}
    {(1-\overline{w}z)^{n+1+\alpha}},g(z)\Rangle_{\C^{d}}
    dv_{\alpha}(z)\right|
\\&=\abs{\int_{\Bn}\ip{d\mu(w)f(w)}
    {\int_{\Bn}\frac{g(z)}{(1-w\overline{z})^{n+1+\alpha}}dv_{\alpha}(z)}_{\Cd}}
\\&\leq \int_{\Bn}\left|\Langle M_{\mu}(w)f(w),g(w)\Rangle_{\C^{d}}\right|d\tau_\mu(w)
\\&\leq \|f\|_{L^{p}(\Bn;\C^{d},\mu)}\|g\|_{L^{q}(\Bn;\C^{d},\mu)}
\\&\leq \|\iota_{(p,d)}\|\|\iota_{(q,d)}
    \|\|g\|_{A_{\alpha}^{2}}\|f\|_{A_{\alpha}^{2}}.
\end{align*}
\end{proof}

\begin{proof}\begin{math}B \lesssim \|\mu\|_{GEO}\end{math}.
This is immediate from the definitions. 
\end{proof}

\begin{proof}\begin{math}\|\iota_{(p,d)}\|^{p} \lesssim  B\end{math}.
Let \begin{math}\{a_{k}\}_{k=1}^{\infty}\end{math} be the sequence from
Proposition 
\ref{prop:soh2.23}.
So, there holds 
\begin{align*}
\int_{D(\lambda_{k},r)}\|M_{\mu}(z)\|d\tau_\mu(z) 
(1-|\lambda_{k}|^{2})^{-(n+1+\alpha)}
\leq B
\end{align*} 
for all \begin{math}k\end{math}. 
Let \begin{math}f\end{math} be holomorphic and $D_k=D(\lambda_{k},2r)$.
\begin{align*}
\int_{\Bn}\norm{M_{\mu}^{1/p}(z)f(z)}_{p}^{p}
d\tau_{\mu}(z)
&\leq \sum_{k=1}^{\infty}\int_{D_k}
    \|f(z)\|_{p}^{p}\|M^{1/p}_{\mu}(z)
    \|^{p}d\tau_\mu(z)
\\&\simeq \sum_{k=1}^{\infty}\int_{D_k}\|f(z)\|_{p}^{p}
    \tr (M^{1/p}_{\mu}(z))^{p}d\tau_\mu(z)
\\&\simeq \sum_{k=1}^{\infty}\int_{D_k}\|f(z)\|_{p}^{p}
    \|M_{\mu}(z)\|d\tau_\mu(z)
\\&\leq \sum_{k=1}^{\infty}\sup_{z\in D_k}
    \|f(z)\|_{p}^{p}\int_{D_k}
    \|M_{\mu}(z)\|d\tau_\mu(z)
\\&\lesssim \sum_{k=1}^{\infty}\frac{C}{(1-|\lambda_{k}|^{2})^{n+1+\alpha}}
    \int_{D_k}\|f(w)\|_{p}^{p}dv_{\alpha}(w)
    \int_{D(\lambda_{k},r)}\|M_{\mu}(z)\|d\tau_\mu(z)
\\&=\sum_{k=1}^{\infty}C
    \int_{D_k}\|f(w)\|_{p}^{p}dv_{\alpha}(w)
    \int_{D_k}\frac{\|M_{\mu}(z)\|}
    {(1-|\lambda_{k}|^{2})^{(n+1+\alpha)}}d\tau_\mu(z)
\\&\leq\sum_{k=1}^{\infty}C B\int_{D_k}
    \|f(w)\|_{p}^{p} dv_{\alpha}(w)
\\&\leq CBN\|f\|_{A_{\alpha}^{p}}^{P}.
\end{align*}
Above we use the estimate from Proposition \ref{prop:vvsoh2.24} and the
last inequality is due to the fact that each \begin{math}z\in 
\Bn\end{math} belongs to at most 
\begin{math}N\end{math} of the sets \begin{math}D(\lambda_{k},2r)\end{math}.
\end{proof}

\begin{proof}\begin{math}\|\mu\|_{RKM} \lesssim 
\|T_{\mu}\|_{\mathcal{L}(A_{\alpha}^{p})}\end{math}.
Assume that \begin{math}T_{\mu}\in\mathcal{L}(A_{\alpha}^{p})\end{math}. Then

\begin{align*}
\Langle T_{\mu}(k_{\lambda}^{(p,\alpha)}e),k_{\lambda}^{(q,\alpha)}e
\Rangle_{A_{\alpha}^{2}}
&= \int_{\Bn} \Langle T_{\mu}(k_{\lambda}^{(p,\alpha)}e)(z),
k_{\lambda}^{(q,\alpha)}(z)e\Rangle_{\C^{d}}dv_{\alpha}(z)
\\&=\int_{\Bn}\int_{\Bn}\Langle
\frac{d\mu(w)(1-|\lambda|^{2})^{\frac{n+1+\alpha}{q}}}
{\lp(1-\overline{w}z)(1-\overline{\lambda}w)\rp^{n+1+\alpha}}e,
\frac{(1-|\lambda|^{2})^{\frac{n+1+\alpha}{p}}}
{(1-\overline{\lambda}z)^{n+1+\alpha}}e\Rangle_{\C^{d}}dv_{\alpha}(z)
\\&=\int_{\Bn}\int_{\Bn}\Langle
\frac{d\mu(w)(1-|\lambda|^{2})^{(n+1+\alpha)}}
{(1-\overline{\lambda}w)^{n+1+\alpha}}e,
\frac{K_{\lambda}^{\alpha}(z)}
{(1-\overline{z}w)^{n+1+\alpha}}e
\Rangle_{\C^{d}}dv_{\alpha}(z)
\\&=\int_{\Bn}\Langle\frac{d\mu(w)(1-|\lambda|^{2})^{n+1+\alpha}}
{(1-\overline{\lambda}w)^{n+1+\alpha}}e,
\frac{1}{(1-\overline{\lambda}w)^{n+1+\alpha}}e\Rangle_{\Cd}
\\&=\int_{\Bn}\Langle \frac{d\mu(w)(1-|\lambda|^{2})^{n+1+\alpha}}
{|1-\overline{\lambda}w|^{2(n+1+\alpha)}}e,e
\Rangle_{\C^{d}}
\\&=\int_{\Bn}|k_{\lambda}^{(2,\alpha)}(w)|^{2}\Langle 
d\mu(w)e,e\Rangle_{\C^{d}}.
\end{align*}

This computation implies:
\begin{align*}
\sup_{e\in\C^{d}, \|e\|_{2}=1}\sup_{\lambda\in \Bn} 
\int_{\Bn}|k_{\lambda}^{(2,\alpha)}(z)|^{2}\Langle d\mu(z)
e,e\Rangle_{\C^{d}} 
&=\sup_{e\in\C^{d}, \|e\|_{2}=1}\sup_{\lambda\in \Bn}
\Langle T_{\mu}(k_{\lambda}^{(p,\alpha)}e),
k_{\lambda}^{(q,\alpha)}e\Rangle_{A^{2}}
\\&\leq\sup_{\lambda\in \Bn} \|T_{\mu}\|_{A^{p}\to A^{p}}
\|k_{\lambda}^{(p,\alpha)}\|_{A_{\alpha}^{p}}
\|k_{\lambda}^{(q,\alpha)}\|_{A_{\alpha}^{q}}
\\&\simeq  \|T_{\mu}\|_{A^{p}\to A^{p}}.
\end{align*}
\end{proof}

\begin{proof}\begin{math}\|\mu\|_{RKM} \lesssim \|\iota_{(p,d)}\|^{p}\end{math}.
From the inequalities we have already proven, we have \begin{math} \|\mu\|_{RKM}
\lesssim \|T_{\mu}\|_{\mathcal{L}(A_{\alpha}^{p})} \lesssim 
\|\iota_{(p,d)}\|^{p}\end{math}.
\end{proof}

To state the following corollary, we first define the scalar 
total variation, denoted $\abs{\mu}$, of a matrix-valued measure, $\mu$. Let 
\begin{math}\abs{\mu}:=\sum_{i=1}^{d}
\sum_{j=1}^{d}\abs{\mu_{(i,j)}}\end{math},
where \begin{math}\abs{\mu_{(i,j)}}\end{math} is the total variation of the 
measure $\mu_{(i,j)}$. In the case that $\mu$ is a PSD matrix--valued
measure, there holds: \begin{math}d\abs{\mu}(z)= 
\sum_{i,j}d\abs{\mu_{i,j}}(z)=
\sum_{i,j}\abs{\frac{d\mu_{i,j}}{d\tau_{\mu}}(z)}d\tau_{\mu}(z)=
\sum_{i,j}\abs{(M_{\mu})_{i,j}(z)}d\tau_{\mu}(z)
\simeq\norm{M_{\mu}(z)}d\tau_{\mu}(z)\end{math}.

To emphasize, the total variation of a matrix--valued measure is a positive
\textit{scalar--valued} measure. This differs from the definition in, for 
example, \cite{RR} in which the total variation of a matrix--valued measure is
defined to be a PSD matrix--valued measure. But our definition is not 
with out precedent. For example, in 
\cite{DU}, the authors define the total variation of a 
\textit{vector--valued} measure to be a positive scalar--valued 
measure, though their definition is different from ours.
Even though our definition of the total variation
of a matrix--valued measure is different than the one appearing in,
for example \cite{RR} and \cite{DU}, this is nonetheless a reasonable 
definition: If
$\nu_1$ is a complex scalar measure and $\nu_2$ is a positive measure such
that $\nu_1 \ll \nu_2$, and if $d\nu_1=fd\nu_2$ then the total variation
of $\nu_1$ is defined by $d\abs{\nu_1}=\abs{f}d\nu_2$. So, in the case that 
$\mu$ is a PSD matrix--valued
measure, we are saying that $d\abs{\mu}=\norm{M_{\mu}}d\tau_{\mu}$.

\begin{cor} \label{cor:car1}
Let \begin{math}|\mu|\end{math} be the total variation of the PSD matrix--valued
measure \begin{math}\mu\end{math}.
The following quantities are equivalent. 
\begin{description}
\item[(i)] $\|\mu\|_{RKM}:=
    \sup_{e\in\C^{d}, \|e\|_{2}=1}\sup_{\lambda\in \Bn}
    \int_{\Bn}|k_{\lambda}^{(2,\alpha)}(z)|^{2}
    \Langle d\mu(z) e,e\Rangle_{\C^{d}}$;
\item[(ii)] $\|\iota_{(p,d)}\|^{p}:=
    \inf\left\{C:\int_{\Bn}\|M_{\mu}^{1/p}(z)f(z)\|_{p}^{p}d\tau_{\mu}(z) \leq 
    C\|f\|_{A_{\alpha}^{p}}^{p}\right\}$;
\item[(iii)] $\|\mu\|_{GEO}:=
    \sup_{\lambda\in \Bn}\int_{D(\lambda,r)}\|M_{\mu}(z)\|d\tau_\mu(z) 
    (1-|\lambda|^{2})^{-(n+1+\alpha)}$;
\item[(iv)] $\|T_{\mu}\|_{\mathcal{L}(A_{\alpha}^{p})}$;
\item[(v)] $\||\mu|\|_{RKM}=\sup_{\lambda\in \Bn},
    \int_{\Bn}|k_{\lambda}^{(2,\alpha)}(z)|^{2}d|\mu|(z)$;
\item[(vi)] $\|\kappa_{(p,d)}\|^{p}=\inf\left\{C:\int_{\Bn}
    \|f(z)\|_{p}^{p}d|\mu|(z) \leq C \|f\|_{A_{\alpha}^{p}}^p\right\}$;
\item[(vii)] $\||\mu |\|_{GEO}=\sup_{\lambda\in \Bn}
    \int_{D(\lambda,r)}d|\mu|(z) (1-|\lambda|^{2})^{-(n+1+\alpha)}$;
\item[(viii)] $\|T_{|\mu|}\|_{\mathcal{L}(A_{\alpha}^{p})}$.
\end{description}
\end{cor}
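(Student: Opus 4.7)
The plan is to leverage Lemma \ref{lem:car} together with the identity
$d\abs{\mu}(z)\simeq\norm{M_\mu(z)}\,d\tau_\mu(z)$
(established in the paragraph immediately preceding the statement of the corollary) to reduce everything to equivalences already in hand.

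First I would observe that items (i)--(iv) of the corollary are literally items (i), (ii), (iii), (v) of Lemma \ref{lem:car}, so their mutual equivalence is immediate (item (iv) of the lemma, the discrete $B$, also sits in this chain and could be inserted if convenient). No new work is required here.

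Next I would handle (v)--(viii). Since $\abs{\mu}$ is a (positive, scalar-valued) Borel measure on $\Bn$, I would apply Lemma \ref{lem:car} with $d=1$ to the measure $\abs{\mu}$: in that case $M_{|\mu|}\equiv 1$ and $\tau_{|\mu|}=|\mu|$, so the quantities (i), (ii), (iii), (v) of Lemma \ref{lem:car} specialize exactly to (v), (vi), (vii), (viii) of the corollary. Thus (v)--(viii) are equivalent among themselves.

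The remaining task is to bridge the two groups, and for this a single comparison suffices. Using the identity $d|\mu|(z)\simeq\norm{M_\mu(z)}\,d\tau_\mu(z)$, one has
\begin{align*}
\norm{\mu}_{GEO}
&= \sup_{\lambda\in\Bn}(1-\abs{\lambda}^2)^{-(n+1+\alpha)}\int_{D(\lambda,r)}\norm{M_\mu(z)}\,d\tau_\mu(z)\\
&\simeq \sup_{\lambda\in\Bn}(1-\abs{\lambda}^2)^{-(n+1+\alpha)}\int_{D(\lambda,r)}d\abs{\mu}(z)
= \norm{\abs{\mu}}_{GEO},
\end{align*}
which is (iii)$\simeq$(vii) and closes the circle. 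I would present these three steps in this order, so the corollary follows with no further calculation.

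I do not anticipate a real obstacle: the only substantive content is the identity $d|\mu|\simeq\|M_\mu\|\,d\tau_\mu$, which is already justified in the text, and after that the corollary is a bookkeeping exercise combining Lemma \ref{lem:car} (for the matrix-valued measure $\mu$) with Lemma \ref{lem:car} again (for the scalar measure $\abs{\mu}$, i.e.\ the $d=1$ specialization). If anything is delicate, it is making sure the constants implicit in $\simeq$ depend only on $d$, $n$, $\alpha$, $p$, and $r$, but this is already the case for all the comparisons used, so no new dependencies appear.
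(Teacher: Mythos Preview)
Your proposal is correct and essentially identical to the paper's own proof: both invoke Lemma~\ref{lem:car} for (i)--(iv), appeal to the scalar case for (v)--(viii) (you do this by specializing Lemma~\ref{lem:car} to $d=1$, the paper simply cites the known scalar result), and then bridge the two groups via (iii)$\,\simeq\,$(vii) using $d|\mu|\simeq\|M_\mu\|\,d\tau_\mu$. No differences of substance.
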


\begin{proof}
The equivalence between (i)--(iv) was proven in Lemma 
\ref{lem:car}, and the 
equivalence of (v)-(viii) is well--known
(see for example \cite{MSW} or \cite{zhu1}). 
To prove the current theorem, we only
need to ``connect'' the two sets of equivalencies. But this is easy since 
the quantities defined in (iii) and (vii) are equivalent.  
\end{proof}

\begin{cor}\label{cor:eleCar}
If $\mu$ is a Carleson matrix-valued measure or if $|\mu|$ is $\apa$-Carleson, 
then the variation of every entry of $\mu$ is Carleson. 
\end{cor}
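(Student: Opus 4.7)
The plan is to reduce everything to the \emph{scalar} Carleson property of $\abs{\mu}$ via Corollary \ref{cor:car1}, and then exploit the simple fact that $\abs{\mu_{(i,j)}}\leq\abs{\mu}$ as positive scalar measures.

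First, I would observe that by Corollary \ref{cor:car1} the two hypotheses of the statement are actually equivalent: the equivalence of conditions (iii) and (vii) in that corollary says exactly that $\mu$ being a PSD matrix--valued Carleson measure for $\apa$ is the same as $\abs{\mu}$ being a scalar $\apa$--Carleson measure. So in either case we may assume that $\abs{\mu}$ satisfies
\begin{equation*}
\int_{\Bn}\abs{f(z)}^{p}\,d\abs{\mu}(z)\leq C\,\norm{f}_{\apa}^{p}
\qquad\text{for all }f\in\apa(\Bn;\C).
\end{equation*}

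Next I would use the definition $\abs{\mu}:=\sum_{i,j}\abs{\mu_{(i,j)}}$, which is a sum of positive scalar measures. In particular, for each fixed pair $(i,j)$ one has the pointwise domination $\abs{\mu_{(i,j)}}\leq\abs{\mu}$ as Borel measures. Therefore, for any scalar holomorphic $f\in\apa(\Bn;\C)$,
\begin{equation*}
\int_{\Bn}\abs{f(z)}^{p}\,d\abs{\mu_{(i,j)}}(z)
\leq \int_{\Bn}\abs{f(z)}^{p}\,d\abs{\mu}(z)
\leq C\,\norm{f}_{\apa}^{p},
\end{equation*}
which is exactly the statement that $\abs{\mu_{(i,j)}}$ is an $\apa$--Carleson measure.

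There is no real obstacle here; the only thing to be careful about is making sure the reduction in the first step is correctly invoked, i.e.\ that the equivalence of (iii) and (vii) in Corollary \ref{cor:car1} holds for either starting hypothesis (which it does, since both (iii) and (vii) appear in the same list of equivalent quantities). The whole content of the corollary is that, once the matrix--valued Carleson condition has been translated into the scalar Carleson condition for $\abs{\mu}$, the monotonicity of Carleson measures with respect to domination by positive measures does the rest.
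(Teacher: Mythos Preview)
Your proposal is correct and follows essentially the same approach as the paper: both reduce to the scalar Carleson property of $\abs{\mu}$ via Corollary~\ref{cor:car1} and then use the domination $\abs{\mu_{(i,j)}}\leq\abs{\mu}$. The paper routes this domination through the Radon--Nikodym representation $d\abs{\mu_{(i,j)}}=\abs{(M_\mu)_{(i,j)}}\,d\tau_\mu$ and then sums to recover $\norm{M_\mu}\,d\tau_\mu\simeq d\abs{\mu}$, whereas you invoke the definition $\abs{\mu}=\sum_{i,j}\abs{\mu_{(i,j)}}$ directly; your version is slightly more streamlined but the content is the same.
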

\begin{proof}
There holds:
\begin{align*}
\int_{\Bn}\|f(z)\|_{p}^{p}d|\mu_{(i,j)}|(z) &= 
\int_{\Bn}\|f(z)\|_{p}^{p}|(M_{\mu})_{(i,j)}(z)|d\tau_\mu(z) 
\\&\leq\sum_{i=1}^{d}\sum_{j=1}^{d}
    \int_{\Bn}\|f(z)\|_p^{p}|M_{(i,j)}(z)|d\tau_\mu(z).
\\&\simeq \int_{\Bn}\|f(z)\|_{p}^{p}\|M_{\mu}(z)\|d\tau_\mu(z).
\end{align*} 
Using Corollary 
\ref{cor:car1}, 
\begin{align*}
\int_{\Bn}\|f(z)\|_{p}^{p}\|M_{\mu}(z)\|d\tau_\mu(z) &\simeq
\int_{\Bn}\norm{f(z)}_{p}^{p}d\abs{\mu}(z) 
\\&\leq \norm{T_{\abs{\mu}}}_{\mathcal{L}(A^p_{\alpha})}
\int_{\Bn}\norm{f(z)}_{p}^{p}dv_{\alpha}(z).
\end{align*} 
\end{proof}

\begin{lem}
\label{CM-Cor}
Let $1<p<\infty$ and suppose that $\mu$ is an $A^p_{\alpha}$ 
matrix--valued Carleson measure.  Let $F\subset \Bn$ be a 
compact set, then
$$
\norm{T_{\mu1_F}f}_{A^p_{\alpha}}\lesssim 
\norm{T_\mu}^{\frac{1}{q}}_{\mathcal{L}(A^p_\alpha)}
\norm{1_F f}_{L^p(\Bn,\Cd;\mu)},
$$
where $q=\frac{p}{p-1}$.
\end{lem}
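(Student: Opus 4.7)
The plan is to use the $A^p_\alpha$--$A^q_\alpha$ duality (with $q=p/(p-1)$) together with the reproducing property of the Bergman kernel to rewrite the $A^2_\alpha$--pairing of $T_{\mu 1_F}f$ with a test function as a matrix--valued $L^2(\mu)$ pairing. Matrix H\"older (Proposition \ref{prop:holder}) and the Carleson embedding (Lemma \ref{lem:car}) then combine to give the required estimate.

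Concretely, I would begin from the duality identity
$$
\norm{T_{\mu 1_F} f}_{A^p_\alpha} \simeq \sup_{\norm{g}_{A^q_\alpha}\leq 1} \bigl|\ip{T_{\mu 1_F} f}{g}_{A^2_\alpha}\bigr|.
$$
Expanding the definition of $T_{\mu 1_F}f$, swapping the order of integration by Fubini, and applying the reproducing formula $\int_{\Bn} g(z)(1-w\bar z)^{-(n+1+\alpha)}\dva(z) = g(w)$ (valid for $g\in A^q_\alpha$) to the inner integral, one obtains
$$
\ip{T_{\mu 1_F} f}{g}_{A^2_\alpha} = \int_{\Bn} 1_F(w)\ip{d\mu(w)f(w)}{g(w)}_{\Cd} = \ip{1_F f}{g}_{L^2(\Bn,\Cd;\mu)}.
$$
This is the same manipulation used in the proof of $\norm{T_\mu}_{\mathcal{L}(A^p_\alpha)} \lesssim \norm{\iota_{(p,d)}}\norm{\iota_{(q,d)}}$ inside Lemma \ref{lem:car}, now carrying the cutoff $1_F$ along.

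Proposition \ref{prop:holder} bounds the pairing by $\norm{1_F f}_{L^p(\Bn,\Cd;\mu)}\norm{g}_{L^q(\Bn,\Cd;\mu)}$, and the Carleson embedding of Lemma \ref{lem:car}, applied with exponent $q$, yields $\norm{g}_{L^q(\Bn,\Cd;\mu)} \leq \norm{\iota_{(q,d)}}\norm{g}_{A^q_\alpha}$. Since the geometric quantity $\norm{\mu}_{GEO}$ in Lemma \ref{lem:car} does not depend on the exponent, chaining the equivalences in that lemma for $q$ and for $p$ gives
$$
\norm{\iota_{(q,d)}}^{q} \simeq \norm{\mu}_{GEO} \simeq \norm{T_\mu}_{\mathcal{L}(A^p_\alpha)},
$$
so $\norm{\iota_{(q,d)}} \lesssim \norm{T_\mu}_{\mathcal{L}(A^p_\alpha)}^{1/q}$. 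Taking the supremum over $g$ in the unit ball of $A^q_\alpha$ finishes the proof.

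The only delicate point is bookkeeping the exponent $1/q$, which appears precisely because Lemma \ref{lem:car} relates $\norm{\iota_{(q,d)}}^{q}$ (not $\norm{\iota_{(q,d)}}^{p}$) to $\norm{T_\mu}_{\mathcal{L}(A^p_\alpha)}$; there is no genuine obstacle beyond this. The use of Fubini is justified since $1_F\mu$ is a finite matrix--valued measure with compact support.
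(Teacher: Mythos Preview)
Your argument is correct and follows essentially the same route as the paper: both rewrite $\ip{T_{\mu 1_F}f}{g}_{A^2_\alpha}$ as $\ip{1_F f}{g}_{L^2(\Bn,\Cd;\mu)}$ via Fubini and the reproducing property, then apply Proposition~\ref{prop:holder} and the Carleson embedding for the exponent $q$. Your explicit tracking of the exponent $1/q$ through the exponent--independent quantity $\norm{\mu}_{GEO}$ in Lemma~\ref{lem:car} is exactly the step the paper leaves implicit.
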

\begin{proof}
It is clear $T_{\mu1_F}f$ is a bounded analytic function for any 
$f\in A^p_{\alpha}$ since $F$ is compact 
and $\mu$ is a finite 
measure.  As in the proof of the previous lemma, there holds
\begin{align*}
\abs{\ip{T_{\mu 1_F} f}{g}_{A^2_{\alpha}}} &=  
\abs{\int_{\Bn} 
\ip{M_{\mu}(w)1_F(w)f(w)}{g(w)}_{\Cd}\,d\tau_{\mu}(w)}
\\&= \ip{1_{F}f}{g}_{L^{2}(\Bn,\Cd;\mu)}
\\&\leq \norm{1_{F}f}_{L^{p}(\Bn,\Cd;\mu)}\norm{g}_{L^{q}(\Bn,\Cd;\mu)}
\\&\lesssim \norm{T_\mu}^{\frac{1}{q}}_{\mathcal{L}(A^p_\alpha)}
\norm{1_{F}f}_{L^{p}(\Bn,\Cd;\mu)}\norm{g}_{A^q_\alpha}.
\end{align*}
Note that in the above we used Proposition \ref{prop:holder}.
\end{proof}

For a Carleson measure $\mu$ and $1<p<\infty$ and for $f\in L^p(\Bn,\Cd;\mu)$ we
also define
$$
P_\mu f(z):=\int_{\Bn}\frac{d\mu(w)f(w)}{(1-\overline{w}z)^{n+1+\alpha}}\,.
$$
It is easy to see based on the computations above that $P_\mu$ is a bounded 
operator from $L^p(\Bn,\Cd;\mu)$ to $A^p_{\alpha}$ and 
$T_\mu=P_{\mu}\circ \imath_p$.

\section{Approximation By Localized Compact Operators}

In this section, we will show that every operator in the Toeplitz algebra can be
approximated by sums of localized compact operators. Along with some other 
estimates, this will help us approximate the essential norm of operators in
the Toeplitz algebra. In particular, the goal of this section will be to prove 
the following Theorem:

\begin{thm}\label{thm:mainSec3}
Let \begin{math}S\in \mathcal{T}_{p,\alpha}\end{math}, 
$\mu$ be a $A_{p}^{\alpha}$ matrix--valued
Carleson measure and 
\begin{math}\epsilon > 0\end{math}. Then there are Borel sets 
\begin{math}F_{j}\subset 
G_{j} \subset \Bn\end{math}
such that 
\begin{description}
   \item[(i)]\begin{math}\Bn = \cup_{j=1}^{\infty} F_{j}\end{math};
   \item[(ii)]\begin{math}F_{j}\cap F_{k} = \emptyset \end{math} if $j\neq k$;
   \item[(iii)] each point of $\Bn$ lies in no more than $N=N(n)$ of the sets 
   $G_{j}$;
   \item[(iv)]\begin{math}\textnormal{diam}_{\beta}G_{j}\leq 
   \mathfrak{d}(p,S,\epsilon)\end{math} for all $j$, and
   \begin{align*}
   \left\|ST_{\mu}-
   \sum_{j=1}^{\infty}M_{1_{F_{j}}}S
   T_{\mu 1_{G_{j}}}\right\|_{\fl(A_{\alpha}^{p},L_{\alpha}^{p})}<\epsilon.
  \end{align*} 
\end{description}
\end{thm}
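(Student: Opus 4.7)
The plan is to first rewrite the target difference as an off-diagonal sum, then reduce by density to a finite combination of Toeplitz operators, and finally exploit the $\beta$-separation coming from Lemma~\ref{lem:mswLem2.6} through a kernel-decay estimate in the Bergman metric.

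Since $\{F_j\}$ will partition $\Bn$ by (i)--(ii), one has $ST_\mu=\sum_j M_{1_{F_j}}ST_\mu$ and hence
\[
ST_\mu-\sum_j M_{1_{F_j}}ST_{\mu 1_{G_j}}=\sum_j M_{1_{F_j}}S T_{\mu 1_{G_j^c}}.
\]
Using disjointness of the $F_j$ gives the identity $\bigl\|\sum_j M_{1_{F_j}}A_j f\bigr\|_{L^p_\alpha}^p=\sum_j\|1_{F_j}A_j f\|_{L^p_\alpha}^p$, which combined with bounded overlap of the $G_j$ and Corollary~\ref{cor:car1} shows that the map $A\mapsto \sum_j M_{1_{F_j}}AT_{\mu 1_{G_j}}$ has norm $\lesssim N^{1/p}\|A\|\,\|T_\mu\|_{\mathcal L(A^p_\alpha)}$. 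Approximating $S$ in operator norm by a finite sum $S_0$ of finite products of Toeplitz operators with $L^\infty_{M^d}$ symbols then reduces matters to proving the estimate for $S_0$ and $\epsilon/2$.

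The main technical step is a kernel-decay estimate for $S_0T_\mu$. Writing
\[
(S_0T_\mu f)(z)=\int_{\Bn}(S_0K_w^{(\alpha)})(z)\,d\mu(w)\,f(w),
\]
I would prove by induction on the number of Toeplitz factors that for every $M>0$,
\[
\bigl\|(S_0K_w^{(\alpha)})(z)\bigr\|\;\lesssim_{S_0,M}\;\frac{1}{\abs{1-\overline z w}^{n+1+\alpha}}\cdot\frac{1}{(1+\beta(z,w))^M}.
\]
Each inductive step uses the Forelli--Rudin estimates of Lemma~\ref{Growth} together with $1-|\phi_z(w)|^2=(1-|z|^2)(1-|w|^2)/|1-\overline z w|^2$ to peel off an additional factor of decay in $\beta$ per Toeplitz factor; the matrix symbols contribute only a constant depending on their $L^\infty_{M^d}$-norms after handling each entry componentwise.

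Finally, apply Lemma~\ref{lem:mswLem2.6} with a fixed $k$ and $\sigma$ to be chosen, setting $F_j:=F_{0,j}$ and $G_j:=F_{k,j}$; property (v) supplies (iv) of the theorem, while (iv) of the lemma gives (iii). For $z\in F_j$ and $w\in G_j^c$ the lemma forces $\beta(z,w)\geq\sigma$, so the kernel estimate produces a uniform factor $(1+\sigma)^{-M}$ on the off-diagonal region. A Schur test using the bounded overlap and Corollary~\ref{cor:car1} to control the $d\mu(w)$ integration against the $A^p_\alpha$-norm of $f$ then yields
\[
\Bigl\|\sum_j M_{1_{F_j}}S_0T_{\mu 1_{G_j^c}}\Bigr\|_{\mathcal L(A^p_\alpha,L^p_\alpha)}\lesssim_{S_0,\mu}(1+\sigma)^{-M},
\]
which is less than $\epsilon/2$ once $\sigma$ is large. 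The main obstacle is the kernel decay: pointwise bounds of size $|1-\overline z w|^{-(n+1+\alpha)}$ are classical, but extracting arbitrarily fast polynomial decay in $\beta(z,w)$ while simultaneously carrying matrix-valued symbols through the iterated Forelli--Rudin argument is where the delicate work lies; everything else is routine once this estimate is in hand.
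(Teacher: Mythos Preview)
Your density reduction and the boundedness of $A\mapsto\sum_j M_{1_{F_j}}AT_{\mu 1_{G_j}}$ are fine and match the paper. The fatal gap is the kernel-decay estimate: the claim
\[
\bigl\|(S_0K_w^{(\alpha)})(z)\bigr\|\lesssim_{S_0,M}\frac{1}{|1-\overline z w|^{n+1+\alpha}}\cdot\frac{1}{(1+\beta(z,w))^M}
\]
is false already for $S_0=\mathrm{Id}$ (i.e.\ $T_{I_d}$), where the left side equals $|1-\overline z w|^{-(n+1+\alpha)}$ exactly and the inequality would force $(1+\beta(z,w))^M\lesssim_M 1$. More generally, applying a Toeplitz operator with an arbitrary $L^\infty_{M^d}$ symbol cannot manufacture off-diagonal decay in $\beta$; the Forelli--Rudin integrals in Lemma~\ref{Growth} give size bounds of the right order but no extra $(1+\beta)^{-1}$ factor to ``peel off.'' So the inductive scheme never gets started, and the remainder of your argument (the Schur test producing the $(1+\sigma)^{-M}$ smallness) has no input.

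The paper avoids pointwise kernel decay entirely. Instead of estimating $M_{1_{F_j}}S_0T_{\mu 1_{G_j^c}}$ directly, it uses the nested families $F_{0,j}\subset F_{1,j}\subset\cdots\subset F_{k+1,j}$ of Lemma~\ref{lem:mswLem2.6} and a telescoping sum (the operators $S_m$, $\tilde S_m$ in Lemma~3.3) so that each consecutive difference contains \emph{exactly one adjacent pair} of the form $M_{1_{F_{m,j}}a_m}P_\alpha M_{1_{F_{m+1,j}^c}a_{m+1}}$ with $\beta$-separation $\geq\sigma$. That single separated pair is then handled by the Schur-test Lemma~\ref{Tech2}, whose smallness factor $\beta_{p,\alpha}(\sigma)\to 0$ comes from Lemma~\ref{lem:Tech1}, not from any decay of $S_0$ on reproducing kernels. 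If you want to repair your approach, this localization-by-layers idea is the missing ingredient.
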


To prove this, we prove several estimates and put them together at the end of 
this section to prove Theorem \ref{thm:mainSec3}.

\begin{lem}\label{lem:Tech1}
Let $1<p<\infty$, $\alpha>-1$, and $\mu$ be a matrix--valued  
Carleson measure. Suppose that \begin{math}F_{j},K_{j}\subset \Bn\end{math} are 
Borel sets such that $\{F_{j}\}_{j=1}^{\infty}$ are pairwise disjoint 
and 
\begin{math}\beta(F_{j},K_{j})>\sigma \geq 1 \text{ for all }j\end{math}. 
If \begin{math}0 < \gamma < \min\left\{\frac{1}{p(n+1+\alpha)},
\frac{p-1}{p}\right\}\end{math}, 
then
\begin{align*}
\int_{\Bn}\sum_{j=1}^{\infty}1_{F_{j}}(z)1_{K_{j}}(w)
\frac{(1-|w|^{2})^{-1/p}}{|1-\overline{z}w|^{n+1+\alpha}} d|\mu|(w) \lesssim
\|T_{\mu}\|_{\mathcal{L}(A_{\alpha}^{p})}
(1-\delta^{2n})^{\gamma}(1-|z|^{2})^{1/p}.
\end{align*}
\end{lem}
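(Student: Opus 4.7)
Since the $F_j$'s are pairwise disjoint, at most one index $j_0=j_0(z)$ has $z\in F_{j_0}$, so the left-hand side is dominated by $I(z):=\int_{K_{j_0}}(1-|w|^2)^{-1/p}|1-\overline z w|^{-(n+1+\alpha)}\,d|\mu|(w)$. The assumption $\beta(F_{j_0},K_{j_0})>\sigma$ combined with the identity $1-|\phi_z(w)|^2=(1-|z|^2)(1-|w|^2)/|1-\overline z w|^2$ gives, for $w$ in the integration domain and $\delta:=\tanh\sigma\in(0,1)$,
$$\frac{(1-|z|^2)(1-|w|^2)}{|1-\overline z w|^2}<1-\delta^{2}\le 1-\delta^{2n},$$
(using $\delta^{2n}\le\delta^{2}$ for $n\ge 1$). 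This is the source of the $(1-\delta^{2n})^\gamma$ decay factor.

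Raising this inequality to the power $\gamma>0$ and inserting it into the integrand yields
$$I(z)\le\frac{(1-\delta^{2n})^\gamma}{(1-|z|^2)^\gamma}\int\frac{(1-|w|^2)^{-1/p-\gamma}}{|1-\overline z w|^{n+1+\alpha-2\gamma}}\,d|\mu|(w),$$
so it suffices to bound the residual integral by $\|T_\mu\|\,(1-|z|^2)^{\gamma+1/p}$. I plan to apply H\"older's inequality with conjugate exponents $p,q$, splitting the integrand as $f\cdot g$ with $f(w)=|1-\overline z w|^{-\theta}$ (and $g$ absorbing the remaining powers). For $\theta>(n+1+\alpha)/p$, the $f^p$-integral is controlled by the Carleson-dual estimate
$$\int_{\Bn}\frac{d|\mu|(w)}{|1-\overline z w|^{s}}\lesssim\|T_\mu\|\,(1-|z|^2)^{n+1+\alpha-s},\qquad s>n+1+\alpha,$$
obtained by testing Corollary~\ref{cor:car1} against the holomorphic function $(1-\overline z w)^{-s/p}$ and invoking Lemma~\ref{Growth} in its growing regime.

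The main obstacle is bounding the $g^{q}$-integral, whose integrand contains the non-holomorphic factor $(1-|w|^2)^{-q/p-q\gamma}$ that cannot be tested against Carleson directly. The treatment relies on a further invocation of the $\beta$-separation inequality together with Lemma~\ref{Growth}: the singular $(1-|w|^2)^{-q/p-q\gamma}$ is traded for additional powers of $|1-\overline z w|$ and $(1-|z|^2)$, and the resulting $|1-\overline z w|$-integral is estimated either in Lemma~\ref{Growth}'s growing or bounded regime depending on whether the exponent exceeds $n+1+\alpha$. The two hypotheses $\gamma<1/(p(n+1+\alpha))$ and $\gamma<(p-1)/p$ are precisely the feasibility conditions for this two-step Carleson estimation: the first ensures $\theta$ may be selected strictly above $(n+1+\alpha)/p$ so the $f^{p}$-piece falls in Lemma~\ref{Growth}'s growing regime, and the second keeps the residual $|1-\overline z w|$-exponent on the $g^{q}$-piece large enough to stay usable. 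The delicate bookkeeping I expect to be the subtlest step is then ensuring that the $(1-|z|^2)$-exponents from the two H\"older factors combine with the prefactor $(1-\delta^{2n})^\gamma/(1-|z|^2)^\gamma$ to give exactly $(1-\delta^{2n})^\gamma(1-|z|^2)^{1/p}$.
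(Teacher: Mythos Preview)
The paper's proof is a single sentence: Corollary~\ref{cor:car1} shows that $|\mu|$ is a scalar $A^p_\alpha$-Carleson measure with $\|T_{|\mu|}\|_{\mathcal{L}(A^p_\alpha)}\simeq\|T_\mu\|_{\mathcal{L}(A^p_\alpha)}$, and the stated inequality is then literally the scalar estimate \cite{MSW}*{Lemma~3.3} applied to $|\mu|$. Nothing is reproved; the matrix-valued statement is reduced to the known scalar one.

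You are instead attempting to reprove the scalar inequality from scratch. The opening steps---reducing to a single index $j_0$ and inserting a factor $(1-\delta^2)^\gamma$ via the identity $1-|\varphi_z(w)|^2=(1-|z|^2)(1-|w|^2)/|1-\bar z w|^2$---are fine. The gap is in your treatment of the $g^q$-piece after the H\"older split. You propose to ``trade'' the non-holomorphic factor $(1-|w|^2)^{-q/p-q\gamma}$ for powers of $|1-\bar z w|$ and $(1-|z|^2)$ using the $\beta$-separation inequality again. But that inequality reads $(1-|z|^2)(1-|w|^2)<(1-\delta^2)\,|1-\bar z w|^2$, which gives an \emph{upper} bound on $1-|w|^2$ and hence only a \emph{lower} bound on $(1-|w|^2)^{-a}$---the wrong direction. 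There is no pointwise upper bound on $(1-|w|^2)^{-a}$ in terms of $|1-\bar z w|$ and $(1-|z|^2)$, since $(1-|w|^2)^{-a}$ blows up at $\partial\Bn$ independently of $z$; and Lemma~\ref{Growth} cannot be invoked directly either, because your integral is against $d|\mu|$, not $dv_\alpha$, and the integrand is not holomorphic. The actual argument in \cite{MSW} (and Su\'arez) avoids H\"older altogether: one discretizes over a hyperbolic lattice $\{a_k\}$, applies the geometric Carleson condition $|\mu|(D(a_k,r))\lesssim\|T_\mu\|\,(1-|a_k|^2)^{n+1+\alpha}$ on each ball, and converts the resulting sum back to a $v_\alpha$-integral to which Lemma~\ref{Growth} applies. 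The constraints on $\gamma$ emerge from the exponent conditions in that final Forelli--Rudin estimate, not from any H\"older feasibility.
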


\begin{proof}
This is a consequence of \cite{MSW}*{Lemma 3.3}, and Corollary \ref{cor:car1}.
\end{proof}

\begin{lem}\label{Tech2}
Let $1<p<\infty$ and $\mu$ be a matrix--valued \begin{math}\apa\end{math} 
Carleson measure. Suppose that 
\begin{math}F_{j},K_{j}\subset \Bn\end{math} are Borel sets, 
\begin{math}a_{j}\in L_{M^d}^{\infty}\end{math}, and 
$b_j\in L^{\infty}_{M_d}(\tau_\mu)$. 
\begin{description}
  \item[(i)] \begin{math}\beta(F_{j},K_{j}) \geq\sigma\geq 1\end{math};
  \item[(ii)] \begin{math}\supp a_{j}\subset F_{j}\end{math} and 
	       \begin{math}\supp b_{j}\subset K_{j}\end{math};
  \item[(iii)]  Every \begin{math}z\in\Bn\end{math} belongs to at most 
  \begin{math}N\end{math} of the sets
	       \begin{math}F_{j}\end{math}.
\end{description}
Then \begin{math}\sum_{j=1}^{\infty}M_{a_{j}}P_{\mu}M_{b_{j}}\end{math} is a 
bounded 
operator from \begin{math}A_{\alpha}^{p}\end{math} to 
\begin{math}L_{\alpha}^{p}\end{math} and there is a function \begin{math}
\beta_{p,\alpha}(\sigma)\to 0\end{math} when 
\begin{math}\sigma\to\infty\end{math} 
such that:

\begin{equation}
\label{Tech2-Est1}
\norm{\sum_{j=1}^{\infty} M_{a_j} P_{\mu} M_{b_j}f}_{L^p_{\alpha}}\leq 
N\beta_{p,\alpha}(\sigma)
\norm{T_\mu}_{\mathcal{L}(A^p_\alpha)}\norm{f}_{A^p_{\alpha}},
\end{equation}
and for every \begin{math}f\in\apa\end{math}
\begin{equation}
\label{Tech2-Est2}
\sum_{j =1}^{\infty}\norm{M_{a_j} P_{\mu} M_{b_j} f}_{L^p_{\alpha}}^p\leq 
N\beta_{p,\alpha}^p(\sigma)
\norm{T_\mu}_{\mathcal{L}(A^p_\alpha)}^p\norm{f}^{p}_{A^p_{\alpha}}.
\end{equation}
\end{lem}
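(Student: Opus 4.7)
The strategy is to establish \eqref{Tech2-Est2} first and then deduce \eqref{Tech2-Est1} from the fact that hypothesis (iii) limits the number of active summands at each $z$ to at most $N$. Without loss of generality assume $\sup_j\|a_j\|_\infty,\sup_j\|b_j\|_\infty\le 1$, absorbing the bounds into the final constant. Using the equivalence $d|\mu|\simeq \|M_\mu\|\,d\tau_\mu$ from the discussion preceding Corollary~\ref{cor:car1}, together with the support conditions, one has the pointwise domination
$$
\|(M_{a_j}P_\mu M_{b_j}f)(z)\|_p\le 1_{F_j}(z)\,I_j(z),\qquad I_j(z):=\int 1_{K_j}(w)\frac{\|f(w)\|_p}{|1-\overline{z}w|^{n+1+\alpha}}\,d|\mu|(w).
$$

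I would estimate $I_j(z)^p$ by H\"older in $w$ using the split that assigns the weight $(1-|w|^2)^{1/(pq)}$ to one factor and $(1-|w|^2)^{-1/(pq)}$ to the other, giving $I_j(z)^p\le A_j(z)\cdot B_j(z)^{p/q}$, where
$$
A_j(z)=\int 1_{K_j}(w)\|f(w)\|_p^p\,(1-|w|^2)^{1/q}\,\frac{d|\mu|(w)}{|1-\overline zw|^{n+1+\alpha}},\quad B_j(z)=\int 1_{K_j}(w)\frac{(1-|w|^2)^{-1/p}}{|1-\overline zw|^{n+1+\alpha}}\,d|\mu|(w).
$$
For $z\in F_j$ the non-negative factor $B_j(z)$ is dominated by the full sum $\sum_k 1_{F_k}(z)B_k(z)$, to which Lemma~\ref{lem:Tech1} applies (splitting $\{F_j\}$ into $N$ pairwise disjoint subfamilies if needed to accommodate its disjointness hypothesis), yielding $B_j(z)\lesssim \|T_\mu\|\,(1-\delta^{2n})^\gamma(1-|z|^2)^{1/p}$ for a $\delta=\delta(\sigma)\to 1$ as $\sigma\to\infty$.

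Integrating $1_{F_j}(z)I_j(z)^p$ in $z$, summing over $j$, and applying Fubini reduces the proof of \eqref{Tech2-Est2} to controlling
$$
J(w):=\int\sum_j 1_{F_j}(z)1_{K_j}(w)\frac{(1-|z|^2)^{1/q}}{|1-\overline zw|^{n+1+\alpha}}\,dv_\alpha(z).
$$
Here hypotheses (i) and (iii) give $\sum_j 1_{F_j}(z)1_{K_j}(w)\le N\cdot 1_{\{\beta(z,w)\ge\sigma\}}$, and on that set $|1-\overline zw|^2\ge (1-|z|^2)(1-|w|^2)/(1-\delta^2)$. Peeling off a small power $\epsilon>0$ of the kernel as pure $\beta$-decay and applying Lemma~\ref{Growth} to the residual convergent integral produces $J(w)\lesssim N\,(1-\delta^2)^\eta(1-|w|^2)^{-\eta}$ for a small $\eta=\eta(\epsilon)>0$ that can be chosen to satisfy $1/q-\eta\ge 0$. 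Then $(1-|w|^2)^{1/q-\eta}\le 1$, and Corollary~\ref{cor:car1} converts $\int\|f\|_p^p\,d|\mu|$ into $\|T_\mu\|\|f\|_{A^p_\alpha}^p$, yielding \eqref{Tech2-Est2} with decay factor
$\beta_{p,\alpha}(\sigma):=(1-\delta^{2n}(\sigma))^{\gamma/q}(1-\delta^2(\sigma))^{\eta/p}\to 0$.

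Finally, at each $z$ at most $N$ summands of $\sum_j M_{a_j}(z)(P_\mu M_{b_j}f)(z)$ are non-zero by (iii), so the power-mean inequality gives $\|\sum_j\cdot\|_p^p\le N^{p-1}\sum_j\|\cdot\|_p^p$ pointwise, which integrates to $\|\sum_j M_{a_j}P_\mu M_{b_j}f\|_{L^p_\alpha}^p\le N^{p-1}\sum_j\|M_{a_j}P_\mu M_{b_j}f\|_{L^p_\alpha}^p$; combined with \eqref{Tech2-Est2} this gives \eqref{Tech2-Est1}. The principal technical obstacle is the estimate for $J(w)$: one must produce genuine $\sigma$-decay while ensuring the leftover $(1-|w|^2)^{-\eta}$ is absorbed by the $(1-|w|^2)^{1/q}$ already present, all without the luxury of a bounded-overlap hypothesis on the $K_j$. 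This forces the entire $z$-side decay to be purchased through the crude bound $\sum_j 1_{F_j}1_{K_j}\le N$ combined with $\beta(z,w)\ge\sigma$, rather than via a symmetric Lemma~\ref{lem:Tech1}-type estimate in the $z$-variable.
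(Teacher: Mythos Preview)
Your approach is correct and follows essentially the same route as the paper: the H\"older split $I_j^p\le A_j B_j^{p/q}$ with weight $(1-|w|^2)^{\pm 1/(pq)}$ is precisely the unrolled form of Schur's test with the weight $h(z)=(1-|z|^2)^{-1/(pq)}$ that the paper invokes as a black box. Your bound on $B_j$ via Lemma~\ref{lem:Tech1} is one leg of that Schur test, and your $J(w)$ integral is the other.

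Two remarks. First, the exponent you quote from Lemma~\ref{lem:Tech1} inherits a sign typo from that lemma's displayed statement: the bound that is actually used (and that the paper invokes in its Schur application) is $B_j(z)\lesssim\|T_\mu\|(1-\delta^{2n})^\gamma(1-|z|^2)^{-1/p}$ for $z\in F_j$, so your $J(w)$ should carry $(1-|z|^2)^{-1/q}$ rather than $(1-|z|^2)^{+1/q}$. Second, with that correction you do not need to extract any $\sigma$-decay from $J(w)$: simply dropping the indicators and applying Lemma~\ref{Growth} gives $J(w)\lesssim N(1-|w|^2)^{-1/q}$, and since $A_j$ already carries the factor $(1-|w|^2)^{1/q}$ this cancels, leaving $\int\|f\|_p^p\,d|\mu|\lesssim\|T_\mu\|\,\|f\|_{A^p_\alpha}^p$ by the Carleson embedding. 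The paper proceeds exactly this way --- all $\sigma$-decay comes from the Lemma~\ref{lem:Tech1} side, and the $z$-integral is handled by a straight Forelli--Rudin estimate with no reference to $\sigma$. Your peeling argument on $J(w)$ is therefore unnecessary (and, with the corrected exponent, does not actually improve the outcome), though it is not wrong. The paper also organizes the $N>1$ case a bit differently, first proving the disjoint case $N=1$ (where \eqref{Tech2-Est1} and \eqref{Tech2-Est2} coincide) and then stratifying each $F_j$ into $N$ disjoint layers; your deduction of \eqref{Tech2-Est1} from \eqref{Tech2-Est2} via the power-mean inequality is a fine alternative.
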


\begin{proof}
Since $\mu$ is a matrix--valued Carleson measure for $A^p_{\alpha}$, 
$\kappa_{(p,d)}$ is bounded, with 
\begin{math}\kappa_{(p,d)}\simeq 
\norm{\abs{\mu}}^{\frac{1}{p}}_{\textnormal{RKM}}\simeq
\norm{T_{\mu}}_{\fl(\apa)}\simeq
\norm{T_{\abs{\mu}}}_{\fl(\apa)} 
\end{math}
it is enough to prove the following two estimates:
\begin{equation}
\label{Tech2-Est1-Red}
\norm{\sum_{j=1}^{\infty} M_{a_j} P_{\mu} M_{b_j}f}_{L^p_\alpha}\leq N
\psi_{p,\alpha}(\delta)
\norm{T_{\mu}}_{\mathcal{L}(A^p_\alpha)}^{1-\frac{1}{p}}
\norm{f}_{L^p(\Bn,\Cd;\abs{\mu})},
\end{equation}
and
\begin{equation}
\label{Tech2-Est2-Red}
\sum_{j =1}^{\infty}\norm{M_{a_j} P_{\mu} M_{b_j} f}_{L^p_\alpha}^p\leq 
N\psi_{p,\alpha}^p(\delta)
\norm{T_\mu}_{\mathcal{L}(A^p_\alpha)}^{p-1}
\norm{f}^{p}_{L^p(\Bn,\Cd;\abs{\mu})}
\end{equation}
where $\delta=\tanh\frac{\sigma}{2}$ and $\psi_{p,\alpha}(\delta)\to 0$ 
as $\delta\to 1$.  
Estimates \eqref{Tech2-Est1-Red} and \eqref{Tech2-Est2-Red} imply 
\eqref{Tech2-Est1} and \eqref{Tech2-Est2}
via an application of the matrix--valued Carleson Embedding
Theorem, Corollary \ref{cor:car1}.

First, consider the case when $N=1$, and so the sets $\{F_j\}_{j=1}^{\infty}$\
are pairwise disjoint.  Set
$$
\Phi(z,w)=\sum_{j=1}^{\infty} 1_{F_j}(z) 1_{K_j}(w) 
\frac{1}{\abs{1-\overline{z}w}^{n+1+\alpha}}.
$$
Suppose now that $\norm{a_j}_{L_{M^{d}}^\infty}$ and
$\norm{b_j}_{L_{M^{d}}^\infty(\tau_{\mu})}\leq 1$.  
There holds:
\begin{align*}
\norm{\sum_{j=1}^{\infty} M_{a_j} P_{\mu} M_{b_j} f(z)}_{p} &=  
\norm{\sum_{j=1}^{\infty}a_j(z)\int_{\Bn} 
\frac{b_j(w) f(w)}{(1-\overline{w} z)^{n+1+\alpha}} \,d\mu(w)}_{p}
\\&=\norm{\sum_{j=1}^{\infty}a_j(z)\int_{\Bn} 
\frac{M_{\mu}(w)b_j(w) f(w)}{(1-\overline{w} z)^{n+1+\alpha}} 
d\tau_{\mu}(w)}_{p}
\\& \leq \int_{\Bn} \Phi(z,w) \norm{M_{\mu}(w)}\norm{f(w)}_{p}\,d\tau_{\mu}(w)
\\& \simeq \int_{\Bn} \Phi(z,w) \norm{f(w)}_{p}\,d\abs{\mu}(w).
\end{align*}
We will show that that the operator with kernel $\Phi(z,w)$ is 
bounded from \begin{math}L^{p}(\Bn,\C;\abs{\mu})\end{math} into
\begin{math}L_{\alpha}^{p}\end{math} with norm controlled by a 
constant,
\begin{math}C(n,\alpha,p)\end{math} times 
\begin{math}\psi_{p,\alpha}(\delta)
\norm{T_{{\mu}}}_{\fl(\apa)}^{1-\frac{1}{p}}\end{math}. Assuming 
this is true, there holds:
\begin{align*}
\norm{\sum_{j=1}^{\infty} M_{a_j} P_{\mu} M_{b_j}f}_{L^p_\alpha} &=
\int_{\Bn}\norm{\sum_{j=1}^{\infty} M_{a_j} P_{\mu} M_{b_j} 
f(z)}_{p}^{p}dv_{\alpha}(z)
\\&\leq\int_{\Bn}\lp\int_{\Bn} \Phi(z,w) 
\norm{f(w)}_{p}\,d\abs{\mu}(w)\rp^{p}
dv_{\alpha}(z)
\\&\leq C(n,\alpha,p)\psi_{p,\alpha}(\delta)
\norm{T_{{\mu}}}_{\fl(\apa)}^{1-\frac{1}{p}}
\int_{\Bn}\norm{f(z)}_{p}^{p}d\abs{\mu}(z).
\end{align*}

We use Schur's Test to prove that this operator is bounded. 
Set $h(z)=(1-\abs{z}^2)^{-\frac{1}{pq}}$ and observe that Lemma 
\ref{lem:Tech1} gives
\begin{align*}
\int_{\Bn} \Phi(z,w) h(w)^q \,d\abs{\mu}(w)\lesssim 
\norm{T_\mu}_{\mathcal{L}(A^p_\alpha)}
(1-\delta^{2n})^{\gamma}h(z)^{q}.
\end{align*}
Using Lemma \ref{Growth}, there holds 
\begin{align*}
\int_{\Bn}\Phi(z,w)h(z)^{p}dv_{\alpha}(z) &= 
\int_{\Bn}\sum_{j=1}^{\infty} 1_{F_j}(z) 1_{K_j}(w) 
\frac{(1-\abs{z}^{2})^{-\frac{1}{q}}}{\abs{1-\overline{z}w}^{n+1+\alpha}}
dv_{\alpha}(z)
\\&\leq \int_{\Bn}\sum_{j=1}^{\infty} 1_{F_j}(z) 1_{K_j}(w) 
(1-\abs{w}^{2})^{-\frac{1}{q}-\alpha}dv_{\alpha}(z)
\\&\lesssim h(w)^{p}.
\end{align*}

Therefore, Schur's Lemma says that the operator with kernel $\Phi(z,w)$ is 
bounded
from $L^p(\Bn,\Cd;\abs{\mu})$ to $L^p_{\alpha}(\Bn;\mathbb{C})$ with norm 
controlled by a constant $C(n,\alpha,p)$ times 
\begin{math}\psi_{p,\alpha}(\delta) 
\norm{T_\mu}_{\mathcal{L}(A^p_\alpha)}^{1-\frac{1}{p}}
\end{math}. 

This gives \eqref{Tech2-Est1-Red} when $N=1$.  
Since the sets $F_j$ are disjoint in this case, then we also have 
\eqref{Tech2-Est2-Red} because
$$
\sum_{j =1}^{\infty}\norm{M_{a_j} P_{\mu} M_{b_j} f}_{L^p_{\alpha}}^p
=\norm{\sum_{j=1}^{\infty} 
M_{a_j}P_{\mu} M_{b_j} f}_{L^p_{\alpha}}^p.
$$

Now suppose that $N>1$.  Let $z\in \Bn$ and let 
$S(z)=\left\{j: z\in F_j\right\}$, ordered according to the index $j$.
Each $F_j$ admits a disjoint decomposition $F_j=\bigcup_{k=1}^{N} A_{j}^{k}
$ where $A_{j}^{k}$ is the set of $z\in F_j$
such that
$j$ is the $i^{\textnormal{th}}$ element of $S(z)$.
Then, for $1\leq k\leq N$ the sets $\{A_{j}^k: j\geq 1\}$ are pairwise 
disjoint.
Hence, we can apply the computations obtained above to conclude that
\begin{eqnarray*}
\sum_{j=1}^{\infty} \norm{M_{a_j} P_{\mu} M_{b_j} f}_{L^p_{\alpha}}^p & = &
\sum_{j=1}^{\infty} \sum_{k=1}^{N} \norm{M_{a_j 1_{A^k_j}} P_\mu 
M_{b_j} f}_{L^p_{\alpha}}^p\\
 & = & \sum_{k=1}^{N}\sum_{j=1}^{\infty} \norm{M_{a_j 1_{A^k_j}} 
P_\mu M_{b_j} f}_{L^p_{\alpha}}^p\\
& \lesssim & N 
\psi_{p,\alpha}^p(\delta)\norm{T_\mu}_{\mathcal{L}(A^p_\alpha,)}^{p-1}
\norm{f}_{A^p_\alpha}^p.
\end{eqnarray*}
This gives \eqref{Tech2-Est2-Red}, and \eqref{Tech2-Est1-Red} follows from 
similar computations.
\end{proof}

\begin{lem}
Let \begin{math}1<p<\infty\end{math} and \begin{math}\sigma \geq 1 \end{math}.
Suppose that \begin{math}a_{1},\cdots,a_{k}\in L_{M^d}^{\infty}\end{math}  with
norm at most 1 and that \begin{math}\mu\end{math} is a matrix--valued
Carleson measure. Consider the covering of 
\begin{math}\Bn\end{math} given by Lemma \ref{lem:mswLem2.6} for these values of
\begin{math}k\end{math} and \begin{math}\sigma\geq 1\end{math}. Then there is a 
positive constant 
\begin{math}C(p,k,n,\alpha)\end{math} such that:
\begin{align*}
\left\|\left[\prod_{i=1}^{k}T_{a_{i}}\right]T_{\mu}-
\sum_{j=1}^{\infty}M_{1_{F_{0,j}}}\left[\prod_{i=1}^{k}\right]
T_{\mu 1_{F_{k+1,j}}}\right\|_{\mathcal{L}(\apa)}
\lesssim \beta_{p,\alpha}(\sigma)\|T_{\mu}\|_{\mathcal{L}(\apa)}
\end{align*}
where \begin{math}\beta_{p,\alpha}(\sigma)\to 0\end{math} as 
\begin{math}\sigma\to\infty\end{math}.
\end{lem}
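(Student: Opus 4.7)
My plan is to reduce the claim to $k+1$ applications of Lemma~\ref{Tech2} via a partition-of-unity rearrangement followed by a telescoping expansion. Using property (i) of Lemma~\ref{lem:mswLem2.6} (that $\{F_{0,j}\}$ is a disjoint cover of $\Bn$) and the identity $T_{\mu 1_E}=T_\mu\circ M_{1_E}$, the difference can be rewritten as
\[
D := \sum_j M_{1_{F_{0,j}}}\Bigl[\prod_{i=1}^k T_{a_i}\Bigr] T_{\mu 1_{F_{k+1,j}^c}}.
\]
I would then telescope by inserting $I=M_{1_{F_{\ell,j}}}+M_{1_{F_{\ell,j}^c}}$ between $T_{a_\ell}$ and $T_{a_{\ell+1}}$ for each $1\leq\ell\leq k-1$, and between $T_{a_k}$ and $T_\mu$. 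Grouping terms by the position of the first $M_{1_{F_{\ell,j}^c}}$ factor decomposes $D=\sum_{\ell=1}^{k+1} E_\ell$, where each $E_\ell$ contains a distinguished ``bottleneck'' $\mathcal{B}_\ell^{(j)} := M_{1_{F_{\ell-1,j}}} T_{a_\ell} M_{1_{F_{\ell,j}^c}}$ for $\ell\leq k$, or $M_{1_{F_{k,j}}} T_{\mu 1_{F_{k+1,j}^c}}$ for $\ell=k+1$. By property (iii) of Lemma~\ref{lem:mswLem2.6}, the two indicator supports inside each bottleneck have $\beta$-distance at least $\sigma$.

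For each $E_\ell$, I would factor $E_\ell = \sum_j L_\ell^{(j)} \mathcal{B}_\ell^{(j)} R_\ell^{(j)}$ with $L_\ell^{(j)}$ starting with $M_{1_{F_{0,j}}}$. Since $\{F_{0,j}\}$ is pairwise disjoint,
\[
\|E_\ell f\|_{\lpa}^p = \sum_j \|L_\ell^{(j)} \mathcal{B}_\ell^{(j)} R_\ell^{(j)} f\|_{\lpa}^p \leq C^p \sum_j \|\mathcal{B}_\ell^{(j)} R_\ell^{(j)} f\|_{\lpa}^p,
\]
where $C$ uniformly bounds $\|L_\ell^{(j)}\|_{\fl(\lpa)}$ (in terms of $\prod_i\|a_i\|_{L^\infty_{M^d}}$ and $\|P_\alpha\|$). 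For the rightmost bottleneck $\ell=k+1$ the right factor is trivial, so Lemma~\ref{Tech2}'s second estimate applied to the measure $\mu$ yields $\sum_j \|\mathcal{B}_{k+1}^{(j)} f\|^p \leq N\beta_{p,\alpha}^p(\sigma)\|T_\mu\|^p\|f\|^p$.

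The main obstacle is the intermediate cases $\ell\leq k$, where $R_\ell^{(j)} = U_\ell \circ M_{1_{F_{k+1,j}^c}}$ with $U_\ell := \prod_{i=\ell+1}^k T_{a_i}\cdot T_\mu$ independent of $j$ depends on $j$ only through the trailing indicator multiplication; Lemma~\ref{Tech2}'s summed estimate covers $\sum_j \|\mathcal{B}_\ell^{(j)} h\|^p$ only for a $j$-independent argument $h$. I would circumvent this by splitting $M_{1_{F_{k+1,j}^c}} = I - M_{1_{F_{k+1,j}}}$: the $I$-piece has the fixed argument $U_\ell f$ and is directly controlled by Lemma~\ref{Tech2} (applied to the volume measure $v_\alpha\cdot I$ with the scalar symbols $1_{F_{\ell-1,j}}$ and $a_\ell 1_{F_{\ell,j}^c}$), while for the complementary piece $\sum_j \|\mathcal{B}_\ell^{(j)} U_\ell(1_{F_{k+1,j}} f)\|^p$ I would treat $\mathcal{B}_\ell^{(j)} U_\ell M_{1_{F_{k+1,j}}}$ as a single integral operator, extracting the $\beta$-decay from the identity $|1-\overline{z}w|^{-2}\leq (1-\delta^2)[(1-|z|^2)(1-|w|^2)]^{-1}$, valid whenever $|\phi_z(w)|\geq\delta=\tanh(\sigma/2)$, combined with a Schur-type bound in the spirit of Lemma~\ref{lem:Tech1} and the bounded-overlap property (iv) of the family $\{F_{k+1,j}\}$. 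Summing the bounds over $\ell=1,\ldots,k+1$ then completes the proof with constant $C(p,k,n,\alpha)\beta_{p,\alpha}(\sigma)\|T_\mu\|$.
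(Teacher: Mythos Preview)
Your telescoping of $D=\sum_j M_{1_{F_{0,j}}}\bigl[\prod_i T_{a_i}\bigr]T_{\mu 1_{F_{k+1,j}^c}}$ into layers $E_\ell$ is a legitimate reorganization, and the cases $\ell=k+1$ and the ``$I$--piece'' of $E_\ell$ ($\ell\le k$) are handled exactly as you say via Lemma~\ref{Tech2}. The gap is in the ``$1_{F_{k+1,j}}$--piece''. You propose to view $\mathcal{B}_\ell^{(j)} U_\ell M_{1_{F_{k+1,j}}}$ as a single integral operator and run a Schur argument, extracting decay from $\beta\bigl(F_{\ell-1,j},F_{\ell,j}^c\bigr)\ge\sigma$. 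This does not go through: $U_\ell=\bigl[\prod_{i>\ell}T_{a_i}\bigr]P_\mu$ contains $k-\ell$ intermediate projections $P_\alpha$, so the composite kernel is an iterated integral, and the $\beta$--separation sits only at the bottleneck, not between the bottleneck and the final support $F_{k+1,j}$. There is no single reproducing--kernel factor linking $F_{\ell-1,j}$ directly to $F_{k+1,j}$ on which to use the inequality $|1-\overline{z}w|^{-2}\le(1-\delta^2)\bigl[(1-|z|^2)(1-|w|^2)\bigr]^{-1}$.

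The fix is immediate and is precisely the mechanism the paper uses. The single--pair instance of Lemma~\ref{Tech2} (with measure $v_\alpha I_d$) gives $\|\mathcal{B}_\ell^{(j)}\|_{\fl(\lpa)}\le\beta_{p,\alpha}(\sigma)$ uniformly in $j$; hence
\[
\sum_j \|\mathcal{B}_\ell^{(j)} U_\ell M_{1_{F_{k+1,j}}}f\|_{\lpa}^p
\le \beta_{p,\alpha}^p(\sigma)\,C^p\sum_j\|T_{\mu 1_{F_{k+1,j}}}f\|_{\apa}^p,
\]
and the last sum is $\lesssim N\|T_\mu\|_{\fl(\apa)}^p\|f\|_{\apa}^p$ by Lemma~\ref{CM-Cor} together with the bounded overlap (iv) of $\{F_{k+1,j}\}$. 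For comparison, the paper sidesteps the complement $1_{F_{k+1,j}^c}$ altogether by inserting the \emph{fully localized} operator $\sum_j M_{1_{F_{0,j}}}\bigl[\prod_i T_{a_i 1_{F_{i,j}}}\bigr]T_{\mu 1_{F_{k+1,j}}}$ as an intermediate term and telescoping toward it from both sides: in one telescope the tail is $\bigl[\prod_{i>m}T_{a_i}\bigr]T_\mu f$, which is $j$--independent, and in the other the only $j$--dependence is through $T_{\mu 1_{F_{k+1,j}}}f$, which is summable in $j$ as above. Your route and the paper's are equivalent after your additional split $1_{F_{k+1,j}^c}=1-1_{F_{k+1,j}}$, but the paper's organization makes the summable tail appear directly rather than via a second decomposition.
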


\begin{proof}
First note that the quantity:
\begin{align*}
\left\|\left[\prod_{i=1}^{k}T_{a_{i}}\right]T_{\mu}-
\sum_{j=1}^{\infty}M_{1_{F_{0,j}}}\left[\prod_{i=1}^{k}\right]
T_{\mu 1_{F_{k+1,j}}}\right\|_{\mathcal{L}(\apa)},
\end{align*}
is dominated by the sum of
\begin{equation}
\label{Step1}
\norm{\left[ \prod_{i=1}^{k}T_{a_i} \right] 
T_{\mu}-\sum_{j=1}^\infty M_{1_{F_{0,j}}}
\left[ \prod_{i=1}^{k} T_{a_i1_{F_{i,j}}} \right]
T_{1_{F_{k+1,j}}\mu}}_{\mathcal{L}
(A^p_{\alpha},L^p_{\alpha})}
\end{equation}
and
\begin{align}
\label{Step2}
\norm{\sum_{j=1}^\infty M_{1_{F_{0,j}}}\left[ \prod_{i=1}^{k}T_{a_i}  \right] 
T_{\mu1_{F_{k+1,j}}}-
\sum_{j=1}^\infty 
M_{1_{F_{0,j}}}\left[ \prod_{i=1}^{k} T_{a_i 1_{F_{i,j}}}  \right]
T_{\mu1_{F_{k+1,j}}}}_{\mathcal{L}(A^p_{\alpha}, L^p_{\alpha})}.
\end{align}
Therefore, we only need to prove that the quantities in 
\eqref{Step1} and \eqref{Step2} are controlled by  
$\beta_{p,\alpha}(\sigma)\|T_{\mu}\|_{\mathcal{L}(\apa)}$.

For $0\leq m\leq k+1$, define the operators 
$S_m\in \mathcal{L}\left(A_{\alpha}^p,L_{\alpha}^p\right)$ by
$$
S_m=\sum_{j=1}^{\infty} M_{1_{F_{0,j}}}\left[ \prod_{i=1}^{m} T_{1_{F_{i,j}}a_i}
\prod_{i=m+1}^{k} T_{a_i} \right] T_\mu.
$$
Clearly we have
$S_0=\sum_{j=1}^{\infty} M_{1_{F_{0,j}}} \left[\prod_{i=1}^{k} T_{a_i}\right] T_\mu
= \left[\prod_{i=1}^{k} T_{a_i}\right]T_\mu$, with convergence in the strong operator 
topology.  Similarly, we have
$$
S_{k+1}=\sum_{j=1}^{\infty} M_{1_{F_{0,j}}}\left[\prod_{i=1}^{k} 
T_{a_i1_{F_{i,j}}} \right] T_{\mu1_{F_{k+1,j}}}.
$$
When $0\leq m\leq k-1$, a simple computation gives that
$$
S_m-S_{m+1}=\sum_{j=1}^{\infty} M_{1_{F_{0,j}}} 
\left[\prod_{i=1}^{m}T_{1_{F_{i,j}}a_i} \right]
T_{1_{F_{m+1,j}^c}a_{m+1}}  \left[ \prod_{i=m+2}^{k}T_{a_i}\right]  T_\mu.
$$
Here, of course, we should interpret this product as the identity when the lower index
is greater than the upper index.  Take any $f\in A^p_{\alpha}$ and apply Lemma 
\ref{Tech2}, in particular \eqref{Tech2-Est2}, Lemma \ref{lem:mswLem2.6} and some 
obvious estimates to see that
\begin{eqnarray*}
\norm{\left(S_m-S_{m+1}\right)f}_{L^p_{\alpha}}^p & \leq &
C(p)^{pm}\sum_{j=1}^{\infty}\norm{M_{1_{F_{m,j}}a_m}P_\alpha M_{1_{F_{m+1,j}^c}a_{m+1}}
\left[\prod_{i=m+2}^k T_{a_i} \right] T_\mu f}_{L^p_{\alpha}}^p\\
& \leq & C(p)^{pm}N\beta_{p,\alpha}^p(\sigma)
\norm{\left[\prod_{i=m+2}^k T_{a_i} \right] T_\mu f}_{L^p_{\alpha}}^p\\
& \leq & C(p)^{p(k-1)}N\beta_{p,\alpha}^p(\sigma)
\norm{T_\mu}_{\mathcal{L}(A^p_{\alpha})}^p\norm{f}_{A^p_{\alpha}}^p.
\end{eqnarray*}
Also,
$$
S_{k}-S_{k+1}=\sum_{j=1}^{\infty} M_{1_{F_{0,j}}} 
\left[ \prod_{i=1}^{k}T_{1_{F_{i,j}}a_i} \right] T_{\mu1_{F_{k+1,j}^c}},
$$
and again applying Lemma \ref{Tech2}, and in particular \eqref{Tech2-Est2}, we 
find that
$$
\norm{\left(S_k-S_{k+1}\right)f}_{L^p_{\alpha}}^p \leq C_p^{pk}N
\beta_{p,\alpha}^p(\sigma)
\norm{T_\mu}_{\mathcal{L}(A^p_{\alpha})}^p
\norm{f}^p_{A^p_{\alpha}}.
$$
Since $N=N(n)$, we have the following estimates for $0\leq m\leq k$,
$$
\norm{\left(S_m-S_{m+1}\right)f}_{L^p_{\alpha}}\lesssim \beta_{p,\alpha}
(\sigma)\norm{T_\mu}_{\mathcal{L}(A^p_{\alpha})}\norm{f}_{A^p_{\alpha}}.
$$
But from this it is immediate that \eqref{Step1} holds,
$$
\norm{\left(S_0-S_{k+1}\right)f}_{L^p_{\alpha}}\leq \sum_{m=0}^{k}
\norm{\left(S_m-S_{m+1}\right)f}_{L^p_{\alpha}}\lesssim \beta_{p,\alpha}
(\sigma)
\norm{T_\mu}_{\mathcal{L}(A^p_{\alpha})}\norm{f}_{A^p_{\alpha}}.
$$

The idea behind \eqref{Step2} is similar.  For $0\leq m\leq k$, define the 
operator
$$
\tilde{S}_m=\sum_{j=1}^{\infty} M_{1_{F_{0,j}}}
\left[\prod_{i=1}^{m} T_{1_{F_{i,j}}a_i}\prod_{i=m+1}^{k} T_{a_i}\right]
T_{\mu1_{F_{k+1,j}}},
$$
so we have
\begin{eqnarray*}
\tilde{S}_0 & = & \sum_{j=1}^{\infty} M_{1_{F_{0,j}}}
\left[\prod_{i=1}^{k} T_{a_i}\right] T_{\mu1_{F_{k+1,j}}}\\
\tilde{S}_k & = & \sum_{j=1}^{\infty}M_{1_{F_{0,j}}}
\left[\prod_{i=1}^{k}T_{a_i1_{F_{i,j}}}\right]T_{\mu1_{F_{k+1,j}}}.
\end{eqnarray*}
When $0\leq m\leq k-1$, a simple computation gives
$$
\tilde{S}_m-\tilde{S}_{m+1}=\sum_{j=1}^{\infty}M_{1_{F_{0,j}}}
\Big[\prod_{i=1}^{m} T_{1_{F_{i,j}}a_i}\Big]
T_{1_{F_{m+1,j}^c}a_{m+1}}  \left[\prod_{i=m+2}^{k}T_{a_i}\right]  
T_{\mu1_{F_{k+1,j}}}.
$$
Again, applying obvious estimates and using Lemma \ref{Tech2} one concludes that
\begin{eqnarray*}
\norm{\left(\tilde{S}_m-\tilde{S}_{m+1}\right)f}^p_{L_{\alpha}^p} & 
\leq & C(p)^{p(k-1)}\beta_{p,\alpha}^{p}(\sigma)
\sum_{j=1}^{\infty}\norm{T_{\mu1_{F_{k+1,j}}}f}_{A^p_{\alpha}}^{p}\\
 & \leq & C(p)^{p(k-1)}\beta_{p,\alpha}^{p}(\sigma) \norm{T_{\mu}}_{\mathcal{L}
 (A^p_{\alpha})}^{\frac{p}{q}} \sum_{j=1}^{\infty} 
 \norm{1_{F_{k+1,j}} f}_{L^p(\Bn,\Cd;\mu)}^p\\
 & \leq & C(p)^{p(k-1)}\beta_{p,\alpha}^{p}(\sigma) \norm{T_{\mu}}_{\mathcal{L}
 (A^p_{\alpha})}^{\frac{p}{q}} \norm{f}_{L^p(\Bn,\Cd;\mu)}^p\\
 & \leq & NC(p)^{p(k-1)}\beta_{p,\alpha}^{p}(\sigma)
 \norm{T_{\mu}}_{\mathcal{L}(A^p_{\alpha})}^{\frac{p}{q}+1}  
\norm{f}_{A^ p_{\alpha}}^p.
\end{eqnarray*}
Here the second inequality uses Lemma \ref{CM-Cor}, the next inequality uses 
that the 
sets $\{F_{k+1,j}\}_{j=1}^{\infty}$
form a covering of $\Bn$ with at most $N=N(n)$ overlap, and the last inequality 
uses 
Lemma \ref{lem:car}.
Summing up, for $0\leq m\leq k-1$ we have
$$
\norm{\left(\tilde{S}_m-\tilde{S}_{m+1}\right)f}_{L_{\alpha}^p}\lesssim 
\beta_{p,\alpha}(\sigma)\norm{T_{\mu}}_{\mathcal{L}(A^p_{\alpha})}
\norm{f}_{A^p_{\alpha}},
$$
which implies
$$
\norm{\left(\tilde{S}_0-\tilde{S}_{k}\right)f}_{L_{\alpha}^p}\leq 
\sum_{m=0}^{k-1}\norm{\left(\tilde{S}_m-\tilde{S}_{m+1}\right)f}_{L_{\alpha}^p}
\lesssim \beta_{p,\alpha}(\sigma)
\norm{T_{\mu}}_{\mathcal{L}(A^p_{\alpha})}\norm{f}_{A^p_{\alpha}},
$$
giving \eqref{Step2}.
\end{proof}

\begin{lem}\label{lem:msw3.6}Let 
\begin{align*}
S=\sum_{i=1}^{m}\left[\prod_{l=1}^{k_{i}}T_{a_{l}^{i}}\right]T_{\mu_{i}}
\end{align*}
where \begin{math}a_{j}^{i}\in L_{M^{d}}^{\infty}\end{math}.
Let \begin{math}k=\max_{1\leq i \leq m}\{k_{i}\}
\end{math} and let \begin{math}\mu_{i}\end{math} be matrix-valued measures such 
that
\begin{math}\abs{\mu_{i}}\end{math} are Carleson. 
Given \begin{math}\epsilon >0\end{math}, there is 
\begin{math}\sigma=\sigma(S,\epsilon)\geq 1\end{math} such that
if \begin{math} \{F_{i,j}\}_{j=1}^{\infty}\end{math} and 
\begin{math}0\leq i \leq k+1 \end{math} are the sets 
given by Lemma \ref{lem:mswLem2.6} for these values of 
\begin{math}\sigma\end{math} 
and \begin{math}k \end{math}, then
\begin{align*}
\left\|S-\sum_{j=1}^{\infty}M_{1_{F_{0,j}}}
\sum_{i=1}^{m}\left[\prod_{l=1}^{k_{i}}T_{a_{l}^{i}}\right]
T_{\mu_{i}1_{F_{k+1,j}}}\right\|_{\mathcal{L}(\apa\to L_{\alpha}^{p})} 
< \epsilon.
\end{align*}
\end{lem}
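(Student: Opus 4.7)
The plan is to reduce the lemma to the single--product case established in the previous lemma and to arrange for all summands to use a \emph{common} covering. Set $k=\max_{1\leq i\leq m} k_i$ and let $\{F_{i,j}\}$ for $0\leq i\leq k+1$ be the covering of $\Bn$ furnished by Lemma \ref{lem:mswLem2.6} for these values of $k$ and some $\sigma\geq 1$ to be chosen. By Corollary \ref{cor:car1}, each $\|T_{\mu_i}\|_{\mathcal{L}(\apa)}$ is comparable to $\||\mu_i|\|_{RKM}$ and hence finite.

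For each fixed $i$, the product $\prod_{l=1}^{k_i}T_{a_l^i}$ has only $k_i\leq k$ genuine Toeplitz factors, but we can think of it as a $k$--fold product by appending $k-k_i$ copies of the identity $T_{I_d}$ (where $I_d\in L^{\infty}_{M^d}$ is the constant identity matrix); since $P_\alpha$ is the identity on $\apa$, these extra factors are harmless. The previous lemma, applied with these padded $k$ factors and the covering above, yields
\begin{align*}
\left\|\left[\prod_{l=1}^{k_i}T_{a_l^i}\right]T_{\mu_i}
-\sum_{j=1}^{\infty}M_{1_{F_{0,j}}}\left[\prod_{l=1}^{k_i}T_{a_l^i}\right]
T_{\mu_i 1_{F_{k+1,j}}}\right\|_{\mathcal{L}(\apa,L_{\alpha}^{p})}
\lesssim \beta_{p,\alpha}(\sigma)\|T_{\mu_i}\|_{\mathcal{L}(\apa)},
\end{align*}
where the implicit constant depends only on $p,k,n,\alpha$ and the bound on $\|a_l^i\|_{L^{\infty}_{M^d}}$. (Equivalently, one could re-run the previous lemma's telescoping argument using $F_{0,j}\subset F_{1,j}\subset\cdots\subset F_{k_i,j}\subset F_{k+1,j}$ directly; the separation $\beta(F_{k_i,j},F_{k+1,j}^c)\geq\sigma$ is inherited from the nesting in Lemma \ref{lem:mswLem2.6}.)

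Summing over $i=1,\dots,m$ and applying the triangle inequality,
\begin{align*}
\left\|S-\sum_{j=1}^{\infty}M_{1_{F_{0,j}}}
\sum_{i=1}^{m}\left[\prod_{l=1}^{k_{i}}T_{a_{l}^{i}}\right]
T_{\mu_{i}1_{F_{k+1,j}}}\right\|_{\mathcal{L}(\apa,L_{\alpha}^{p})}
\lesssim \beta_{p,\alpha}(\sigma)\sum_{i=1}^{m}\|T_{\mu_i}\|_{\mathcal{L}(\apa)}.
\end{align*}
Since $\beta_{p,\alpha}(\sigma)\to 0$ as $\sigma\to\infty$ and $S$ (hence the finite sum on the right) is fixed, we can choose $\sigma=\sigma(S,\epsilon)\geq 1$ large enough that the right-hand side is smaller than $\epsilon$. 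The main obstacle is purely bookkeeping: the summands have products of different lengths, so we cannot blindly quote the previous lemma with a single $k$. The padding device (or equivalently, skipping intermediate levels of the covering) resolves this by letting us use one covering tailored to $k=\max k_i$ uniformly across all summands, and the Carleson hypothesis on each $|\mu_i|$ is precisely what the previous lemma requires to produce the $\beta_{p,\alpha}(\sigma)\|T_{\mu_i}\|$ decay.
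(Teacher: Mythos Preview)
Your proof is correct, but it takes a different route from the paper's. The paper decomposes each matrix--valued measure entrywise as $\mu_i=\sum_{j,k}\langle\mu_i e_j,e_k\rangle E_{j,k}$, notes via Corollary~\ref{cor:eleCar} that every scalar entry has Carleson variation, and then cites the scalar--valued analogue (Lemma~3.5 of \cite{MSW}) componentwise before reassembling with the triangle inequality. You instead apply the matrix--valued single--product lemma (the one immediately preceding this statement) directly to each summand, using the padding $T_{I_d}=\mathrm{Id}_{\apa}$ (or equivalently, skipping intermediate levels of the nested covering) to force every summand onto the common covering indexed by $k=\max_i k_i$. Your route is the more natural one given that the matrix--valued single--product estimate has just been established; the paper's route keeps with its recurring strategy of reducing to the scalar theory. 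Both arguments end in the same triangle--inequality sum over $i$ and the same choice of $\sigma(S,\epsilon)$.
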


\begin{proof}
Each $\mu_i$ is a matrix--valued measure and by Corollary \ref{cor:eleCar}, the 
total variation of each entry of $\mu_{i}$ is a scalar Carleson measure. 
We will use this fact to prove the present claim. Indeed, we can write each
$\mu_i$ as \begin{math}\sum_{j=1}^{d} \sum_{k=1}^{d}\ip{\mu_{i}e_{j}}
{e_{k}}E_{j,k}\end{math}. We now apply the scalar-valued version of this Lemma, 
which is Lemma 3.5 in \cite{MSW}, to each \begin{math}\ip{\mu_{i}e_{j}}
{e_{k}}E_{j,k}\end{math}. We then use linearity and the triangle inequality to 
conclude the result.
\end{proof}

We are finally ready to prove Theorem \ref{thm:mainSec3}.

\begin{proof}
If $S\in\ft_{p,\alpha}$ then we can find a \begin{math}S_{0}=
\sum_{i=1}^{m}\Pi_{l=1}^{k_{i}}T_{a_{l}^{i}}\end{math} 
such that
\begin{align} \label{est:main31}
\left\|S-S_{0}\right\|_{\fl(\apa)}<\epsilon.
\end{align}
We also know, by Lemma \ref{lem:msw3.6},
we can pick $\sigma=\sigma(S_{0},\epsilon)$ and sets $F_{j}=F_{0,j}$ and 
$G_{j}=F_{k+1,j}$ with 
\begin{align*}
\left\|S_{0}T_{\mu}-\sum_{j=1}^{+\infty}M_{1_{F_{j}}}S_{0}
T_{\mu 1_{G_{j}}}\right\|_{\fl(\apa,\lpa)}<\epsilon.
\end{align*}
We know that (i)-(iv) of Theorem \ref{thm:mainSec3} are satisfied by Lemma 
\ref{lem:mswLem2.6}. Note that by the 
triangle inequality, there holds:
\begin{align}\label{est:main32}
\norm{ST_{\mu}-\sum_{j=1}^{+\infty}M_{1_{F_{j}}}
ST_{\mu 1_{G_{j}}}}_{\fl(\apa,\lpa)} 
&\leq\left\|ST_{\mu}-S_{0}T_{\mu}\right\|_{\fl(\apa,\lpa)} 
\\&+
\left\|S_{0}T_{\mu}-\sum_{j=1}^{+\infty}M_{1_{F_{j}}}S_{0}
T_{\mu 1_{G_{j}}}\right\|_{\fl(\apa,\lpa)} 
\\&+
\left\|\sum_{j=1}^{+\infty}M_{1_{F_{j}}}S_{0}T_{\mu 1_{G_{j}}}-
\sum_{j=1}^{+\infty}M_{1_{F_{j}}}S_{0}
T_{\mu 1_{G_{j}}}\right\|_{\fl(\apa,\lpa)}. 
\end{align}
The first two terms are less than $\epsilon$. To control the third term, let 
$f\in\apa$ and recall that the sequence of sets
$\{F_{j}\}_{j=1}^{\infty}$ is disjoint. Then we have
\begin{align*}
\left\|\sum_{j=1}^{\infty}M_{1_{F_{j}}}(S-S_{0})
T_{\mu 1_{G_{j}}}f\right\|_{\lpa}^{p} &=
\sum_{j=1}^{\infty}\left\|M_{1_{F_{j}}}(S-S_{0})
T_{\mu 1_{G_{j}}}f\right\|_{\lpa}^{p}
\\&\leq \epsilon^{p}\sum_{j=1}^{\infty}\left\|
T_{\mu 1_{G_{j}}}f\right\|_{\apa}^{p}
\\&\leq \epsilon^{p}\sum_{j=1}^{\infty}\left\|
T_{\mu 1_{G_{j}}}f\right\|_{\apa}^{p}
\\&\leq \epsilon^{p}\sum_{j=1}^{\infty}\left\|1_{G_{j}}f
\right\|_{A^{p}_{\alpha}}
\\&\leq N\epsilon^{p}\left\|
T_{\mu}\right\|_{\fl(\apa,\lpa)}\left\|f\right\|_{\apa}^{p}.
\end{align*}
Putting together this estimate and estimates 
\eqref{est:main31} and \eqref{est:main32}, 
Theorem \ref{thm:mainSec3} is proven. 
\end{proof}

\section{A Uniform Algebra and its Maximal ideal Space}
Consider the algebra $\mathcal{A}$ of all scalar-valued bounded uniformly 
continuous functions from the metric space $(\Bn,\rho)$ into 
$(\mathbb{C},|\cdot|)$. Furthermore, let $M_{\mathcal{A}}$ be the maximal ideal
space of $\mathcal{A}$. That is, $M_{\mathcal{A}}$ consistists of the multiplicitaive 
linear functionals on $\mathcal{A}$. In \cite{MSW},
the authors prove that if $\mu$ is a complex-valued measure whose variation is 
Carleson, then there is a sequence of functions $B_{k}(\mu)\in \mathcal{A}$ such
that $T_{B_{k}(\mu)} \to T_{\mu}$ in the $\mathcal{L}(\apa(\Bn;\C))$ norm
(see also \cite{Sua}). We 
will prove a natural generalization to the current case 
of matrix-valued measures. In particular, the following holds:

\begin{thm}\label{thm:msw4.7}
Let $1<p<\infty$, $-1<\alpha$, and $\mu$ be a matrix--valued measure such that 
$|\mu|$ is $\apa$-Carleson. Then there is a sequence of matrix-valued measures 
$B_{k}(\mu)$ such that there holds 
$\Langle B_{k}(\mu)e_i, e_j\Rangle_{A_{\alpha}^{2}}\in \mathcal{A}$ and  
$T_{B_{k}(\mu)}\to T_{\mu}$ in  
$\mathcal{L}(\apa)$ norm.
\end{thm}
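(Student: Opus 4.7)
The plan is to reduce the matrix-valued statement to the scalar-valued version already proved in \cite{MSW} by working entry-by-entry. Write
\begin{align*}
d\mu(z)=\sum_{i,j=1}^{d}d\mu_{i,j}(z)\,E_{i,j},
\end{align*}
where $\mu_{i,j}$ denotes the scalar complex measure sitting in the $(i,j)$-slot. Since $\abs{\mu}$ is $\apa$-Carleson by hypothesis, Corollary \ref{cor:eleCar} immediately yields that $\abs{\mu_{i,j}}$ is $\apa$-Carleson for every pair $(i,j)$.

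For each such pair I would invoke the scalar version of the theorem from \cite{MSW}, producing a sequence of scalar-valued functions $b^{i,j}_{k}\in\mathcal{A}$ with the property that $T_{b^{i,j}_{k}}\to T_{\mu_{i,j}}$ in $\fl(\apa(\Bn;\C))$ as $k\to\infty$. I would then assemble these into a single matrix-valued symbol by setting
\begin{align*}
B_{k}(\mu)(z):=\sum_{i,j=1}^{d}b^{i,j}_{k}(z)\,E_{i,j},
\end{align*}
so that $\ip{B_{k}(\mu)(z)e_{j}}{e_{i}}_{\Cd}=b^{i,j}_{k}(z)\in\mathcal{A}$, which is the first conclusion (the associated matrix-valued measure being $B_{k}(\mu)\,dv_{\alpha}$).

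The operator-norm convergence then follows from unpacking the vector-valued action of $T_{\mu}$ component-wise. For $f=\sum_{m=1}^{d}f_{m}e_{m}\in\apa$, the definition of $T_{\mu}$ gives
\begin{align*}
\ip{T_{\mu}f(z)}{e_{l}}_{\Cd}=\sum_{m=1}^{d}T_{\mu_{l,m}}f_{m}(z),
\end{align*}
and analogously for $T_{B_{k}(\mu)}$. The difference $T_{\mu}-T_{B_{k}(\mu)}$ therefore acts as the $d\times d$ array of scalar operator differences $\bigl[T_{\mu_{l,m}}-T_{b^{l,m}_{k}}\bigr]_{l,m}$, each of which tends to zero in $\fl(\apa(\Bn;\C))$-norm by construction. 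Using the finiteness of $d$, the equivalence of norms on $\Cd$, and the obvious coordinate bound $\norm{f_{m}}_{\apa(\Bn;\C)}\leq\norm{f}_{\apa}$, the triangle inequality over the $d^{2}$ entries produces $\norm{T_{\mu}-T_{B_{k}(\mu)}}_{\fl(\apa)}\to 0$.

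The only real step is the component-by-component unpacking of $T_{\mu}$, which is essentially bookkeeping: once one writes $d\mu=\sum_{i,j}d\mu_{i,j}E_{i,j}$ inside the defining integral and notes that $E_{i,j}e_{m}=\delta_{j,m}e_{i}$, the matrix-of-scalars picture is immediate. No new integral or geometric estimates beyond those of \cite{MSW} are required, so I do not anticipate a serious obstacle; the content of the theorem is really just that the scalar approximation procedure respects the finite-dimensional vector-valued framework.
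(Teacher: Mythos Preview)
Your proposal is correct and follows essentially the same approach as the paper: decompose $\mu$ entry-by-entry, invoke Corollary~\ref{cor:eleCar} to see each $\abs{\mu_{i,j}}$ is Carleson, apply the scalar result from \cite{MSW} to each entry, reassemble into a matrix symbol, and control the difference by the triangle inequality over the $d^2$ entries. The paper packages the last step slightly differently, noting directly that $\norm{T_{\phi E_{(i,j)}}}_{\mathcal{L}(\apa)}=\norm{T_{\phi}}_{\mathcal{L}(\apa(\Bn;\C))}$, but this is equivalent to your coordinate-wise bookkeeping.
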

\begin{rem}
The condition $\Langle B_{k}(\mu)e_i,e_j\Rangle_{A_{\alpha}^{2}}\in \mathcal{A}$
means that every entry of $B_{k}(\mu)$ is in $\mathcal{A}$.
\end{rem}
\begin{proof}
By Corollary \ref{cor:eleCar}, $|\mu_{(i,j)}|$ is a Carleson measure. By
\cite{MSW}*{Theorem 4.7} there exist functions $B_{k}(\mu_{(i,j)})$ in 
$\mathcal{A}$ such that
\begin{align}
\left\|T_{B_{k}(\mu_{(i,j)})} - 
T_{\mu_{(i,j)}}\right\|_{\mathcal{L}(\apa(\Bn;\C))}
\to 0. 
\label{eqn:maxin}
\end{align}

Let 
\begin{math}B_{k}(\mu)=\sum_{i=1}^{d}\sum_{j=1}^{d}B_{k}
(\mu_{(i,j)})E_{(i,j)}\end{math}. Then there holds:
\begin{align*}
\left\|T_{B_{k}(\mu)}-T_{\mu}\right\|_{\mathcal{L}(A_{\alpha}^{p})} &= 
\left\|\sum_{i=1}^{d}\sum_{j=1}^{d}T_{B_{k}(\mu_{(i,j)})E_{(i,j)}}-
\sum_{i=1}^{d}
    \sum_{j=1}^{d}T_{\mu_{(i,j)}E_{(i,j)}}\right\|_{\mathcal{L}(\apa)}
\\&= \left\|\sum_{i=1}^{d}\sum_{j=1}^{d}T_{B_{k}(\mu_{(i,j)})E_{(i,j)}}-
T_{\mu_{(i,j)}E_{(i,j)}}\right\|_{\mathcal{L}(\apa)}
\\&\leq \sum_{i=1}^{d}\sum_{j=1}^{d}\left\|T_{B_{k}(\mu_{(i,j)})E_{(i,j)}}-
T_{\mu_{(i,j)}E_{(i,j)}}\right\|_{\mathcal{L}(\apa)}
\\&= \sum_{i=1}^{d}\sum_{j=1}^{d}\left\|T_{(B_{k}(\mu_{(i,j)})-
\mu_{(i,j)}))E_{(i,j)}}\right\|_{\mathcal{L}(\apa)}
\\&= \sum_{i=1}^{d}\sum_{j=1}^{d}\left\|T_{(B_{k}(\mu_{(i,j)})-
\mu_{(i,j)}))}\right\|_{\mathcal{L}(\apa(\Bn;\C))}.
\end{align*}
This quantity goes to zero as $k\to\infty$ by (\ref{eqn:maxin}). Note that in 
the above we used the fact that $\left\|T_{\phi 
E_{(i,j)}}\right\|_{\mathcal{L}
(\apa)} = \left\|T_{\phi}\right\|_{\mathcal{L}(\apa(\Bn;\C))}$, which
is easy to see. Indeed, if $f\in\mathcal{L}(\apa)$, then 
$T_{\phi E_{(i,j)}}f=\ip{P\phi f}{e_i}_{\Cd}e_j=T_{\phi}(\ip{f}{e_i}_{\Cd})e_j$.

\end{proof}
Let $\mathcal{A}_{d}$ be the set of $d\times d$ matrices with entries in 
$\mathcal{A}$. Theorem \ref{thm:msw4.7} implies the following Theorem:
\begin{thm}
The Toeplitz Algebra $\mathcal{T}_{p,\alpha}$ equals the closed algebra 
generated by $\{T_{a}:a\in\mathcal{A}_{d}\}$.
\end{thm}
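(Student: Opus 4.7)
The plan is to prove the two set inclusions separately, treating one as immediate and the other as the substantive direction supported by Theorem \ref{thm:msw4.7}.

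For the inclusion of the closed algebra generated by $\{T_a : a \in \mathcal{A}_d\}$ into $\mathcal{T}_{p,\alpha}$, the observation is simply that $\mathcal{A}_d \subset L_{M^d}^{\infty}$, because $\mathcal{A}$ is by definition contained in $L^{\infty}(\Bn;\mathbb{C})$. Consequently every Toeplitz operator $T_a$ with $a \in \mathcal{A}_d$ is already among the generating Toeplitz operators of $\mathcal{T}_{p,\alpha}$, and since $\mathcal{T}_{p,\alpha}$ is by definition a norm-closed algebra, it contains the closed algebra generated by this subfamily.

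For the reverse inclusion, I would begin with an arbitrary finite product of Toeplitz operators $T_{\varphi_1} \cdots T_{\varphi_k}$ with $\varphi_i \in L_{M^d}^{\infty}$ and approximate it by a product of Toeplitz operators with $\mathcal{A}_d$ symbols. The single-factor approximation goes as follows: identify $T_{\varphi}$ with $T_{\mu}$, where $\mu$ is the matrix-valued measure $d\mu(z) = \varphi(z)\,dv_\alpha(z)$, and observe that its scalar total variation satisfies $d|\mu|(z) \simeq \|\varphi(z)\|\,dv_\alpha(z)$. This is an $\apa$-Carleson measure, since $\|\varphi\|_{\infty}$ is finite and $dv_\alpha$ satisfies the reproducing-kernel bound of Corollary \ref{cor:car1} uniformly in $\lambda$. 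Theorem \ref{thm:msw4.7} then produces matrix-valued measures $B_k(\mu)$ with entries in $\mathcal{A}$ (that is, $B_k(\mu) \in \mathcal{A}_d$) with $T_{B_k(\mu)} \to T_{\varphi}$ in $\mathcal{L}(\apa)$ norm.

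The passage from single-factor to multi-factor approximation is handled by the standard multilinear continuity estimate $\|A_n B_n - AB\| \leq \|A_n\|\|B_n - B\| + \|A_n - A\|\|B\|$, iterated and combined with the uniform operator-norm boundedness of the approximating sequences to yield $T_{a_1^{(k)}} \cdots T_{a_m^{(k)}} \to T_{\varphi_1} \cdots T_{\varphi_m}$ in $\mathcal{L}(\apa)$. Finite sums pose no additional difficulty, and taking the operator-norm closure of both sides completes the inclusion. The genuine technical content has already been absorbed into Theorem \ref{thm:msw4.7}; the remaining work for the present theorem is essentially bookkeeping together with the product-continuity observation, which is the only mild obstacle one has to check.
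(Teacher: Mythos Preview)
Your proposal is correct and follows exactly the route the paper intends: the paper states this theorem immediately after Theorem~\ref{thm:msw4.7} with only the remark that the latter ``implies'' it, leaving the bookkeeping (the trivial inclusion, the identification $T_{\varphi}=T_{\varphi\,dv_\alpha}$, the Carleson check, and the product--continuity argument) to the reader. You have simply written out those omitted details, and they are all in order.
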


We collect some results about $\mathcal{A}$ and $M_{\mathcal{A}}$. Their proofs 
can be found in, for example, \cite{Sua} and \cite{MSW}.

\begin{lem}
\label{invariance}
Let $z,w,\xi\in\Bn$.  Then there is a positive constant that depends 
only on $n$ such that
$$
\rho(\varphi_z(\xi),\varphi_w(\xi))\lesssim \frac{\rho(z,w)}{1-\abs{\xi}^2}.
$$
\end{lem}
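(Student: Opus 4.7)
The plan is to convert the question about two different automorphisms applied to the same point into a question about the displacement of $\xi$ under a single automorphism that is close to the identity, and then evaluate that displacement using the standard transformation identities for $\varphi_a$ on $\Bn$.

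First, dispose of the trivial regime: since $\rho$ takes values in $[0,1)$, we have $\rho(\varphi_z(\xi),\varphi_w(\xi))\leq 1$, so the claimed inequality is automatic whenever $\rho(z,w)\gtrsim 1-\abs{\xi}^2$. It therefore suffices to treat the complementary case $\rho(z,w)\ll 1-\abs{\xi}^2$; this smallness, together with Proposition~\ref{prop:soh2.20}, then yields the comparabilities $1-\abs{z}^2\simeq 1-\abs{w}^2\simeq\abs{1-\ip{z}{w}}$ and $\abs{1-\ip{\xi}{z}}\simeq\abs{1-\ip{\xi}{w}}$, which I will use below without comment.

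Next, using that $\varphi_w$ is an involution and an isometry of $\rho$, rewrite
\[
\rho(\varphi_z(\xi),\varphi_w(\xi))=\rho(\varphi_w(\varphi_z(\xi)),\xi),
\]
so the problem reduces to estimating the displacement of $\xi$ under the automorphism $\Phi:=\varphi_w\circ\varphi_z$ (which is the identity when $z=w$). Two iterations of the fundamental identity
\[
1-\abs{\varphi_a(b)}^2=\frac{(1-\abs{a}^2)(1-\abs{b}^2)}{\abs{1-\ip{b}{a}}^2}
\]
give
\[
1-\abs{\Phi(\xi)}^2=\frac{(1-\abs{w}^2)(1-\abs{z}^2)(1-\abs{\xi}^2)}{\abs{1-\ip{\xi}{z}}^2\,\abs{1-\ip{\varphi_z(\xi)}{w}}^2}.
\]
To compute the inner product $\ip{\xi}{\Phi(\xi)}$, I would write $\xi=\varphi_w(\varphi_w(\xi))$ and apply the companion identity
\[
1-\ip{\varphi_a(u)}{\varphi_a(v)}=\frac{(1-\abs{a}^2)\bigl(1-\ip{u}{v}\bigr)}{(1-\ip{u}{a})(1-\ip{a}{v})},
\]
iteratively reducing everything to the scalar quantities $1-\abs{z}^2$, $1-\abs{w}^2$, $1-\abs{\xi}^2$, $1-\ip{z}{w}$, $1-\ip{\xi}{z}$, $1-\ip{\xi}{w}$. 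Substituting into
\[
\rho(\xi,\Phi(\xi))^{2}=1-\frac{(1-\abs{\xi}^2)(1-\abs{\Phi(\xi)}^2)}{\abs{1-\ip{\xi}{\Phi(\xi)}}^{2}}
\]
and clearing denominators produces a rational expression whose numerator is proportional to $\abs{1-\ip{z}{w}}^{2}-(1-\abs{z}^2)(1-\abs{w}^2)=\abs{1-\ip{z}{w}}^{2}\rho(z,w)^{2}$ (hence vanishing on the diagonal $z=w$) and whose denominator, in the regime $\rho(z,w)\ll 1-\abs{\xi}^2$ and using the comparabilities above, is bounded below by a constant times $(1-\abs{\xi}^2)^{2}\abs{1-\ip{z}{w}}^{2}$. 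Taking square roots yields the desired estimate.

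The main obstacle is the algebraic simplification of $\ip{\xi}{\Phi(\xi)}$, which requires careful composition of several instances of the two identities above and bookkeeping of the cancellations that produce the factor $(1-\abs{\xi}^2)$ in the denominator. To streamline the computation, one can reduce to the complex-linear span of $\{z,w,\xi\}$ (dimension at most three) and, after a unitary change of coordinates, express the entire quantity as a rational function of only the finitely many scalar inner products listed above; the resulting algebraic inequality essentially mirrors the one-variable case, where the required estimate $\abs{\varphi_w(\varphi_z(\xi))-\xi}\lesssim \abs{z-w}/(1-\abs{\xi}^2)$ follows by a direct calculation of $\varphi_w\circ\varphi_z$ in the disc.
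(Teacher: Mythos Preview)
The paper does not prove this lemma; it states the result and defers to \cite{Sua} and \cite{MSW}. So there is no in-paper proof to compare against, and your proposal has to be judged on its own merits.

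Your outline has the right architecture: the reduction to the regime $\rho(z,w)\lesssim 1-\abs{\xi}^{2}$, the invariance step $\rho(\varphi_z(\xi),\varphi_w(\xi))=\rho(\xi,\Phi(\xi))$ with $\Phi=\varphi_w\circ\varphi_z$, and the Gram-matrix observation that the answer depends only on the pairwise inner products of $z,w,\xi$ are all correct. The gap is that the decisive step is only asserted. You claim that after clearing denominators the numerator carries a factor $\abs{1-\ip{z}{w}}^{2}\rho(z,w)^{2}$ and the denominator is $\gtrsim(1-\abs{\xi}^{2})^{2}\abs{1-\ip{z}{w}}^{2}$, but the mechanism you propose for seeing this---iterating the identity for $1-\ip{\varphi_a(u)}{\varphi_a(v)}$---does not terminate in the six scalars you list: one pass leaves $1-\ip{\varphi_w(\xi)}{\varphi_z(\xi)}$, and a second pass introduces quantities such as $1-\ip{\varphi_z(w)}{\xi}$ or $1-\ip{\varphi_z(\varphi_w(\xi))}{\xi}$, which still require the explicit rational formula for $\varphi_a$ on $\Bn$ to evaluate. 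Your one-dimensional endpoint is also phrased in Euclidean distances $\abs{z-w}$ and $\abs{\Phi(\xi)-\xi}$, and converting these to the pseudohyperbolic quantities $\rho(z,w)$ and $\rho(\xi,\Phi(\xi))$ needs an additional comparison of $\abs{1-\overline{\xi}\,\Phi(\xi)}$ with $\abs{1-\overline{z}w}$ that you have not supplied. The upshot is that the substantive computation---the algebra producing the claimed factorisation---remains undone, so the proposal is an outline rather than a proof.
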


\begin{lem}
\label{Extend}
Let $(E,d)$ be a metric space and $f:\Bn\to E$ be a continuous map.  Then $f$ 
admits a 
continuous extension from $M_{\mathcal{A}}$ into $E$ if and only if $f$ is $
(\rho,d)$ 
uniformly continuous  and $\overline{f(\Bn)}$ is compact.
\end{lem}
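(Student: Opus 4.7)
The plan is to split the lemma into its two directions and reduce the sufficiency part to the already understood scalar case by embedding $\overline{f(\Bn)}$ into a Hilbert cube.

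For the necessity direction, suppose a continuous extension $\tilde f : M_{\mathcal{A}} \to E$ exists. Since $M_{\mathcal{A}}$ is compact (Gelfand theory for the commutative $C^{*}$-algebra $\mathcal{A}$) and $\Bn$ embeds as a dense subspace via point evaluations, $\tilde f(M_{\mathcal{A}})$ is compact and contains $f(\Bn)$ as a dense subset, so $\overline{f(\Bn)} = \tilde f(M_{\mathcal{A}})$ is compact. For uniform continuity, argue by contradiction: if $f$ fails to be $(\rho,d)$-uniformly continuous, pick $\varepsilon>0$ and sequences $\{z_m\},\{w_m\}\subset\Bn$ with $\rho(z_m,w_m)\to 0$ but $d(f(z_m),f(w_m))\geq\varepsilon$. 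Pass to subnets so $z_m\to\zeta$ and $w_m\to\eta$ in the compact space $M_{\mathcal{A}}$. For every $g\in\mathcal{A}$, uniform $\rho$-continuity of $g$ gives $|g(z_m)-g(w_m)|\to 0$, hence $\zeta(g)=\eta(g)$; since $\mathcal{A}$ separates multiplicative functionals on $M_{\mathcal{A}}$, $\zeta=\eta$. Continuity of $\tilde f$ then forces $d(f(z_m),f(w_m))\to 0$, contradicting the choice of the sequences.

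For the sufficiency direction, write $K=\overline{f(\Bn)}$ and assume $K$ is compact and $f$ is $(\rho,d)$-uniformly continuous. Since $K$ is a compact metric space, it is second countable, so one can pick a countable family $\{g_k\}_{k\ge 1}$ of continuous functions $g_k : K\to[0,1]$ that separates the points of $K$; the resulting map
\begin{equation*}
\iota : K \longrightarrow [0,1]^{\N}, \qquad \iota(y) = \bigl(g_k(y)\bigr)_{k\ge 1},
\end{equation*}
is a homeomorphism onto its (closed) image because $K$ is compact Hausdorff. Each composition $h_k := g_k\circ f : \Bn\to[0,1]$ is bounded and $(\rho,|\cdot|)$-uniformly continuous (the $g_k$ are uniformly continuous on the compact set $K$, and $f$ is uniformly continuous by assumption), so $h_k\in\mathcal{A}$. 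By Gelfand theory each $h_k$ extends to a continuous function $\tilde h_k : M_{\mathcal{A}}\to[0,1]$, and these assemble into a continuous map
\begin{equation*}
\Phi : M_{\mathcal{A}} \longrightarrow [0,1]^{\N}, \qquad \Phi(\zeta) = \bigl(\tilde h_k(\zeta)\bigr)_{k\ge 1}.
\end{equation*}
On $\Bn$ we have $\Phi = \iota\circ f$, so $\Phi(\Bn)\subset \iota(K)$; since $\iota(K)$ is closed in $[0,1]^{\N}$ and $\Bn$ is dense in $M_{\mathcal{A}}$, $\Phi(M_{\mathcal{A}})\subset\iota(K)$. Define
\begin{equation*}
\tilde f := \iota^{-1}\circ\Phi : M_{\mathcal{A}} \longrightarrow K\subset E.
\end{equation*}
This is continuous as a composition of continuous maps, and $\tilde f|_{\Bn} = \iota^{-1}\circ\iota\circ f = f$, giving the required extension.

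The conceptually delicate step is the sufficiency direction: $\mathcal{A}$ only directly controls \emph{scalar} extensions, while the lemma asks to extend a map into an arbitrary metric space. The Hilbert-cube embedding $\iota$ is the mechanism that reduces the vector-valued extension problem to countably many scalar extension problems, and the key identification $\Phi(M_{\mathcal{A}})\subset\iota(K)$ relies crucially on the density of $\Bn$ in $M_{\mathcal{A}}$ together with the compactness (hence closedness) of $\iota(K)$. The rest of the argument is standard topology plus the Gelfand correspondence already used elsewhere in the paper.
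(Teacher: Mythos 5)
Your proof is correct. Note that the paper itself does not prove this lemma --- it is quoted from \cite{Sua} and \cite{MSW} --- so the relevant comparison is with the argument there. The necessity direction you give (compactness of $\tilde f(M_{\mathcal{A}})$, plus the contradiction argument identifying the two subnet limits $\zeta=\eta$ because every $a\in\mathcal{A}$ is $\rho$-uniformly continuous) is essentially the standard one. For sufficiency, the cited proof proceeds pointwise: for a net $z_\omega\to x$ one extracts a convergent subnet of $f(z_\omega)$ in the compact set $\overline{f(\Bn)}$ and shows the limit is independent of the net by observing that the scalar functions $z\mapsto d(f(z),e)$ belong to $\mathcal{A}$ and hence already extend; this defines $\tilde f(x)$ directly, after which continuity is checked. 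Your route instead embeds $K=\overline{f(\Bn)}$ into the Hilbert cube by a countable separating family, extends each coordinate $g_k\circ f\in\mathcal{A}$ at once via the Gelfand transform, and pulls back through $\iota^{-1}$; the density of $\Bn$ in $M_{\mathcal{A}}$ together with closedness of $\iota(K)$ guarantees the assembled map lands in $\iota(K)$. Both arguments reduce the $E$-valued extension problem to scalar extensions supplied by $\mathcal{A}\cong C(M_{\mathcal{A}})$; yours trades the net-by-net well-definedness check for a single global construction, at the cost of invoking the metrizable-compact embedding theorem. The only facts you use without proof --- that $\mathcal{A}$ is a unital commutative $C^{*}$-algebra and that $\Bn$ is dense in $M_{\mathcal{A}}$ --- are standard and are used freely throughout the paper, so there is no gap.
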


\begin{lem}
\label{Maximal3}
Let $\{z_\alpha\}$ be a net in $\Bn$ converging to $x\in M_{\mathcal{A}}$.  Then
\begin{description}
\item[(i)] $a\circ\varphi_x\in\mathcal{A}$ for every $a\in\mathcal{A}$.  In 
particular, 
$\varphi_x:\Bn\to M_{\mathcal{A}}$ is continuous;
\item[(ii)] $a\circ\varphi_{z_{\omega}}\to a\circ\varphi_x$ uniformly on compact
sets of
$\Bn$ for every $a\in\mathcal{A}$.
\end{description}
\end{lem}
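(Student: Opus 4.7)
The plan is to introduce, for each fixed $\xi\in\Bn$, the auxiliary map $\Psi_\xi\colon \Bn\to\Bn$ defined by $\Psi_\xi(z):=\varphi_z(\xi)$, and first verify that $a\circ\Psi_\xi\in\mathcal{A}$ for every $a\in\mathcal{A}$. Lemma \ref{invariance} gives $\rho(\varphi_z(\xi),\varphi_w(\xi))\lesssim \rho(z,w)/(1-|\xi|^2)$, so $\Psi_\xi$ is $(\rho,\rho)$-Lipschitz (with $\xi$-dependent constant), and composing with the $\rho$-uniformly continuous bounded function $a$ keeps us in $\mathcal{A}$. This allows us to \emph{define} $\varphi_x(\xi)$ as the multiplicative linear functional $a\mapsto x(a\circ\Psi_\xi)$; multiplicativity is inherited from $x$ since $(ab)\circ\Psi_\xi=(a\circ\Psi_\xi)(b\circ\Psi_\xi)$, so $\varphi_x(\xi)\in M_\mathcal{A}$, and by Gelfand-topology convergence $a(\varphi_{z_\alpha}(\xi))=(a\circ\Psi_\xi)(z_\alpha)\to x(a\circ\Psi_\xi)=a(\varphi_x(\xi))$.

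The main leverage for part (i) is the fact that each automorphism $\varphi_z$ is an isometry for the pseudo-hyperbolic metric, i.e.\ $\rho(\varphi_z(\xi_1),\varphi_z(\xi_2))=\rho(\xi_1,\xi_2)$. Letting $\omega_a$ denote the modulus of continuity of $a$, for any two points $\xi_1,\xi_2\in\Bn$ we estimate
\begin{align*}
|a(\varphi_x(\xi_1))-a(\varphi_x(\xi_2))|
   &= |x(a\circ\Psi_{\xi_1}-a\circ\Psi_{\xi_2})|
   \leq \|a\circ\Psi_{\xi_1}-a\circ\Psi_{\xi_2}\|_\infty \\
   &= \sup_{z}|a(\varphi_z(\xi_1))-a(\varphi_z(\xi_2))|
   \leq \omega_a(\rho(\xi_1,\xi_2)),
\end{align*}
and $|a\circ\varphi_x(\xi)|\le\|a\|_\infty$ is immediate. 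Thus $a\circ\varphi_x\in\mathcal{A}$. The continuity of $\varphi_x\colon\Bn\to M_\mathcal{A}$ is then free: the Gelfand topology on $M_\mathcal{A}$ is the weakest topology making the $a$'s continuous, and each $a\circ\varphi_x$ is $\rho$-continuous on $\Bn$.

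For (ii) the same isometry observation shows the family $\{a\circ\varphi_{z_\alpha}\}_\alpha$ is uniformly bounded by $\|a\|_\infty$ and uniformly $\rho$-equicontinuous with common modulus $\omega_a$. On a compact $K\subset\Bn$ the metrics $\rho$ and the Euclidean metric are comparable, so $\{a\circ\varphi_{z_\alpha}\}_\alpha$ is equicontinuous and bounded on $K$ in the Euclidean sense; by Arzel\`a--Ascoli it is relatively compact in $C(K)$. Combined with the pointwise convergence $a(\varphi_{z_\alpha}(\xi))\to a(\varphi_x(\xi))$ already established, every subnet has a uniformly convergent sub-subnet whose limit must coincide with the pointwise limit $a\circ\varphi_x$, forcing the whole net to converge uniformly on $K$.

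The one delicate point is conceptual rather than technical: one must make sense of $\varphi_x(\xi)$ for $x\in M_\mathcal{A}\setminus\Bn$, since the target space $(\Bn,\rho)$ is not precompact and Lemma \ref{Extend} does not apply directly to $\Psi_\xi$. The route above sidesteps this by defining $\varphi_x(\xi)$ as a functional on $\mathcal{A}$ rather than as a limit point of $\overline\Bn$; everything then reduces to the $\rho$-isometry property of Möbius automorphisms of the ball and Gelfand-theoretic continuity, neither of which requires extra work beyond the lemmas already in hand.
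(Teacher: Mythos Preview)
Your argument is correct. The paper does not supply its own proof of this lemma; it is stated among several results ``whose proofs can be found in, for example, \cite{Sua} and \cite{MSW},'' and your approach---defining $\varphi_x(\xi)$ as the multiplicative functional $a\mapsto x(a\circ\Psi_\xi)$ after using Lemma~\ref{invariance} to place $a\circ\Psi_\xi$ in $\mathcal{A}$, then exploiting the $\rho$-isometry of the M\"obius automorphisms for both the uniform continuity in (i) and the equicontinuity/Arzel\`a--Ascoli step in (ii)---is exactly the standard argument found in those references.
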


\subsection{Maps from $M_{\mathcal{A}}$ into $\mathcal{L}(\apa,\apa)$}
The following discussion is similar to the discussion in \cite{MSW}, and the 
proofs and 
``straightforward computations'' are almost exactly like the scalar-valued 
versions. One
important remark is that when using this strategy to
prove ``quantitative'' facts, we implicitly use the fact that $\Cd$ is a finite 
dimensional vector space and so we may ``pull out'' dimensional constants. As an
example, consider the next lemma, Lemma \ref{lem:uzpa}. First, a definition:
\begin{defn}
Define the operator, $U_{z}^{(p,\alpha)}:A_{\alpha}^{p}\to\apa$, by the 
following formula:
\begin{align}\label{eqn:defU}
U^{(p,\alpha)}_z f(w):= 
f(\varphi_z(w))\frac{(1-\abs{z}^2)^{\frac{n+1+\alpha}{p}}}
{(1-w\overline{z})^{\frac{2(n+1+\alpha)}{p}}}
\end{align}
where the argument of $(1-w\overline{z})$ is used to define the root appearing 
above.
\end{defn}
\begin{lem}\label{lem:uzpa}
There holds:
\begin{align*}
\norm{U^{(p,\alpha)}_z f}_{A^p_{\alpha}}=\norm{f}_{A^p_{\alpha}} 
\quad\forall f\in 
A^p_{\alpha},
\end{align*}
and $\,U^{(p,\alpha)}_zU^{(p,\alpha)}_z=Id_{A^p_\alpha}$.
\end{lem}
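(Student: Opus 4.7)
The plan is to verify the two claims by direct computation, reducing the vector--valued case to the scalar change--of--variables formula \eqref{eqn:cov} and the standard Möbius identity for $\varphi_z$.

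For the isometry, I would first pull the scalar factor out of the $\Cd$--norm to get
\begin{align*}
\norm{U_z^{(p,\alpha)}f}_{A^p_\alpha}^p
= \int_{\Bn} \norm{f(\varphi_z(w))}_p^{p}\,
\frac{(1-\abs{z}^2)^{n+1+\alpha}}{\abs{1-w\overline{z}}^{2(n+1+\alpha)}}\,dv_\alpha(w).
\end{align*}
The scalar factor is exactly $\abs{\nk{z}{2}(w)}^2$, so applying \eqref{eqn:cov} with the scalar function $g(w) := \norm{f(w)}_p^p$ yields $\int_{\Bn} \norm{f(w)}_p^p\,dv_\alpha(w) = \norm{f}_{A^p_\alpha}^p$. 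The absolute value in the change--of--variables formula makes the choice of root irrelevant for this step.

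For the involution, I would compute $U_z^{(p,\alpha)}U_z^{(p,\alpha)} f(w)$ directly. Substituting $\varphi_z(w)$ into the definition gives
\begin{align*}
U_z^{(p,\alpha)}U_z^{(p,\alpha)} f(w) = f(\varphi_z(\varphi_z(w)))\,
\frac{(1-\abs{z}^2)^{2(n+1+\alpha)/p}}
{\bigl[(1-\varphi_z(w)\overline{z})(1-w\overline{z})\bigr]^{2(n+1+\alpha)/p}}.
\end{align*}
Two facts are then needed: $\varphi_z\circ\varphi_z$ is the identity on $\Bn$, and the standard Möbius identity
\begin{align*}
(1-\langle \varphi_z(w),z\rangle)(1-\langle w,z\rangle) = 1-\abs{z}^2,
\end{align*}
both of which are recorded in \cite{Zhu}*{Ch.~1}. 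Together they collapse the scalar factor to $1$ and give $U_z^{(p,\alpha)}U_z^{(p,\alpha)} f = f$.

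The only subtlety is branch choice: the quantity $(1-w\overline{z})^{2(n+1+\alpha)/p}$ involves a fractional power, and we must be sure that the two scalar factors in the iterated computation combine cleanly. The real part of $1-w\overline{z}$ is positive on $\Bn$, so taking the principal branch throughout is unambiguous, and the Möbius identity above then gives a product of two such quantities whose principal branches multiply to $(1-\abs{z}^2)^{2(n+1+\alpha)/p}$, a positive real. Thus neither claim requires any machinery beyond \eqref{eqn:cov} and the Möbius calculus for $\varphi_z$; the vector--valuedness enters only through the trivial observation that scalar multipliers can be pulled out of $\norm{\cdot}_p$.
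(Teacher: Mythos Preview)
Your proof is correct and, for the isometry claim, follows exactly the same route as the paper: pull the scalar weight out of the $\Cd$--norm, recognize it as $\abs{k_z^{(2,\alpha)}}^2$, and apply the change--of--variables formula \eqref{eqn:cov}. You go further than the paper's written proof in two respects: the paper does not actually write out the verification of $U_z^{(p,\alpha)}U_z^{(p,\alpha)}=Id$, and it does not address the branch issue; your treatment of both is correct and more complete.
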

\begin{proof}
We will use the change of variables formula in Lemma \ref{eqn:cov}. 
There holds
\begin{align*}
\norm{U_{z}^{(p,\alpha)}f}_{\apa}^{p}&=\int_{\Bn}\norm{f(\varphi_z(w))
\frac{(1-\abs{z}^2)^{\frac{n+1+\alpha}{p}}}{(1-w\overline{z})
^{\frac{2(n+1+\alpha)}{p}}}}_{p}^{p}dv_{\alpha}(w)
\\&=\int_{\Bn}\norm{f(\varphi_z(w))}_{p}^{p}\abs{
\frac{(1-\abs{z}^2)^{n+1+\alpha{}}}
{(1-w\overline{z})^{2(n+1+\alpha){}}}}dv_{\alpha}(w)
\\&=\int_{\Bn}\norm{f(\varphi_{z}(w))}_{p}^{p}
\abs{k_{z}^{(2,\alpha)(w)}}^{2}dv_{\alpha}(z)
\\&=\int_{\Bn}\norm{f(w)}_{p}^{p}dv_{\alpha}(w).
\end{align*}
In the last equality, we used the change of variables formula and the fact that 
$\varphi_{z}$ is an 
involution. 
\end{proof}
There are several ways to justify the change of variables used in the last 
lemma. First, we could use the scalar-valued change of variables formula 
directly by appealing to the fact that 
\begin{math}w\mapsto\norm{f(w)}_{p}\end{math}
is in $L_{\alpha}^{p}(\Bn;\C)$. Secondly, we can use the change of variables 
formula for 
the scalar-valued case indirectly by first passing to the definition of the 
vector-valued integral, and then applying the change of variables on each 
summand in the definition. Either way works, and in what follows, the proofs for
the vector-valued theorems can be proven similarly. 
\\
Note that the operator $U_{z}^{(p,\alpha)}$ can be written in the form:
\begin{align*}
(U_{z}^{(p,\alpha)}f)(w)&=\sum_{k=1}^{d}\
\ip{(U_{z}^{(p,\alpha)}f)(w)}{e_{k}}_{\Cd}e_{k}
\\&=\sum_{k=1}^{d}\frac{(1-\abs{z}^2)^{\frac{n+1+\alpha}{p}}}
    {(1-w\overline{z})^{\frac{2(n+1+\alpha)}{p}}}
    \ip{f\circ\varphi (w)}{e_{k}}_{\Cd}e_{k}
\\&=\sum_{k=1}^{d} (U_{z}^{(p,\alpha)}\ip{f}{e_{k}}_{\Cd})(w)e_{k}.
\end{align*}
In the above \begin{math}\ip{f}{e_{k}}_{\Cd}(w)
=\ip{f(w)}{e_{k}}_{\Cd}\end{math}.

For a real number $r$, set
\begin{align*}
J^r_z(w)=\frac{(1-\abs{z}^2)^{r\frac{n+1+\alpha}{2}}}
{(1-w\overline{z})^{r(n+1+\alpha)}}.
\end{align*}
Let $I_d$ be the $d\times d$ identity matrix. Observe that
\begin{align*}
U^{(p,\alpha)}_z f(w)=
(J_z^{\frac{2}{p}}(w)I_d)f(\varphi_z(w))\quad\textnormal{ and }
\quad  U^{(p,\alpha)}_z= T_{J_z^{\frac{2}{p}-1}I_d}U^{(2,\alpha)}_z=
U^{(2,\alpha)}_z 
T_{J_z^{1-\frac{2}{p}}I_d}.
\end{align*}
So, if $q$ is the conjugate exponent of $p$, we have
$$
\left(U^{(q,\alpha)}_z\right)^{*}=U^{(2,\alpha)}_z 
T_{\overline{J_z}^{\frac{2}{q}-1}I_d}
=T_{\overline{J_z}^{1-\frac{2}{q}}I_d}U^{(2,\alpha)}_z.
$$
Then using that $U^{(2,\alpha)}_zU^{(2,\alpha)}_z=Id_{A^2_\alpha}$ and 
straightforward computations, we obtain
$$
\quad \left(U^{(q,\alpha)}_z\right)^{*}U^{(p,\alpha)}_z=T_{b_zI_d}
\quad\textnormal{ and } \quad 
U^{(p,\alpha)}_z\left(U^{(q,\alpha)}_z\right)^{*}=T_{b_zI_d}^{-1},
$$
where
\begin{equation}
\label{bofz}
b_z(w)=\frac{(1-\overline{w}z)^{(n+1+\alpha)\left(\frac{1}{q}-
\frac{1}{p}\right)}}
{(1-\overline{z}w)^{(n+1+\alpha)\left(\frac{1}{q}-\frac{1}{p}\right)}}.
\end{equation}

For $z\in\Bn$ and $S\in\mathcal{L}(A^p_{\alpha})$ we then define 
the map
$$
S_z :=U^{(p,\alpha)}_z S(U^{(q,\alpha)}_z)^{*},
$$
which induces a map $\Psi_S:\Bn\to \mathcal{L}(A^p_{\alpha}, A^p_{\alpha})$ 
given by 
$$\Psi_S(z)=S_z.$$  
We now show how to extend the map $\Psi_S$ continuously to a map from 
$M_\mathcal{A}$ to
$\mathcal{L}(A^p_{\alpha})$ when endowed with both the weak and 
strong operator topologies.

First, observe that $C(\overline{\Bn})\subset\mathcal{A}$ induces a natural 
projection 
$\pi:M_{\mathcal{A}}\to M_{C(\overline{\Bn})}$.  If $x\in M_{\mathcal{A}}$, let
\begin{equation}
\label{bofx}
b_x(w)=\frac{(1-\overline{w}\pi(x))^{(n+1+\alpha)\left(\frac{1}{q}-
\frac{1}{p}\right)}}
{(1-\overline{\pi(x)}w)^{(n+1+\alpha)\left(\frac{1}{q}-\frac{1}{p}\right)}}.
\end{equation}
So, when $z_\omega$ is a net in $\Bn$ that tends to $x\in M_{\mathcal{A}}$,
then $z_{\omega}=\pi(z_{\omega})\to \pi(x)$ in the Euclidean metric, and so we 
have 
$b_{z_{\omega}}\to b_x$
uniformly on compact sets of $\Bn$ and boundedly.  Furthermore,
$$
(U^{(q,\alpha)}_z)^{*}U^{(p,\alpha)}_z=T_{b_z I_d}\to T_{b_x I_d}
\ \textnormal{ and }\ (U^{(p,\alpha)}_z)^{*}  
U^{(q,\alpha)}_z=T_{\overline{b_z}I_d}\to 
T_{\overline{b_x}I_d},
$$
where convergence is in the strong operator topologies of 
$\mathcal{L}(A^p_{\alpha})$ and
$\mathcal{L}(A^q_{\alpha})$, respectively.  If $a\in\mathcal{A}$ then Lemma
\ref{Maximal3}
implies $a\circ \varphi_{z_{\omega}}\to a\circ\varphi_x$ uniformly on compact 
sets of $\Bn$. The above discussion implies that
$$
T_{(a\circ\varphi_{z_{\omega}} ) 
b_{z_{\omega}}I_d}\to T_{(a\circ\varphi_x)b_x I_d}
$$
in the strong operator topology associated with 
$\mathcal{L}(A^p_{\alpha})$.

Recall that we have $k_{z}^{(p,\alpha)}(w)e=
\frac{(1-\abs{z}^2)^{\frac{n+1+\alpha}{q}}}
{(1-\overline{z}w)^{n+1+\alpha}}e$,
with $\norm{k^{(p,\alpha)}_{z} e}_{\apa}\approx 1$, and so
$$
(1-\abs{\xi}^2)^{\frac{n+1+\alpha}{p}}J_z^{\frac{2}{p}}(\xi)e
=(1-\abs{\varphi_z(\xi)}^2)^{\frac{n+1+\alpha}{p}}
\frac{\abs{1-\overline{z}\xi}^{\frac{2}{p}(n+1+\alpha)}}
{(1-\xi\overline{z})^{\frac{2}{p}(n+1+\alpha)}}e
=(1-\abs{\varphi_z(\xi)}^2)^{\frac{n+1+\alpha}{p}}\lambda_{(p,\alpha)}(\xi,z)e.
$$
Here the constant $\lambda_{(p,\alpha)}$ is unimodular, and
\begin{align*}
\ip{f}{\left(U^{(p,\alpha)}_z\right)^* k_\xi^{(q,\alpha)}e}_{A^2_{\alpha}} 
&=\ip{U^{(p,\alpha)}_zf}{ k_\xi^{(q,\alpha)}e}_{A^2_{\alpha}}
\\&=\ip{J_z^{\frac{2}{p}} 
    (f\circ\varphi_z)}{ k_\xi^{(q,\alpha)}e}_{A^2_{\alpha}}
\\&=\ip{J_z^{\frac{2}{p}} (f\circ\varphi_z) 
    \overline{k_\xi^{(q,\alpha)}}}{e}_{A^2_{\alpha}}
\\&=\ip{J_z^{\frac{2}{p}}(\xi)f(\varphi_z(\xi))(1-\abs{\xi}^2)^{\frac{n+1+\alpha}{p}}}
    {e}_{\Cd}
\\&=\ip{f(\varphi_z(\xi))(1-\abs{\varphi_z(\xi)}^2)^{\frac{n+1+\alpha}{p}}
    \lambda_{(p,\alpha)}(\xi,z)}{e}_{\Cd}
\\&=\ip{f}{\overline{\lambda_{(p,\alpha)}(\xi,z)} 
    k_{\varphi_z(\xi)}^{(q,\alpha)}e}_{A^2_{\alpha}}.
\end{align*}
This computation yields
\begin{equation}
\label{Comp}
\left(U^{(p,\alpha)}_z\right)^* k_\xi^{(q,\alpha)}e
=\lambda_{(p,\alpha)}(\xi,z)k_{\varphi_z(\xi)}^{(q,\alpha)}e.
\end{equation}
We use these computations to study the continuity of the above map as a function
of $z$.
\begin{lem}
\label{UniformCon}
Fix $\,\xi\in\Bn$ and $e\in\Cd$.  Then the map 
$z\mapsto \left(U^{(p,\alpha)}_z\right)^* k_\xi^{(q,\alpha)}e$
is uniformly continuous from 
$(\Bn,\rho)$ into $(A^{q}_{\alpha},\norm{\,\cdot\,}_{A^q_{\alpha}})$.
\end{lem}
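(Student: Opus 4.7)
The starting point is identity \eqref{Comp} proved just above the lemma, which gives
\[
\bigl(U_z^{(p,\alpha)}\bigr)^* k_\xi^{(q,\alpha)} e = \lambda_{(p,\alpha)}(\xi,z)\, k_{\varphi_z(\xi)}^{(q,\alpha)} e.
\]
Adding and subtracting $\lambda(\xi,z)\,k_{\varphi_w(\xi)}^{(q,\alpha)} e$ and applying the triangle inequality splits $\|\bigl(U_z^{(p,\alpha)}\bigr)^* k_\xi^{(q,\alpha)} e - \bigl(U_w^{(p,\alpha)}\bigr)^* k_\xi^{(q,\alpha)} e\|_{A_\alpha^q}$ into the sum of $|\lambda(\xi,z)-\lambda(\xi,w)|\cdot\|k_{\varphi_w(\xi)}^{(q,\alpha)} e\|_{A_\alpha^q}$ and $\|k_{\varphi_z(\xi)}^{(q,\alpha)} e - k_{\varphi_w(\xi)}^{(q,\alpha)} e\|_{A_\alpha^q}$.

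For the first summand, since $\xi$ is fixed with $|\xi|<1$, the denominator $1-\overline{z}\xi$ appearing in the formula for $\lambda_{(p,\alpha)}(\xi,z)$ is uniformly bounded away from zero on $\overline{\Bn}$, so $z\mapsto \lambda_{(p,\alpha)}(\xi,z)$ extends to a smooth function on $\overline{\Bn}$ and is therefore uniformly Euclidean-continuous. The elementary bound $|z-w|=\rho(z,w)\,|1-\overline{z}w|\leq 2\rho(z,w)$ converts this into uniform $\rho$-continuity, and combined with the bound $\|k_{\varphi_w(\xi)}^{(q,\alpha)} e\|_{A_\alpha^q}\lesssim \|e\|_{\Cd}$ the first summand tends to $0$ uniformly as $\rho(z,w)\to 0$.

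The second summand carries the main content. Lemma \ref{invariance} gives $\rho(\varphi_z(\xi),\varphi_w(\xi))\lesssim \rho(z,w)/(1-|\xi|^2)$, so the problem reduces to proving that $\eta\mapsto k_\eta^{(q,\alpha)} e$ is uniformly $\rho$-continuous from $\Bn$ into $A_\alpha^q$. For this I would write $\eta'=\varphi_\eta(\zeta)$ with $|\zeta|=\rho(\eta,\eta')$ and apply the Möbius change of variables $w=\varphi_\eta(u)$ given by \eqref{eqn:cov}. Using the standard identities $1-\overline{\eta}\,\varphi_\eta(u)=(1-|\eta|^2)/(1-\overline{\eta}u)$ and $1-\overline{\varphi_\eta(\zeta)}\,\varphi_\eta(u)=(1-|\eta|^2)(1-\overline{\zeta}u)/[(1-\overline{\zeta}\eta)(1-\overline{\eta}u)]$, a direct computation transforms $\|k_\eta^{(q,\alpha)}-k_{\eta'}^{(q,\alpha)}\|_{A_\alpha^q}^q$ into an integral of $|1-\overline{\eta}u|^{(n+1+\alpha)(q-2)}$ against the $q$-th power of a quantity depending on $\zeta$, $\eta$, $u$ that tends to $0$ uniformly in $u$ and $\eta$ as $|\zeta|\to 0$. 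The polynomial factor is integrable against $\dva$ uniformly in $\eta$ by Lemma \ref{Growth}, and dominated convergence then delivers the required uniform bound.

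The main obstacle is this last step: the kernels $k_\eta^{(q,\alpha)}$ form a uniformly bounded family in $A_\alpha^q$ but become highly peaked as $|\eta|\to 1$, so uniform continuity is delicate and fails in the Euclidean metric — it is only natural with respect to the Möbius-invariant metric $\rho$. The Möbius change of variables is the essential device: it converts the $\rho$-proximity $\rho(\eta,\eta')=|\zeta|$ into smallness of quantities such as $1-|\zeta|^2$ and $|\overline{\zeta}u|$, which can then be controlled uniformly in $\eta$.
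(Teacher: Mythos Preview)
Your argument is essentially correct, but it is doing far more work than the paper does. The paper's proof is a single line: since $e\in\Cd$ is a fixed vector, one has
\[
\norm{\bigl(U^{(p,\alpha)}_{z}\bigr)^* k_\xi^{(q,\alpha)}e-\bigl(U^{(p,\alpha)}_{w}\bigr)^* k_\xi^{(q,\alpha)}e}_{A_{\alpha}^{q}(\Bn;\Cd)}
=\norm{\bigl(U^{(p,\alpha)}_{z}\bigr)^* k_\xi^{(q,\alpha)}-\bigl(U^{(p,\alpha)}_{w}\bigr)^* k_\xi^{(q,\alpha)}}_{A_{\alpha}^{q}(\Bn;\C)}\cdot\norm{e}_q,
\]
and the right-hand side is exactly the scalar quantity treated in \cite{MSW}*{Lemma 4.8}. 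In other words, the vector-valued statement is a trivial consequence of the scalar one because $U_z^{(p,\alpha)}$ acts componentwise.

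What you have written is, in effect, a sketch of the proof of that scalar lemma: the splitting via \eqref{Comp}, the control of the unimodular factor $\lambda_{(p,\alpha)}$, the reduction via Lemma~\ref{invariance}, and the M\"obius change of variables for $\eta\mapsto k_\eta^{(q,\alpha)}$ are precisely the ingredients of the argument in \cite{MSW}. So your route is not wrong, just redundant given that the scalar result is available to cite. One small caveat: the identity $|z-w|=\rho(z,w)\,|1-\overline{z}w|$ that you invoke is specific to $n=1$; in $\Bn$ for $n>1$ the involution $\varphi_z$ has a more complicated form and only an inequality $|z-w|\lesssim\rho(z,w)$ survives, which is still enough for your purposes.
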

\begin{proof}
For \begin{math}z,w\in\Bn\end{math}, there holds
\begin{align*}
\norm{\left(U^{(p,\alpha)}_{z}\right)^* k_\xi^{(q,\alpha)}e-
\left(U^{(p,\alpha)}_{w}\right)^* 
k_\xi^{(q,\alpha)}e}_{A_{\alpha}^{q}(\Bn;\Cd)}
=
\norm{\left(U^{(p,\alpha)}_{z}\right)^* k_\xi^{(q,\alpha)}-
\left(U^{(p,\alpha)}_{w}\right)^* k_\xi^{(q,\alpha)}}_{A_{\alpha}^{q}}.
\end{align*}
And now we may apply the scalar-valued version, \cite{MSW}*{Lemma 4.8}. 
\end{proof}

\begin{prop}
\label{WOTCon}
Let $S\in \mathcal{L}(A^p_{\alpha})$.  Then the map $\Psi_S: \Bn 
\to 
\left(\mathcal{L}(A^p_{\alpha}), WOT\right)$ extends continuously 
to 
$M_{\mathcal{A}}$.
\end{prop}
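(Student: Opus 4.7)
The plan is to verify the extension seminorm-by-seminorm against the family $\{T\mapsto\abs{\ip{Tf}{g}_{A^2_\alpha}}:(f,g)\in\apa\times A^q_\alpha\}$ that defines the WOT on $\fl(\apa)$. Concretely, it suffices to show that for each such pair the scalar function
$$\psi_{f,g}(z):=\ip{S_z f}{g}_{A^2_\alpha}$$
extends continuously from $(\Bn,\rho)$ to $M_{\mathcal{A}}$, and then to reassemble the family of extensions into an operator-valued extension of $\Psi_S$. Boundedness is immediate from Lemma \ref{lem:uzpa}: since $U_z^{(p,\alpha)}$ is an isometry on $\apa$ and hence $(U_z^{(q,\alpha)})^{*}$ is an isometry on $\apa$, we have $\abs{\psi_{f,g}(z)}\leq\norm{S}_{\fl(\apa)}\norm{f}_{\apa}\norm{g}_{A^q_\alpha}$, so the image of each $\psi_{f,g}$ sits in a compact subset of $\C$. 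By Lemma \ref{Extend} all that remains is uniform $\rho$-continuity.

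To obtain uniform continuity I would combine this uniform bound with the density of $\Span\{k_\xi^{(p,\alpha)}e:\xi\in\Bn,\,e\in\Cd\}$ in $\apa$ (and the analogous span in $A^q_\alpha$) to reduce to the case $f=k_{\xi_1}^{(p,\alpha)}e_1$, $g=k_{\xi_2}^{(q,\alpha)}e_2$. For such $f,g$ I would split
\begin{align*}
\psi_{f,g}(z)-\psi_{f,g}(w)
&=\Langle S\bigl[(U_z^{(q,\alpha)})^{*}-(U_w^{(q,\alpha)})^{*}\bigr]f,\,(U_z^{(p,\alpha)})^{*}g\Rangle_{A^2_\alpha}\\
&\quad+\Langle S(U_w^{(q,\alpha)})^{*}f,\,\bigl[(U_z^{(p,\alpha)})^{*}-(U_w^{(p,\alpha)})^{*}\bigr]g\Rangle_{A^2_\alpha},
\end{align*}
and control each term by H\"older's inequality together with the isometric bound on the unchanged factor. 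Each piece is then dominated by $\norm{S}_{\fl(\apa)}$ times a quantity of the form $\norm{(U_z^{(\cdot,\alpha)})^{*}k_\xi^{(\cdot,\alpha)}e-(U_w^{(\cdot,\alpha)})^{*}k_\xi^{(\cdot,\alpha)}e}$, which tends to $0$ uniformly as $\rho(z,w)\to 0$ by Lemma \ref{UniformCon}, yielding the required uniform $\rho$-continuity.

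Once each $\psi_{f,g}$ admits a continuous extension $\widetilde{\psi}_{f,g}:M_{\mathcal{A}}\to\C$, for $x\in M_{\mathcal{A}}$ the map $(f,g)\mapsto\widetilde{\psi}_{f,g}(x)$ is sesquilinear by passage to the limit from $\Bn$ and bounded by $\norm{S}_{\fl(\apa)}\norm{f}_{\apa}\norm{g}_{A^q_\alpha}$; the duality $(A^q_\alpha)^{*}\simeq\apa$ then produces a unique $S_x\in\fl(\apa)$ satisfying $\ip{S_x f}{g}_{A^2_\alpha}=\widetilde{\psi}_{f,g}(x)$, and setting $\Psi_S(x):=S_x$ gives the desired extension, with WOT-continuity at $x$ built into the construction (nets $z_\omega\to x$ automatically satisfy $\ip{S_{z_\omega}f}{g}\to\ip{S_x f}{g}$ for every test pair). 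I expect the main obstacle to be the reduction step: the approximation of an arbitrary pair $(f,g)$ by finite kernel sums must be carried out with uniform-in-$z$ error control so that the uniform $\rho$-continuity produced by Lemma \ref{UniformCon} on the dense family genuinely transfers to the full space.
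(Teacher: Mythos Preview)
Your proof is correct and follows essentially the same route as the paper: the same two-term splitting of $S_z-S_w$, the same reduction to kernel functions via density and the uniform boundedness of the $U_z$'s, and the same appeal to Lemma~\ref{UniformCon}. The only cosmetic difference is that the paper invokes Lemma~\ref{Extend} directly for the operator-valued map $\Psi_S$ (using that bounded sets in $(\fl(\apa),WOT)$ are metrizable with compact closure), which spares your final reassembly-via-duality step.
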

\begin{proof}
Bounded sets in $\mathcal{L}(A^p_{\alpha})$ are metrizable and 
have 
compact closure in the weak operator topology.  Since $\Psi_S(\Bn)$ is bounded, 
by Lemma
\ref{Extend}, we only need to show $\Psi_S$ is uniformly continuous 
from $(\Bn,\rho)$ 
into $\left(\mathcal{L}(A^p_{\alpha}), WOT\right)$, where $WOT$ is
the weak operator topology.  Namely, we need to demonstrate that for 
$f\in A^p_{\alpha}$ and $g\in A^{q}_{\alpha}$ the function 
$z\mapsto \ip{S_z f}{g}_{A^2_{\alpha}}$ is uniformly continuous from 
$(\Bn,\rho)$ into $(\C, \abs{\,\cdot\,})$.

For $z_1,z_2\in\Bn$ we have
\begin{align*}
S_{z_1}-S_{z_2}
&=U_{z_1}^{(p,\alpha)}S(U_{z_1}^{(q,\alpha)})^*-U_{z_2}^{(p,\alpha)}
    S(U_{z_2}^{(q,\alpha)})^*
\\&=U_{z_1}^{(p,\alpha)}S[(U_{z_1}^{(q,\alpha)})^*-(U_{z_2}^{(q,\alpha)})^*]+
    (U_{z_1}^{(p,\alpha)}-U_{z_2}^{(p,\alpha)})S(U_{z_2}^{(q,\alpha)})^*
\\&=A+B.
\end{align*}
The terms $A$ and $B$ have a certain symmetry, and so it is enough to deal with 
either,
since the argument will work in the other case as well.  Observe that
\begin{eqnarray*}
\abs{\ip{Af}{g}_{A^2_{\alpha}}} & \leq & 
\norm{U_{z_1}^{(p,\alpha)}S}_{\mathcal{L}(A^p_{\alpha})}
\norm{[(U_{z_1}^{(q,\alpha)})^*-(U_{z_2}^{(q,\alpha)})^*]f}_{A^p_{\alpha}}
\norm{g}_{A^q_{\alpha}}\\
\abs{\ip{Bf}{g}_{A^2_{\alpha}}} & \leq & 
\norm{(U_{z_1}^{(q,\alpha)})^{*}S}_{\mathcal{L}
(A^p_{\alpha})}
\norm{[(U_{z_1}^{(p,\alpha)})^*-(U_{z_2}^{(p,\alpha)})^*]g}_{A^q_{\alpha}}
\norm{f}_{A^p_{\alpha}}.
\end{eqnarray*}
Since $S$ is bounded and since 
$\norm{U_{z}^{(p,\alpha)}}_{\mathcal{L}(A^p_{\alpha},)}\leq C(p,\alpha)$ for all
$z$, we just need to show the expression
$$
\norm{[(U_{z_1}^{(p,\alpha)})^*-(U_{z_2}^{(p,\alpha)})^*]g}_{A^q_{\alpha}}
$$
can be made small.  It suffices to do this on a dense set of functions,
and in particular we can take the linear span of 
$\left\{k_{w}^{(p,\alpha)}e:w\in\Bn;e\in\Cd\right\}$.
Then we can apply Lemma \ref{UniformCon} to conclude the result.
\end{proof}

This proposition allows us to define $S_{x}$ for 
\begin{math}x\in M_{\mathcal{A}}\setminus\Bn\end{math}. 
We set $S_{x}:=\Psi_{S}(x)$. If $(z_{\omega})$ is a net that converges 
to \begin{math}x\in M_{\alpha}\end{math}, then 
\begin{math}S_{z_{\omega}}\to S_{x}
\end{math} in WOT. In Proposition~\ref{SOTCon}, we will show that if 
\begin{math}S\in\mathcal{T}_{p,\alpha}\end{math}, then this convergence also 
takes place in SOT.

\begin{lem}
\label{inverse}
If $(z_\omega)$ is a net in $\Bn$ converging to $x\in M_{\mathcal{A}}$, then 
$T_{b_{x} I_d}$ is invertible and 
$T_{b_{z_{\omega}} I_d}^{-1}\to T_{b_x I_d}^{-1}$ in
the strong operator topology.
\end{lem}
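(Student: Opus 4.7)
The first observation is that $b_z$ is unimodular on $\Bn$: the numerator $(1-\overline{w}z)^{(n+1+\alpha)(1/q-1/p)}$ and denominator $(1-\overline{z}w)^{(n+1+\alpha)(1/q-1/p)}$ in \eqref{bofz} are complex conjugates, so $|b_z(w)|=1$, and by taking limits $|b_x(w)|=1$ as well. Consequently $T_{b_x I_d}$ is a bounded operator on $\apa$, and the problem reduces to invertibility and SOT convergence of inverses. The key structural input is the factorization $T_{b_z I_d}^{-1} = U_z^{(p,\alpha)}(U_z^{(q,\alpha)})^*$ derived in the text; combined with Lemma \ref{lem:uzpa} and its dual version for $q$, this yields
$$\sup_{z \in \Bn} \|T_{b_z I_d}^{-1}\|_{\mathcal{L}(\apa)} \lesssim 1.$$

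Applying this bound to $I = T_{b_z I_d}^{-1} T_{b_z I_d}$ gives the uniform lower estimate $\|T_{b_z I_d} f\|_{\apa} \geq c\|f\|_{\apa}$ for some $c>0$ independent of $z$. Since $T_{b_{z_\omega} I_d} \to T_{b_x I_d}$ in SOT (already noted in the text, as $b_{z_\omega}\to b_x$ pointwise and boundedly), the same lower bound passes to the limit, so $T_{b_x I_d}$ is injective with closed range.

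The main obstacle, and the bulk of the proof, is to exhibit a bounded inverse for $T_{b_x I_d}$ by showing that the SOT limit $B_x$ of $T_{b_{z_\omega} I_d}^{-1}$ exists. Because the inverses are uniformly bounded, it suffices to check norm convergence on the dense span of the vectors $k_\xi^{(p,\alpha)}e$. Adapting identity \eqref{Comp} by interchanging $p$ and $q$ gives
$$T_{b_z I_d}^{-1} k_\xi^{(p,\alpha)} e = \lambda_{(q,\alpha)}(\xi,z)\,U_z^{(p,\alpha)} k_{\varphi_z(\xi)}^{(p,\alpha)} e.$$
The scalar factor is unimodular and $(\rho,|\cdot|)$-uniformly continuous in $z$, while the vector-valued factor is $(\rho,\|\cdot\|_{\apa})$-uniformly continuous by the componentwise reduction to the scalar statement in Lemma \ref{UniformCon} (which in turn rests on \cite{MSW}*{Lemma 4.8}). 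Lemma \ref{Extend} then produces a continuous extension to $M_{\mathcal{A}}$, delivering the required norm convergence on the dense set, and hence SOT convergence of the inverses.

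Finally, I close the argument by multiplying limits. Since for uniformly bounded SOT-convergent nets $A_\omega\to A$ and $B_\omega\to B$ one has $A_\omega B_\omega\to AB$ in SOT (via the splitting $A_\omega B_\omega f - ABf = A_\omega(B_\omega f-Bf)+(A_\omega-A)Bf$), passing to the SOT limit in $T_{b_{z_\omega} I_d}^{-1}T_{b_{z_\omega} I_d}=T_{b_{z_\omega} I_d}T_{b_{z_\omega} I_d}^{-1}=\mathrm{Id}_{\apa}$ produces $B_x T_{b_x I_d}=T_{b_x I_d}B_x=\mathrm{Id}_{\apa}$. Therefore $T_{b_x I_d}$ is invertible with $T_{b_x I_d}^{-1}=B_x$, and the convergence $T_{b_{z_\omega} I_d}^{-1}\to T_{b_x I_d}^{-1}$ in SOT is precisely the SOT convergence to $B_x$ established above.
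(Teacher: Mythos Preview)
Your argument has a genuine gap at the point where you claim norm convergence of $T_{b_{z_\omega} I_d}^{-1} k_\xi^{(p,\alpha)}e$. You write that the ``vector-valued factor'' $z\mapsto U_z^{(p,\alpha)} k_{\varphi_z(\xi)}^{(p,\alpha)}e$ is $(\rho,\norm{\cdot}_{\apa})$-uniformly continuous by reduction to Lemma~\ref{UniformCon}. But Lemma~\ref{UniformCon} (and the scalar \cite{MSW}*{Lemma 4.8} underneath it) concerns the map $z\mapsto \left(U_z^{(p,\alpha)}\right)^* k_\xi^{(q,\alpha)}e$, which after the $p\leftrightarrow q$ swap controls only the \emph{inner} factor $\left(U_z^{(q,\alpha)}\right)^* k_\xi^{(p,\alpha)}e = \lambda_{(q,\alpha)}(\xi,z)\,k_{\varphi_z(\xi)}^{(p,\alpha)}e$. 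The \emph{outer} factor $U_z^{(p,\alpha)}$ also depends on $z$, and nothing you cite gives uniform $\rho$-continuity of $z\mapsto U_z^{(p,\alpha)}g$ in $\apa$-norm, let alone when $g=k_{\varphi_z(\xi)}^{(p,\alpha)}e$ itself moves with $z$. In the splitting
\[
T_{b_{z_1}}^{-1}f - T_{b_{z_2}}^{-1}f
= U_{z_1}^{(p,\alpha)}\bigl[(U_{z_1}^{(q,\alpha)})^* - (U_{z_2}^{(q,\alpha)})^*\bigr]f
+ \bigl[U_{z_1}^{(p,\alpha)} - U_{z_2}^{(p,\alpha)}\bigr](U_{z_2}^{(q,\alpha)})^* f,
\]
the first term is handled by Lemma~\ref{UniformCon} and the isometry of $U_{z_1}^{(p,\alpha)}$, but the second term is not: this is precisely why Proposition~\ref{WOTCon} only delivers WOT continuity (there one dualizes the second term against $g\in A^q_\alpha$, which you cannot do for a norm estimate).

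The paper avoids this obstacle entirely. It applies Proposition~\ref{WOTCon} to $S=\mathrm{Id}$ to obtain a WOT limit $Q$ of $T_{b_{z_\omega}I_d}^{-1}$, then verifies $T_{b_xI_d}Q=\mathrm{Id}$ (and by adjoints $QT_{b_xI_d}=\mathrm{Id}$) using only the already-established SOT convergence $T_{b_{z_\omega}I_d}\to T_{b_xI_d}$ and a pairing argument. Finally, the upgrade from WOT to SOT is the one-line resolvent identity
\[
T_{b_{z_\omega}I_d}^{-1} - T_{b_xI_d}^{-1}
= T_{b_{z_\omega}I_d}^{-1}\bigl(T_{b_xI_d} - T_{b_{z_\omega}I_d}\bigr)T_{b_xI_d}^{-1},
\]
which needs only the uniform bound on $T_{b_{z_\omega}I_d}^{-1}$ (which you correctly derived) together with $T_{b_{z_\omega}I_d}\to T_{b_xI_d}$ in SOT. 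This route requires no new uniform-continuity statements beyond what is already available, and your injectivity/closed-range step becomes unnecessary.
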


\begin{proof}
By Proposition \ref{WOTCon} applied to the operator $S=Id_{A^p_\alpha}$ we have 
that
$U_{z_{\omega}}^{(p,\alpha)}\left(U_{z_{\omega}}^{(q,\alpha)}\right)^*=
T_{b_{z_{\omega}}}^{-1}$ has a weak operator limit in 
$\mathcal{L}(A^p_{\alpha},)$, denote this by $Q$. The Uniform Boundedness 
Principle then says that there is a constant $C$ such that 
$\norm{T_{b_{z_{\omega}}}^{-1}}_{\mathcal{L}(A^p_{\alpha})}\leq C$
for all $\omega$.  Then, given $f\in A^p_{\alpha}$ 
and $g\in A^q_{\alpha}$, since we know that
$$
\norm{\left(T_{\overline{b}_{z_{\omega}}I_d}-
T_{\overline{b}_{x}I_d}\right)g}_{A^q_{\alpha}}
\to 0,
$$
there holds
\begin{align*}
\ip{T_{b_x I_d}Qf}{g}_{A^2_{\alpha}}&=\ip{Qf}
{T_{\overline{b}_x I_d}g}_{L^2_{\alpha}} 
\\&= \lim_{\omega}\ip{T^{-1}_{b_{z_{\omega}} I_d}f}
    {T_{\overline{b}_x I_d}g}_{L^2_{\alpha}}
\\&= \lim_{\omega}\left(\ip{T^{-1}_{b_{z_{\omega}}I_d}f}
    {\left(T_{\overline{b}_x I_d}-
    T_{\overline{b}_{z_{\omega}}i}\right)g}_{L^2_{\alpha}}
    +\ip{T^{-1}_{b_{z_{\omega}}I_d}f}
    {T_{\overline{b}_{z_{\omega}}I_d}g}_{L^2_{\alpha}}\right)
\\&= \ip{f}{g}_{L^2_{\alpha}}+\lim_{\omega}\ip{T^{-1}_{b_{z_{\omega}}I_d}f}
    {\left(T_{\overline{b}_x I_d}-
    T_{\overline{b}_{z_{\omega}}I_d}\right)g}_{L^2_{\alpha}}
\\&=\ip{f}{g}_{L^2_\alpha}.
\end{align*}
This gives $T_{b_x I_d}Q=Id_{A^p_{\alpha}}$.  Since taking adjoints is a 
continuous 
operation in the $WOT$, $T_{\overline{b}_{z_{\omega}}I_d}^{-1}\to Q^*$, and 
interchanging the roles of $p$ and $q$, we have $T_{\overline{b}_x I_d}Q^*=
Id_{A^q_{\alpha}}$, which implies that $QT_{b_x I_d}=Id_{A^p_{\alpha}}$. So, 
$Q=T_{b_x I_d}^{-1}$ and $T_{b_{z_{\omega}}I_d}^{-1}\to T_{b_x I_d}^{-1}$ in the
weak operator topology. Finally,
$$
T_{b_{z_{\omega}}I_d}^{-1}-T_{b_x I_d}^{-1}=T_{b_{z_{\omega}}I_d}^{-1}\left
    (T_{b_x I_d} -T_{b_{z_{\omega}}I_d}\right) T_{b_x I_d}^{-1},
$$
and since 
$\norm{T_{b_{z_{\omega}}I_d}^{-1}}_{\mathcal{L}(A^p_{\alpha})}\leq C$ and 
$T_{b_{z_{\omega}}I_d}-T_{b_x I_d}\to 0$ in the strong
operator topology, we also have $T_{b_{z_{\omega}}I_d}^{-1}\to T_{b_x I_d}^{-1}$
in the strong operator topology as claimed.
\end{proof}

\begin{prop}
\label{SOTCon}
If $S\in\mathcal{T}_{p,\alpha}$ and $(z_{\omega})$ is a net in $\Bn$ that tends 
to 
$x\in M_{\mathcal{A}}$, then $S_{z_{\omega}}\to S_x$ in the strong operator 
topology. 
In particular, $\Psi_S:\Bn\to (\mathcal{L}(A^p_{\alpha}), SOT)$ 
extends 
continuously to $M_{\mathcal{A}}$.
\end{prop}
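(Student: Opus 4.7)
The plan is a density-and-uniform-boundedness reduction followed by an induction on finite products. Since $\|U_z^{(p,\alpha)}\|,\|(U_z^{(q,\alpha)})^*\|\leq C(p,\alpha)$ uniformly in $z\in\Bn$, the map $R\mapsto R_z$ is uniformly bounded on $\mathcal{L}(A^p_{\alpha})$, and by lower semicontinuity of the operator norm under the WOT-convergence supplied by Proposition~\ref{WOTCon} this bound $\|R_x\|\leq C(p,\alpha)^2\|R\|$ persists at every $x\in M_\mathcal{A}$. Given $\epsilon>0$ and $f\in A^p_\alpha$, Theorem~\ref{thm:msw4.7} furnishes a finite sum of finite products $S_0=\sum_{i=1}^{m} T_{a_1^i}\cdots T_{a_{k_i}^i}$, with each $a_l^i\in\mathcal{A}_d$, satisfying $\|S-S_0\|<\epsilon$. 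A three-epsilon estimate then reduces the claim to SOT-convergence $S_{0,z_\omega}f\to S_{0,x}f$ and, by linearity, to the case of a single product $S_0=T_{a_1}\cdots T_{a_k}$.

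For products the algebraic engine is the twisted multiplicativity
\[
(AB)_z \;=\; A_z\,T_{b_zI_d}\,B_z,
\]
obtained by inserting $(U_z^{(q,\alpha)})^*U_z^{(p,\alpha)}=T_{b_zI_d}$ between $A$ and $B$ in $U_z^{(p,\alpha)}AB(U_z^{(q,\alpha)})^*$. Iterating expands $(T_{a_1}\cdots T_{a_k})_z$ as an alternating composition of $k$ conjugates $T_{a_i,z}$ and $k-1$ twisting factors $T_{b_zI_d}$, each bounded uniformly in $z\in M_\mathcal{A}$. Since composition is jointly SOT-continuous on norm-bounded sets, it is enough to establish the separate SOT-convergences $T_{a_i,z_\omega}\to T_{a_i,x}$ and $T_{b_{z_\omega}I_d}\to T_{b_xI_d}$; uniqueness of limits then identifies the resulting SOT-limit of the product with the WOT-limit $(T_{a_1}\cdots T_{a_k})_x$ already furnished by Proposition~\ref{WOTCon}.

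The twisting-factor convergence $T_{b_{z_\omega}I_d}\to T_{b_xI_d}$ is exactly the $a\equiv 1$ case of the SOT-convergence $T_{(a\circ\varphi_{z_\omega})b_{z_\omega}I_d}\to T_{(a\circ\varphi_x)b_xI_d}$ already recorded in the paragraph preceding Lemma~\ref{inverse}, which rests on the uniform bound on the symbol and the uniform-on-compacta convergence supplied by Lemma~\ref{Maximal3}. The convergence $T_{a,z_\omega}\to T_{a,x}$ for $a\in\mathcal{A}_d$ is handled by the same kind of symbol argument applied to the factorisation
\[
T_{a,z} \;=\; T_{J_z^{2/p-1}I_d}\,T_{a\circ\varphi_z}\,T_{\overline{J_z}^{2/q-1}I_d},
\]
which comes from $U_z^{(p,\alpha)}=T_{J_z^{2/p-1}I_d}U_z^{(2,\alpha)}$, $(U_z^{(q,\alpha)})^*=U_z^{(2,\alpha)}T_{\overline{J_z}^{2/q-1}I_d}$, and the $L^2$-intertwining $U_z^{(2,\alpha)}T_aU_z^{(2,\alpha)}=T_{a\circ\varphi_z}$ (valid entry-wise because the Bergman projection acts coordinate-wise on $\Cd$-valued functions); checking SOT-convergence on a dense set such as $\{k_\xi^{(p,\alpha)} e\}$ via the explicit formula \eqref{Comp}, together with the uniform norm bound on $T_{a,z}$ and the uniform-on-compacta convergence $a\circ\varphi_{z_\omega}\to a\circ\varphi_x$, suffices. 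The main technical difficulty is not conceptual but organisational: one must keep the twisting factors $T_{b_zI_d}$ straight through the iteration and justify shifting SOT-limits inside the finite product one factor at a time, which is carried out by the standard telescoping argument using the uniform norm bounds on every factor.
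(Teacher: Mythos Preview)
Your overall architecture---density reduction to finite sums of finite products via Theorem~\ref{thm:msw4.7}, the twisted multiplicativity $(AB)_z=A_zT_{b_zI_d}B_z$, and joint SOT-continuity of composition on norm-bounded sets---matches the paper's proof exactly. The difference, and the place where your argument breaks, is in the treatment of the single-Toeplitz conjugate $(T_a)_z$.

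You propose to use the factorisation
\[
(T_a)_z \;=\; T_{J_z^{2/p-1}I_d}\,T_{a\circ\varphi_z}\,T_{\overline{J_z}^{\,2/q-1}I_d}
\]
and then invoke ``the same kind of symbol argument''. That argument requires the symbols to be uniformly bounded in $L^\infty$ and to converge uniformly on compact subsets of $\Bn$; but for $p\neq 2$ the functions $J_z^{2/p-1}$ and $\overline{J_z}^{\,2/q-1}$ are \emph{not} uniformly bounded as $\abs{z}\to 1$ (for instance $\abs{J_z^{r}(z)}=(1-\abs{z}^2)^{-r(n+1+\alpha)/2}$ when $r>0$, and $\abs{J_z^{r}(0)}=(1-\abs{z}^2)^{r(n+1+\alpha)/2}$ when $r<0$). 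Hence neither the individual factors nor a direct appeal to \eqref{Comp} on the kernels $k_\xi^{(p,\alpha)}e$ gives you the norm convergence $T_{a,z_\omega}k_\xi^{(p,\alpha)}e\to T_{a,x}k_\xi^{(p,\alpha)}e$ that your density argument needs. The composite $(T_a)_z$ is of course uniformly bounded, but you have not shown it is SOT-Cauchy along the net.

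The paper resolves this by pushing the computation one step further: inserting $(U_z^{(q,\alpha)})^*U_z^{(p,\alpha)}=T_{b_zI_d}$ on both sides gives
\[
(T_a)_z \;=\; T_{b_zI_d}^{-1}\,T_{(a\circ\varphi_z)b_zI_d}\,T_{b_zI_d}^{-1},
\]
so that the full product $(T_{a_1}\cdots T_{a_k})_z$ becomes an alternating product of $T_{b_zI_d}^{-1}$'s and $T_{(a_j\circ\varphi_z)b_zI_d}$'s. Now every symbol that appears is uniformly bounded (indeed $\abs{b_z}\equiv 1$), the middle factors converge in SOT by the symbol argument you already cite, and the inverse factors converge in SOT by Lemma~\ref{inverse}. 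That lemma is precisely the missing ingredient your decomposition tries to avoid; note that proving $(T_a)_{z_\omega}\to (T_a)_x$ in SOT for $a=I_d$ \emph{is} Lemma~\ref{inverse}, so you cannot sidestep it.
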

\begin{proof}
First observe that if $A, B\in \mathcal{L}(A^p_{\alpha})$ then
\begin{align*}
(AB)_z &= U^{(p,\alpha)}_zAB(U^{(q,\alpha)}_z)^{*} 
\\&=U^{(p,\alpha)}_zA(U^{(q,\alpha)}_z)^{*}(U^{(q,\alpha)}_z)^{*} 
    U^{(p,\alpha)}_zU^{(p,\alpha)}_z B(U^{(q,\alpha)}_z)^{*}
\\&=A_z T_{b_z I_d}B_z.
\end{align*}
In general, this applies to longer products of operators.

For $S\in \mathcal{T}_{p,\alpha}$ and $\epsilon>0$, by Theorem \ref{thm:msw4.7}
there is a finite sum of finite products of Toeplitz operators with symbols in 
$\mathcal{A}$ such that
$\norm{R-S}_{\mathcal{L}(A^p_{\alpha})}<\epsilon$, and so
$\norm{R_z-S_z}_{\mathcal{L}(A^p_{\alpha})}<C(p,\alpha)\epsilon$.
Passing to the $WOT$ limit we have $\norm{R_x-S_x}_{\mathcal{L}(A^p_{\alpha})}<
C(p,\alpha)\epsilon$
for all $x\in M_{\mathcal{A}}$.
These observations imply that it suffices to prove the Lemma for $R$ alone, and 
then by linearity, it suffices to consider the special case 
$R=\prod_{j=1}^m T_{a_j E_{i,k}}$, 
where $a_j\in\mathcal{A}$. Recall that $E_{i,k}$ is the $d\times d$ matrix 
with a 1 in the $(i,k)$ position and zeros everywhere else. A simple 
computation shows that
$$
U^{(2,\alpha)}_z T_a U^{(2,\alpha)}_z= T_{a\circ\varphi_z}
$$
and more generally,
\begin{eqnarray*}
(T_a)_z & = & U^{(p,\alpha)}_z\left(U^{(q,\alpha)}_z\right)^*
\left(U^{(q,\alpha)}_z\right)^*T_a 
U^{(p,\alpha)}_zU^{(p,\alpha)}_z\left(U^{(q,\alpha)}_z\right)^*\\
& = & U^{(p,\alpha)}_z\left(U^{(q,\alpha)}_z\right)^*
\left(T_{\overline{J}_z^{1-\frac{2}{q}}}U^{(2,\alpha)}_zT_a
U^{(2,\alpha)}_zT_{J_z^{1-\frac{2}{p}}}\right)U^{(p,\alpha)}_z
\left(U^{(q,\alpha)}_z\right)^*\\
& = & T_{b_{z}}^{-1}T_{(a\circ\varphi_z) b_z}T_{b_{z}}^{-1}.
\end{eqnarray*}
We now combine this computation with the observation at the beginning of the 
proposition to see that
\begin{eqnarray*}
\left(\prod_{j=1}^{m} T_{a_j}\right)_z & = & (T_{a_1})_z T_{b_z}\cdots 
T_{b_z}(T_{a_m})_z\\
 & = & T^{-1}_{b_z} T_{(a_1\circ\varphi_z) b_z}
 T^{-1}_{b_z}T_{(a_2\circ\varphi_z) b_z}T^{-1}_{b_z}\cdots
 T^{-1}_{b_z}T_{(a_m\circ\varphi_z) b_z}T^{-1}_{b_z}.
\end{eqnarray*}
But, since the product of $SOT$ nets is $SOT$ convergent, Lemma \ref{inverse} 
and the 
fact that $T_{(a\circ\varphi_{z_\omega})b_{z_{\omega}}}\to 
T_{(a\circ\varphi_x) b_x}$ in
the $SOT$, give
$$
\left(\prod_{j=1}^{m} T_{a_j}\right)_{z_\alpha}\to T^{-1}_{b_x}
T_{(a_1\circ\varphi_x) b_x}T^{-1}_{b_x}T_{(a_2\circ\varphi_x) b_x}T^{-1}_{b_x}
\cdots T^{-1}_{b_x}
T_{(a_m\circ\varphi_x) b_x}T^{-1}_{b_x}.
$$
But this is exactly the statement $R_{z_{\omega}}\to R_x$ in the $SOT$ for the 
operator $\prod_{j=1}^{m} T_{a_j}$, and proves the claimed continuous extension.
\end{proof}

Before continuing, we prove that the
Berezin transform is one-to-one. The following proof is an adaptation of
the corresponding scalar--valued proof found in \cite[Proposition 6.2]{zhu1}.

\begin{lem}
The Berezin transform is one to one. That is, if $\widetilde{T}=0$, then
$T=0$.
\end{lem}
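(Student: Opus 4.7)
The plan is to polarize: I will show that the hypothesis $\widetilde T\equiv 0$ forces the two‑variable function $\langle T(K_w^{(\alpha)} e), K_z^{(\alpha)} h\rangle_{A^2_\alpha}$ to vanish for all $w,z\in\Bn$ and all $e,h\in\Cd$, and then invoke density of reproducing kernels to conclude $T=0$.

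\emph{Step 1.} Rewrite the hypothesis in terms of unnormalized kernels. Since $k_\lambda^{(p,\alpha)}=(1-|\lambda|^2)^{(n+1+\alpha)/q}K_\lambda^{(\alpha)}$ and $k_\lambda^{(q,\alpha)}=(1-|\lambda|^2)^{(n+1+\alpha)/p}K_\lambda^{(\alpha)}$ with $\tfrac{1}{p}+\tfrac{1}{q}=1$, the relation
\begin{align*}
\langle \widetilde T(\lambda)e,h\rangle_{\Cd}
=(1-|\lambda|^2)^{n+1+\alpha}\,\bigl\langle T(K_\lambda^{(\alpha)} e),\,K_\lambda^{(\alpha)} h\bigr\rangle_{A^2_\alpha}
\end{align*}
holds for every $\lambda\in\Bn$, so $\widetilde T\equiv 0$ is equivalent to $F_{e,h}(\lambda,\lambda)=0$ for all $\lambda\in\Bn$ and $e,h\in\Cd$, where I set
\begin{align*}
F_{e,h}(w,z):=\bigl\langle T(K_w^{(\alpha)} e),\,K_z^{(\alpha)} h\bigr\rangle_{A^2_\alpha}.
\end{align*}

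\emph{Step 2.} Upgrade to off‑diagonal vanishing via the identity theorem. The map $w\mapsto K_w^{(\alpha)}$ is anti‑holomorphic, and the inner product is conjugate‑linear in the second slot while $z\mapsto K_z^{(\alpha)}$ is anti‑holomorphic, so $F_{e,h}$ is anti‑holomorphic in $w$ and holomorphic in $z$. Equivalently, there is a function $\tilde F_{e,h}(\xi,z)$ holomorphic on $\Bn\times\Bn$ with $F_{e,h}(w,z)=\tilde F_{e,h}(\overline w,z)$. The hypothesis forces $\tilde F_{e,h}$ to vanish on the totally real, real $2n$‑dimensional submanifold $\{(\overline\lambda,\lambda):\lambda\in\Bn\}$ of $\Bn\times\Bn$. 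The standard identity principle for holomorphic functions vanishing on a maximally‑real submanifold then yields $\tilde F_{e,h}\equiv 0$, hence $F_{e,h}(w,z)=0$ for all $w,z\in\Bn$ and $e,h\in\Cd$.

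\emph{Step 3.} Deduce $T K_w^{(\alpha)} e=0$. By the componentwise reproducing property, for $f\in A^p_\alpha$ (paired against $K_z^{(\alpha)}h\in A^q_\alpha$ via Hölder) one has $\langle f, K_z^{(\alpha)} h\rangle_{A^2_\alpha}=\langle f(z),h\rangle_{\Cd}$. Applied to $f=T(K_w^{(\alpha)} e)$ this gives
\begin{align*}
\bigl\langle (T K_w^{(\alpha)} e)(z),\,h\bigr\rangle_{\Cd}=F_{e,h}(w,z)=0
\end{align*}
for all $z\in\Bn$ and $h\in\Cd$, so $T K_w^{(\alpha)} e=0$ for every $w\in\Bn$ and every $e\in\Cd$.

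\emph{Step 4.} Conclude by density. Finite linear combinations of $\{K_w^{(\alpha)} e : w\in\Bn,\ e\in\Cd\}$ are dense in $A^p_\alpha$: if $g\in A^q_\alpha$ annihilates all such combinations, then $\langle g, K_w^{(\alpha)} e\rangle=\langle g(w),e\rangle_{\Cd}=0$ for every $w,e$, forcing $g\equiv 0$, and density follows from the duality $(A^p_\alpha)^*\cong A^q_\alpha$. Since $T$ is bounded and vanishes on a dense set, $T=0$.

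The main obstacle is Step~2: one must carefully separate the holomorphic and anti‑holomorphic dependencies and cite the correct form of the identity principle (a holomorphic function on $\Bn\times\Bn\subset\C^{2n}$ that vanishes on the totally real submanifold $\{(\bar\lambda,\lambda)\}$ is identically zero). The remaining steps are essentially bookkeeping that transcribes the scalar argument of \cite[Prop.~6.2]{zhu1} into the vector‑valued setting.
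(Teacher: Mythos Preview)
Your proof is correct and follows essentially the same route as the paper: both rewrite the hypothesis in terms of unnormalized kernels, define the two-variable function $F(w,z)=\langle T(K_w^{(\alpha)}e),K_z^{(\alpha)}h\rangle_{A^2_\alpha}$, use the identity principle for functions holomorphic/anti-holomorphic in separate variables to pass from diagonal vanishing to global vanishing, and then conclude by the reproducing property and density of the kernels. The only cosmetic differences are that the paper cites Krantz for the polarization step while you phrase it via totally real submanifolds, and your density argument in Step~4 is spelled out more explicitly via duality.
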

\begin{proof}

Let $T\in\mathcal{L}(A_{\alpha}^{p})$ and suppose that
$\widetilde{T}=0_d$. (Here, of course, $0_d$ is the $d\times d$ zero matrix.) 
Then there holds:
\begin{align*}
0=\ip{T(k^{(p,\alpha)}_ze_i)}{k^{(q,\alpha)}_ze_k}_{A_{\alpha}^{2}}
=\frac{1}{K^{(\alpha)}(z,z)}\ip{T(K^{(\alpha)}_ze_i)}
{K^{(\alpha)}_ze_k}_{A_{\alpha}^{2}}
\end{align*}
for all $z\in\Bn$ and for all $1\leq i,k \leq d$. In particular, there holds:
\begin{align*}
\frac{1}{K^{(\alpha)}(z,z)}\ip{T(K^{(\alpha)}_ze_i)}
{K^{(\alpha)}_ze_k}_{A_{\alpha}^{2}}\equiv 0.
\end{align*}
Consider the function 
\begin{align*}
F(z,w)=\ip{T(K^{(\alpha)}_we_i)}{K^{(\alpha)}_ze_k}_{A_{\alpha}^{2}}.
\end{align*}
This function is analytic in $z$, conjugate analytic in $w$ and $F(z,z)=0$ for 
all $z\in\Bn$. By a standard results for several complex variables (see for 
instance \cite{kra}*{Exercise 3 pg 365}) this implies 
that $F$ is identically $0$. Using the reproducing property, we conclude that
\begin{align*}
F(z,w)=\ip{T(K^{(\alpha)}_we_i)(z)}{e_k}_{\Cd}\equiv 0,
\end{align*}
and hence 
\begin{align*}
T(K^{(\alpha)}_we_i)(z)\equiv 0,
\end{align*}
for every $w\in\Bn$ and $1\leq i \leq d$. 
Since the products $K_we_i$ span $A_{\alpha}^{p}$, we conclude that 
$T\equiv 0$ and the desired result follows.
\end{proof}

\begin{prop}
\label{BerezinVanish}
Let $S\in\mathcal{L}(A^p_{\alpha})$.  Then $B(S)(z)\to 0$ as 
$\abs{z}\to 1$
if and only if\/ $S_x=0$ for all\/ $x\in M_{\mathcal{A}}\setminus\Bn$.
\end{prop}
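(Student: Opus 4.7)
The core of the argument is a single algebraic identity relating $\widetilde{S_z}$ to $\widetilde S$. Starting from the definition $S_z=U^{(p,\alpha)}_z S (U^{(q,\alpha)}_z)^{*}$ and moving the $U$-operators across the $A^2_\alpha$-pairing by adjunction, then applying~\eqref{Comp} (and its $p\leftrightarrow q$ analogue) to rewrite $(U^{(p,\alpha)}_z)^{*} k_\xi^{(q,\alpha)}h$ and $(U^{(q,\alpha)}_z)^{*} k_\xi^{(p,\alpha)}e$ in terms of reproducing kernels at $\varphi_z(\xi)$, one obtains
\[
\ip{\widetilde{S_z}(\xi)e}{h}_{\Cd}
=\lambda_{(q,\alpha)}(\xi,z)\,\overline{\lambda_{(p,\alpha)}(\xi,z)}\,
\ip{\widetilde{S}(\varphi_z(\xi))e}{h}_{\Cd}
\]
with unimodular prefactor. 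Two cases drive the proof: $\xi=0$ collapses to $\widetilde{S_z}(0)=\widetilde{S}(z)$, and for fixed $\xi\in\Bn$ the prefactor is bounded so that $\widetilde{S_z}(\xi)\to 0$ whenever $\widetilde{S}(\varphi_z(\xi))\to 0$.

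For the forward implication, fix $x\in M_{\mathcal{A}}\setminus\Bn$ and a net $z_\omega\to x$ in $\Bn$. Since every $a\in C(\overline{\Bn})\subset\mathcal{A}$ is $\rho$-uniformly continuous, if $|z_\omega|\not\to 1$ then along a subnet $z_\omega$ would lie in some compact subset of $\Bn$ and hence Euclidean-cluster at an interior point $z_0\in\Bn$; then $a(z_\omega)\to a(z_0)$ for all $a\in\mathcal{A}$, forcing $x=z_0\in\Bn$, a contradiction. Hence $|z_\omega|\to 1$, and the M\"obius identity $1-|\varphi_z(\xi)|^2=(1-|z|^2)(1-|\xi|^2)|1-\overline{z}\xi|^{-2}$ yields $|\varphi_{z_\omega}(\xi)|\to 1$ for every fixed $\xi\in\Bn$. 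The hypothesis and the displayed identity give $\widetilde{S_{z_\omega}}(\xi)\to 0$ pointwise in $\xi$. Proposition~\ref{WOTCon} simultaneously gives $\widetilde{S_{z_\omega}}(\xi)\to\widetilde{S_x}(\xi)$, so $\widetilde{S_x}\equiv 0$ on $\Bn$, and the preceding injectivity lemma forces $S_x=0$.

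For the converse I argue by contraposition. If $\widetilde{S}(z)\not\to 0$, there exist $\delta>0$, a sequence $z_k\in\Bn$ with $|z_k|\to 1$, and unit vectors $e_k,h_k\in\Cd$ with $|\ip{\widetilde{S}(z_k)e_k}{h_k}_{\Cd}|\ge\delta$. Compactness of the unit sphere in $\Cd$ lets me pass to a subsequence with $e_k\to e$ and $h_k\to h$, and then compactness of $M_{\mathcal{A}}$ gives a subnet with $z_k\to x\in M_{\mathcal{A}}$; the argument of the previous paragraph again forces $x\notin\Bn$, so $S_x=0$ by hypothesis. The $\xi=0$ case of the identity rewrites $\ip{\widetilde{S}(z_k)e_k}{h_k}_{\Cd}=\ip{S_{z_k}(k_0^{(p,\alpha)}e_k)}{k_0^{(q,\alpha)}h_k}_{A^2_\alpha}$; since $k_0^{(p,\alpha)}\equiv k_0^{(q,\alpha)}\equiv 1$, the two arguments are the constant functions $e_k\to e$ and $h_k\to h$ in $\apa$ and $A^q_\alpha$ respectively. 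Lemma~\ref{lem:uzpa} gives the uniform bound $\|S_{z_\omega}\|_{\fl(\apa)}\lesssim\|S\|$, and combining this with WOT-convergence $S_{z_\omega}\to S_x$ in the standard three-term splitting allows one to pass to the limit and conclude that $\ip{\widetilde{S}(z_k)e_k}{h_k}_{\Cd}\to\ip{S_x e}{h}_{A^2_\alpha}=0$, contradicting the lower bound $\delta$.

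The main obstacle is the opening identity: all ingredients are in place, but the bookkeeping of the conjugate exponents and unimodular factors in~\eqref{Comp} must be done carefully. Once the identity is in hand the two directions are Gelfand-compactness shuffles essentially parallel to the scalar-valued arguments in~\cite{Sua,MSW}, with the Berezin-injectivity lemma of the previous subsection playing the role of the pivotal ``$\widetilde{T_x}=0\Rightarrow T_x=0$'' step in the matrix-valued setting.
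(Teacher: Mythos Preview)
Your proof is correct and follows essentially the same approach as the paper: both hinge on the identity $\abs{\ip{\widetilde{S_z}(\xi)e}{h}_{\Cd}}=\abs{\ip{\widetilde{S}(\varphi_z(\xi))e}{h}_{\Cd}}$, then combine WOT continuity of $z\mapsto S_z$ with injectivity of the Berezin transform. The only notable difference is in the converse direction: the paper simply fixes basis vectors $e_i,e_j$ for which $\abs{\ip{B(S)(z_k)e_i}{e_j}_{\Cd}}\geq\delta$ and reads off $\abs{\ip{B(S_x)(0)e_i}{e_j}_{\Cd}}\geq\delta$ directly from the identity at $\xi=0$, whereas you allow varying unit vectors $e_k,h_k$ and invoke compactness of the sphere plus a three-term splitting---correct, but a little more machinery than needed.
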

\begin{proof}
If $z,\xi\in\Bn$, then we have
\begin{align*}
\ip{B(S_z)(\xi)e_{i}}{e_{j}}_{\Cd}
&=\ip{S\left(U^{(q,\alpha)}\right)^*k_\xi^{(p,\alpha)}e_{i}}
    {\left(U^{(p,\alpha)}\right)^*k_\xi^{(q,\alpha)}e_{j}}_{A^2_{\alpha}}
\\&=\lambda_{(q,\alpha)}(\xi,z)\overline{\lambda_{(p,\alpha)}
    (\xi,z)}\ip{Sk_{\varphi_z(\xi)}^{(p,\alpha)}e_{i}}
    {k_{\varphi_z(\xi)}^{(q,\alpha)}e_{j}}_{A^2_{\alpha}}.
\end{align*}
Thus, $\abs{\ip{B(S_z)(\xi)e_{i}}{e_{j}}_{\Cd}}
=\abs{\ip{B(S)(\varphi_z(\xi))e_{i}}{e_{j}}_{\Cd}}$ 
since $\lambda_{(p,\alpha)}$ and 
$\lambda_{(q,\alpha)}$
are unimodular numbers.
For $x\in M_{\mathcal{A}}\setminus\Bn$ and $\xi\in\Bn$ fixed, if $(z_\omega)$ is
a net in $\Bn$ tending to $x$, the continuity of $\Psi_S$ in the $WOT$ and 
Proposition \ref{WOTCon} give that $B(S_{z_\omega})(\xi)\to B(S_x)(\xi)$, and 
consequently 
$\abs{\ip{B(S)(\varphi_{z_\omega}(\xi))e_{i}}{e_{j}}_{\Cd}}\to 
\abs{\ip{B(S_x)(\xi)e_{i}}{e_{j}}_{\Cd}}$.

Now, suppose that $B(S)(z)$ vanishes as $\abs{z}\to 1$.  
Since $x\in M_{\mathcal{A}}\setminus\Bn$ 
and $z_\omega\to x$, we have that $\abs{z_\omega}\to 1$, and similarly 
$\abs{\varphi_{z_{\omega}}(\xi)}\to 1$. Since $B(S)(z)$ vanishes as we approach the 
boundary, $B(S_x)(\xi)=0$, and since $\xi\in\Bn$ was arbitrary and the Berezin transform
is one-to-one, we see that $S_x=0$.

Conversely, suppose that the Berezin transform does not vanish as we approach the 
boundary. Then there is a sequence $\{z_k\}_{k=1}^{\infty}$ in $\Bn$ such that 
$\abs{z_k}\to 1$ and 
$\abs{\ip{B(S)(z_k)e_{i}}{e_{j}}_{\Cd}}\geq\delta>0$, for all 
\begin{math}i,j=1,\ldots ,d\end{math}. Since $M_{\mathcal{A}}$ is compact, we can 
extract a subnet $(z_{\omega})$ of $\{z_k\}_{k=1}^{\infty}$ 
converging in $M_{\mathcal{A}}$ to
$x\in M_{\mathcal{A}}\setminus\Bn$.  The computations above imply 
$\abs{\ip{B(S_x)(0)e_{i}}{e_{j}}_{\Cd}}\geq\delta>0$, which gives that $S_x\neq 0$.
\end{proof}

\section{Characterization of the Essential Norm on \text{$A^p_{\alpha}\ $}
{Weighted Bergman Spaces}}
\label{Characterization}

We have now collected enough tools to provide a characterization of the 
essential norm 
of an operator on $A^p_{\alpha}$. Even more than in the previous sections, this 
section 
uses the arguments of \cite{MSW} in a nearly verbatim way. Fix $\varrho>0$ and 
let $\{w_m\}_{m=1}^{\infty}$ 
and $\{D_m\}_{m=1}^{\infty}$ be the points and sets of Lemma \ref{StandardGeo}.  Define the measure
$$
\mu_{\varrho}:=\sum_{m=1}^\infty v_{\alpha}(D_m) \delta_{w_m}I_d,
$$
The measure 
\begin{align*}
\nu_{\varrho}:=\sum_{m=1}^\infty v_{\alpha}(D_m) \delta_{w_m}
\end{align*}
is well-known to be an $A^p_{\alpha}$ Carleson measure, so the measure 
$\mu_{\vr}$ is also Carleson. This implies that $T_{\mu_\varrho}:A^p_{\alpha}\to
A^p_{\alpha}$ is bounded. The following lemma is easily deduced from
\cite{MSW}*{Lemma 5.1} (in which the authors refer the reader to 
\cites{L,CR} for a proof), and we omit the proof. 

\begin{lem}
$T_{\mu_\varrho}\to Id_{A^p_\alpha}$ on 
$\mathcal{L}(A^p_{\alpha})$ 
when $\varrho\to 0$.
\end{lem}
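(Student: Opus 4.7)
The plan is to reduce this vector-valued statement directly to the scalar-valued analogue from \cite{MSW}*{Lemma 5.1}, exploiting the fact that $\mu_\varrho$ is the scalar Carleson measure $\nu_\varrho$ tensored with $I_d$, so $T_{\mu_\varrho}$ acts diagonally on the $d$ components of a $\Cd$-valued function.

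First I would unwind the definition of $T_{\mu_\varrho}$. Since $d\mu_\varrho(w)=I_d\,d\nu_\varrho(w)$, for $f=\sum_{i=1}^d f_i e_i\in\apa$ one has
\begin{align*}
T_{\mu_\varrho}f(z)
=\int_{\Bn}\frac{d\mu_\varrho(w)f(w)}{(1-\overline{w}z)^{n+1+\alpha}}
=\sum_{i=1}^d\left(\int_{\Bn}\frac{f_i(w)}{(1-\overline{w}z)^{n+1+\alpha}}d\nu_\varrho(w)\right)e_i
=\sum_{i=1}^d T_{\nu_\varrho}^{sc}(f_i)(z)\,e_i,
\end{align*}
where $T_{\nu_\varrho}^{sc}$ is the usual scalar Toeplitz operator on $A^p_\alpha(\Bn;\C)$ associated to $\nu_\varrho$. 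So $T_{\mu_\varrho}$ acts componentwise via $T_{\nu_\varrho}^{sc}$, and similarly $Id_{A^p_\alpha(\Bn;\Cd)}$ acts componentwise via the scalar identity.

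Next I would compare norms. Using the definition of $\|\cdot\|_{\apa}$ together with Fubini in the summation over coordinates,
\begin{align*}
\norm{T_{\mu_\varrho}f-f}_{\apa}^p
&=\int_{\Bn}\sum_{i=1}^d\abs{T_{\nu_\varrho}^{sc}f_i(z)-f_i(z)}^p dv_\alpha(z)\\
&=\sum_{i=1}^d\norm{(T_{\nu_\varrho}^{sc}-Id)f_i}_{A^p_\alpha(\Bn;\C)}^p\\
&\leq \norm{T_{\nu_\varrho}^{sc}-Id}_{\fl(A^p_\alpha(\Bn;\C))}^p\sum_{i=1}^d\norm{f_i}_{A^p_\alpha(\Bn;\C)}^p\\
&=\norm{T_{\nu_\varrho}^{sc}-Id}_{\fl(A^p_\alpha(\Bn;\C))}^p\norm{f}_{\apa}^p.
\end{align*}
Therefore $\norm{T_{\mu_\varrho}-Id}_{\fl(\apa)}\leq\norm{T_{\nu_\varrho}^{sc}-Id}_{\fl(A^p_\alpha(\Bn;\C))}$.

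Finally I would invoke the cited scalar result: \cite{MSW}*{Lemma 5.1} (whose proof goes back to \cites{L,CR}) asserts that $T_{\nu_\varrho}^{sc}\to Id$ in operator norm on $A^p_\alpha(\Bn;\C)$ as $\varrho\to 0$. Combining this with the inequality above gives the desired convergence. There is no real obstacle here; the only thing to verify carefully is the componentwise action of $T_{\mu_\varrho}$, which hinges on $\mu_\varrho$ being a scalar multiple of $I_d$, so no genuinely new analysis beyond \cite{MSW} is needed.
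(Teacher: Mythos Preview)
Your proposal is correct and is precisely the ``easy deduction'' the paper alludes to: the paper omits the proof entirely, simply citing \cite{MSW}*{Lemma 5.1} for the scalar case, and your componentwise reduction via $\mu_\varrho=\nu_\varrho I_d$ is exactly the intended route.
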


Now choose 
$0<\varrho\leq 1$ so that $\norm{T_{\mu_\varrho}-Id_{A^p_\alpha}}_{\mathcal{L}
(A^p_{\alpha})}<\frac{1}{4}$ 
and consequently 
$\norm{T_{\mu_\varrho}}_{\mathcal{L}(A^p_{\alpha})}$ and 
$\norm{T_{\mu_\varrho}^{-1}}_{\mathcal{L}(A^p_{\alpha})}$ are less
than 
$\frac{3}{2}$.  Fix this value of $\varrho$, and denote $\mu_\varrho:=\mu$ for 
the rest of the paper.

For $S\in\mathcal{L}(A^p_{\alpha}, A^p_{\alpha})$ and $r>0$, let
$$
\mathfrak{a}_S(r):=\varlimsup_{\abs{z}\to 1}\sup\left\{\norm{Sf}_{A^p_{\alpha}}:
f\in T_{\mu 1_{D(z,r)}}(A^p_{\alpha}), \norm{f}_{A^p_{\alpha}}\leq 1\right\},
$$
and then define
$$
\mathfrak{a}_S:=\lim_{r\to 1}\mathfrak{a}_S(r).
$$
Since for $r_1<r_2$ we have $T_{\mu 1_{D(z,r_1)}}(A^p_{\alpha})\subset 
T_{\mu 1_{D(z,r_2)}}(A^p_{\alpha})$ and
$\mathfrak{a}_S(r)\leq\norm{S}_{\mathcal{L}(A^p_{\alpha})}$, this 
limit is 
well defined. We define two other measures of the size of an operator which are 
given in
a very intrinsic and geometric way:
\begin{eqnarray*}
\mathfrak{b}_{S} & := & \sup_{r>0} \varlimsup_{\abs{z}\to 1}
\norm{M_{1_{D(z,r)}}S} _{\mathcal{L}(A^p_{\alpha},L^p_{\alpha})},\\
\mathfrak{c}_{S} & := & \lim_{r\to 1}
\norm{M_{1_{(r\Bn)^c}}S}_{\mathcal{L}(A^p_{\alpha},L^p_{\alpha})}.
\end{eqnarray*}
In the last definition, for notational simplicity, we let 
$\left(r\Bn\right)^c=\Bn\setminus r\Bn$.  Finally, for $S\in 
\mathcal{L}(A^p_{\alpha})$ recall that
$$
\norm{S}_e=\inf\left\{\norm{S-Q}_{\mathcal{L}(A^p_{\alpha},A^p_{\alpha})}:
Q\textnormal{ is compact}\right\}.
$$

We first show how to compute the essential norm of an operator $S$ in terms of the 
operators $S_x$, where $x\in M_{\mathcal{A}}\setminus\Bn$.

\begin{thm}
\label{EssentialviaSx}
Let $S\in\mathcal{T}_{p,\alpha}$.  Then there exists a constant $C(p,\alpha,n)$ 
such that
\begin{equation}
\label{EssentialNorm&Sx}
\sup_{x\in M_{\mathcal{A}}\setminus\Bn}\norm{S_x}_{\mathcal{L}(A^p_\alpha 
)}\lesssim\norm{S}_e\lesssim 
\sup_{x\in M_{\mathcal{A}}\setminus\Bn}\norm{S_x}_{\mathcal{L} 
(A^p_\alpha)}.
\end{equation}
\end{thm}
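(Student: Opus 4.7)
The two inequalities require opposite strategies.

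\emph{Lower bound.} The key point is that every compact operator $Q$ satisfies $Q_x=0$ for every $x\in M_{\mathcal{A}}\setminus\Bn$. Taking a net $z_\omega\to x$, so that $|z_{\omega}|\to 1$, the identity \eqref{Comp} shows that $(U^{(q,\alpha)}_{z_\omega})^* k_\xi^{(p,\alpha)}e$ is a unimodular multiple of $k_{\varphi_{z_\omega}(\xi)}^{(p,\alpha)}e$, which tends weakly to $0$ in $A^p_\alpha$ since $|\varphi_{z_\omega}(\xi)|\to 1$. Compactness of $Q$ upgrades this to norm convergence $Q(U^{(q,\alpha)}_{z_\omega})^* k_\xi^{(p,\alpha)}e\to 0$, and the uniform bound $\|U^{(p,\alpha)}_{z_\omega}\|\le C$ then gives $Q_{z_\omega}k_\xi^{(p,\alpha)}e\to 0$ in norm. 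Density of such kernels together with Proposition \ref{WOTCon} forces $Q_x=0$. Consequently, for any compact $Q$, $S_x=(S-Q)_x$, and the uniform estimate $\|(S-Q)_z\|\le C(p,\alpha)\|S-Q\|$ combined with WOT lower semicontinuity of the norm yields $\|S_x\|\le C\|S-Q\|$. Taking the infimum over compact $Q$ and then the supremum over $x$ yields $\sup_x\|S_x\|\lesssim\|S\|_e$.

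\emph{Upper bound.} Apply Theorem \ref{thm:mainSec3} to $S$ with the atomic matrix-valued Carleson measure $\mu=\mu_\varrho$ fixed at the start of this section: given $\epsilon>0$, we obtain Borel sets $F_j\subset G_j$ with $\{F_j\}$ pairwise disjoint covering $\Bn$, bounded overlap for $\{G_j\}$, uniform $\beta$-diameter bound, and
\[
\Big\|ST_\mu-\sum_{j=1}^{\infty}M_{1_{F_j}}ST_{\mu 1_{G_j}}\Big\|_{\mathcal{L}(A^p_\alpha,L^p_\alpha)}<\epsilon.
\]
Composing with the Bergman projection $P_\alpha$ on the left and with $T_\mu^{-1}$ (of norm at most $3/2$) on the right gives
\[
\Big\|S-\sum_{j=1}^{\infty}P_\alpha M_{1_{F_j}}ST_{\mu 1_{G_j}}T_\mu^{-1}\Big\|_{\mathcal{L}(A^p_\alpha)}\lesssim\epsilon.
\]
Since $\mu$ is supported on the discrete set $\{w_m\}$, each $T_{\mu 1_{G_j}}$ is of finite rank, so for any $r<1$ the partial sum over $\{j:G_j\subset r\Bn\}$ is compact; hence $\|S\|_e$ is controlled by $O(\epsilon)$ plus the operator norm of the remaining tail indexed by $\{j:G_j\cap(r\Bn)^c\ne\emptyset\}$.

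\emph{Tail estimate and main obstacle.} For the tail, fix $f$ with $\|f\|_{A^p_\alpha}\le 1$. Disjointness of $\{F_j\}$ yields the $\ell^p$-splitting
\[
\Big\|\sum_{j\in J}M_{1_{F_j}}ST_{\mu 1_{G_j}}T_\mu^{-1}f\Big\|_{L^p_\alpha}^{p}=\sum_{j\in J}\|M_{1_{F_j}}ST_{\mu 1_{G_j}}T_\mu^{-1}f\|_{L^p_\alpha}^{p}.
\]
Choose $z_j\in G_j$; conjugation by the isometric involution $U^{(p,\alpha)}_{z_j}$, together with the twisting $T_{b_{z_j}I_d}^{\pm 1}$ of uniformly bounded norm, identifies $S$ with its transform $S_{z_j}$. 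Each summand is then bounded by $C\|S_{z_j}\|\cdot\|h_j\|_{A^p_\alpha}$ for localized pieces $h_j$ of $f$ which, via the Carleson estimates of Lemma \ref{lem:car} and Lemma \ref{CM-Cor}, satisfy $\sum_j\|h_j\|_{A^p_\alpha}^{p}\lesssim\|f\|_{A^p_\alpha}^{p}$. Since $|z_j|\to 1$ for the tail indices as $r\to 1$, compactness of $M_{\mathcal{A}}$ allows extraction of a subnet $z_{j_k}\to x\in M_{\mathcal{A}}\setminus\Bn$, and Proposition \ref{SOTCon} combined with WOT lower semicontinuity bounds $\limsup\|S_{z_{j_k}}\|\le\sup_y\|S_y\|$; letting $\epsilon\to 0$ completes the proof. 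The technical heart --- and principal obstacle --- is exactly this tail-to-$S_x$ reduction: one must unfold the conjugation $S=U^{(p,\alpha)}_{z_j}S_{z_j}T_{b_{z_j}I_d}U^{(p,\alpha)}_{z_j}$ so that the localized pieces $h_j$ on the right genuinely carry $\ell^p$-mass summing back to $\|f\|_{A^p_\alpha}^{p}$, and then pass uniformly in $J$ from the discrete supremum $\sup_j\|S_{z_j}\|$ to $\sup_{x\in M_{\mathcal{A}}\setminus\Bn}\|S_x\|$. This is exactly the step requiring $S\in\mathcal{T}_{p,\alpha}$, since only then does Proposition \ref{SOTCon} provide the SOT convergence $S_{z_{j_k}}\to S_x$ needed to make the extraction work.
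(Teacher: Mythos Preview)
Your lower bound is fine and matches the paper's approach in spirit. The gap is in the upper bound, at precisely the step you yourself flag as the ``principal obstacle.''

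You bound the tail by $\sup_{j\in J}\|S_{z_j}\|$ (times an $\ell^p$-sum that you correctly control), and then assert that ``Proposition~\ref{SOTCon} combined with WOT lower semicontinuity bounds $\limsup\|S_{z_{j_k}}\|\le\sup_y\|S_y\|$.'' This inequality is false, and lower semicontinuity gives exactly the opposite direction: if $S_{z_\omega}\to S_x$ in WOT (or SOT), one gets $\|S_x\|\le\liminf_\omega\|S_{z_\omega}\|$, not an upper bound on $\limsup_\omega\|S_{z_\omega}\|$. In fact $\|S_z\|\simeq\|S\|$ uniformly in $z\in\Bn$, since $U_z^{(p,\alpha)}$ is isometric and $(U_z^{(q,\alpha)})^*=T_{b_z I_d}U_z^{(p,\alpha)}$ has uniformly bounded norm and inverse. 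So for any nonzero compact $S$ your asserted bound would force $0<\|S\|\lesssim\sup_{y\in M_{\mathcal{A}}\setminus\Bn}\|S_y\|=0$, a contradiction.

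The paper's route around this is essential and not merely cosmetic. Rather than bounding the tail by $\sup_j\|S_{z_j}\|$, the paper introduces the auxiliary quantity $\mathfrak{a}_S(r)$, extracts a near-extremal sequence $f_j\in T_{\mu 1_{D(z_j,r)}}(A^p_\alpha)$, and shows that $(U^{(q,\alpha)}_{z_j})^*f_j$ converges in norm to a \emph{single fixed} function $h$ (this uses that each $f_j$ is a finite sum of kernels supported in $D(z_j,r)$, together with a separation argument to pin down the coefficients). Then SOT convergence applied to this one vector gives $\|S_{z_\omega}h\|\to\|S_xh\|\le\|S_x\|\,\|h\|$, which is the correct direction. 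Your approach cannot be repaired without inserting exactly this kind of ``single test vector'' reduction; trying to pass through $\|S_{z_j}\|$ directly will always run into the wrong-direction semicontinuity.
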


\begin{proof}
For $S$ compact, \eqref{EssentialNorm&Sx} is easy to demonstrate.
Since $k_\xi^{(p,\alpha)}\to 0$ weakly as $\abs{\xi}\to 1$, then 
\begin{math}\ip{k_{\xi}^{p,\alpha}e}{f}_{A_{\alpha}^{2}}=
\sum_{i=1}^{d}\ip{e}{e_i}_{\Cd}\ip{k_{\xi}^{p,\alpha}e_{i}}{f}_{A_{\alpha}^{2}}
\to 0
\end{math}
for every $f\in A_{\alpha}^{q}$ and so $k^{p,\alpha}_{\xi}e\to 0$ weakly as 
$\abs{\xi}\to
1$. 
Therefore, 
$\norm{Sk_\xi^{(p,\alpha)}e}_{A^p_{\alpha}}$ 
goes to $0$ as well. Thus, we have
\begin{align}
\label{Vanishing}
\abs{\ip{\widetilde{S}(\xi)e}{h}_{\Cd}}=
\abs{\ip{S k_\xi^{(p,\alpha)}e}{k_\xi^{(q,\alpha)}h}_{A^2_\alpha}}\leq
\norm{Sk_\xi^{(p,\alpha)}e}_{A^p_{\alpha}}
\norm{k_\xi^{(q,\alpha)}h}_{A^q_{\alpha}}\approx
\norm{Sk_\xi^{(p,\alpha)}e}_{A^p_{\alpha}}.
\end{align}
Hence, the compactness of $S$ implies that the Berezin transform vanishes as 
$\abs{\xi}\to 1$.
Then  Proposition \ref{BerezinVanish} gives that $S_x=0$ for all 
$x\in M_{\mathcal{A}}\setminus\Bn$.

Now let $S$ be any bounded operator on $A^p_{\alpha}$ and suppose that $Q$ is a 
compact operator on $A^p_{\alpha}$.
Select $x\in M_{\mathcal{A}}\setminus \Bn$ and a net $(z_\omega)$ in $\Bn$ tending to 
$x$.
Since the maps $U_{z_\omega}^{(p,\alpha)}$ and $U_{z_\omega}^{(q,\alpha)}$ are 
isometries on 
$A^p_{\alpha}$ and $A^q_{\alpha}$,
we have
$$
\norm{S_{z_{\omega}}+Q_{z_{\omega}}}_{\mathcal{L}(A^p_\alpha)}\leq
\norm{S+Q}_{\mathcal{L}(A^p_\alpha)}.
$$
Since $S_{z_{\omega}}+Q_{z_{\omega}}\to S_x$ in the $WOT$, passing to the limit 
we get
$$
\norm{S_x}_{\mathcal{L}(A^p_\alpha)}\lesssim 
\varliminf\norm{S_{z_{\omega}}+
Q_{z_{\omega}}}_{\mathcal{L}(A^p_\alpha)}\leq \norm{S+Q}_{\mathcal{L}
(A^p_\alpha)},
$$
which gives
$$
\sup_{x\in M_{\mathcal{A}}\setminus\Bn}
\norm{S_x}_{\mathcal{L}(A^p_\alpha,)}\lesssim\norm{S}_e,
$$
the first inequality in \eqref{EssentialNorm&Sx}.  It only remains to address 
the last
inequality.  To accomplish this, we will instead 
prove that
\begin{equation}
\label{AlphaControlled}
\mathfrak{a}_S\lesssim \sup_{x\in M_{\mathcal{A}}\setminus\Bn}
\norm{S_x}_{\mathcal{L}(A^p_\alpha)}.
\end{equation}
Then we compare this with the first inequality in \eqref{Redux2}, 
$\norm{S}_e\lesssim\mathfrak{a}_S$, shown below, to obtain
$$
\norm{S}_e\lesssim 
\sup_{x\in M_{\mathcal{A}}\setminus\Bn}\norm{S_x}_{\mathcal{L}(A^p_\alpha)}.
$$
Also note that if \eqref{AlphaControlled} holds, then
\begin{equation}
\label{LastStep}
\mathfrak{a}_S\lesssim\norm{S}_e
\end{equation}
is also true.  We now turn to addressing \eqref{AlphaControlled}.  It suffices 
to demonstrate that
$$
\mathfrak{a}_S(r)\lesssim\sup_{x\in M_{\mathcal{A}}\setminus\Bn}
\norm{S_x}_{\mathcal{L}(A^p_\alpha)}\quad\ \forall r>0.
$$
Fix a radius $r>0$. By the definition of $\mathfrak{a}_S(r)$ there is a sequence
$\{z_j\}_{j=1}^{\infty}\subset\Bn$ tending to
$\partial\Bn$ and a normalized sequence of functions $f_j
\in T_{\mu 1_{D(z_j,r)}} (A^p_{\alpha})$ with
$\norm{Sf_j}_{A^p_{\alpha}}\to\mathfrak{a}_S(r)$.  To each $f_j$ we have a 
corresponding $h_j\in A^p_{\alpha}$, and then
\begin{align*}
f_j(w)&=T_{\mu 1_{D(z_j,r)}}h_j(w) 
\\&= \sum_{w_m\in D(z_j,r)}\frac{v_\alpha(D_m)}
 {(1-\overline{w_m}w)^{n+1+\alpha}}h_j(w_m)
\\&=\sum_{w_m\in D(z_j,r)} 
\frac{(1-\abs{w_m}^2)^{\frac{n+1+\alpha}{q}}}
    {(1-\overline{w_m}w)^{n+1+\alpha}}a_{j,m}
\\& = \sum_{w_m\in D(z_j,r)} k_{w_m}^{(p,\alpha)}(w)a_{j,m},
\end{align*}
where $a_{j,m}=v_\alpha(D_m)(1-\abs{w_m}^2)^{-\frac{n+1+\alpha}{q}}h_j(w_m)$.  
We then have that
$$
\left(U^{(q,\alpha)}_{z_j}\right)^{*}f_j(w)= \sum_{\varphi_{z_j}(w_m)\in D(0,r)} 
k_{\varphi_{z_j}(w_m)}^{(p,\alpha)}(w)a_{j,m}',
$$
where $a_{j,m}'$ is simply the original constant $a_{j,m}$ multiplied by the 
unimodular constant $\lambda_{(q,\alpha)}$.

Observe that the points $\abs{\varphi_{z_j}(w_m)}\leq\tanh r$.  For $j$ fixed, 
arrange the points $\varphi_{z_j}(w_m)$ such that $\abs{\varphi_{z_j}(w_m)}\leq
\abs{\varphi_{z_j}(w_{m+1})}$ and 
$\textnormal{arg}\,\varphi_{z_j}(w_m)\leq\textnormal{arg}\, \varphi_{z_j}(w_{m+1})$.  
Since the M\"obius map $\varphi_{z_j}$ preserves the hyperbolic distance between the 
points $\{w_m\}_{m=1}^{\infty}$ we have for $m\neq k$ that
$$
\beta(\varphi_{z_j}(w_m),\varphi_{z_j}(w_k))=\beta(w_m,w_k)\geq\frac{\varrho}{4}>0.
$$
Thus, there can only be at most $N_j\leq M(\varrho, r)$ points in the collection 
$\varphi_{z_j}(w_m)$ belonging to the disc $D(0,z_j)$.  By passing to a subsequence, we 
can assume that $N_j=M$ and is independent of $j$.

For the fixed $j$, and $1\leq m\leq M$, select scalar--valued
$g_{j,k}\in H^\infty$ with
$\norm{g_{j,k}}_{H^\infty}\leq C(\tanh r,\frac{\varrho}{4})$, such that 
$g_{j,k}(\varphi_{z_j}(w_m))=\delta_{k,m}$,
the Kronecker delta, when $1\leq k\leq M$.
The existence of the functions is easy to 
deduce from a result of Berndtsson \cite{B}, 
see also \cite{Sua}.  We then have
\begin{align*}
\ip{\left(U_{z_j}^{(q,\alpha)}\right)^{*}f_j}{g_{j,k}e}_{A^2_{\alpha}}  
&=  \int_{\Bn}\ip{\left(U_{z_j}^{(q,\alpha)}\right)^{*}f_j(w)}
{g_{j,k}(w)e}_{\Cd}dv_{\alpha}(w)
\\&=\int_{\Bn} \sum_{\varphi_{z_j}(w_m)\in D(0,r)} 
k_{\varphi_{z_j}(w_m)}^{(p,\alpha)}(w)
\ip{a_{j,m}'}{g_{j,k}(w)e}_{\Cd}dv_{\alpha}(w)
\\&= \sum_{\varphi_{z_j}(w_m)\in D(0,r)} 
\left(1-\abs{\varphi_{z_j}(w_m)}^2\right)^{\frac{n+1+\alpha}{q}}\ip{a_{j,m}'} 
{g_{j,k}(\varphi_{z_j}(w_m))e}_{\Cd}\\
 \\&=  
\left(1-\abs{\varphi_{z_j}(w_k)}^2\right)^{\frac{n+1+\alpha}{q}}
\ip{a_{j,k}'}{e}_{\Cd}.
\end{align*}
This expression implies that the sequence $\abs{\ip{a_{j,k}'}{e}_{\Cd}}
\leq C=C(n,p,\varrho,r,\alpha)$ independently of $j$ and $k$,
because $g_{j,k}\in H^\infty$ has norm controlled by 
$C(r,\varrho)$, $\left( U^{(q,\alpha)}_z\right)^*$ is a bounded operator, 
and $f_j$ is a normalized sequence of functions in $A^p_\alpha$.

Now $(\varphi_{z_j}(w_1),\ldots,\varphi_{z_j}(w_M), 
\ip{a_{j,1}'}{e}_{\Cd},\ldots,\ip{a_{j,M}'}{e}_{\Cd})\in\C^{M(n+1)}$ is a 
bounded sequence in $j$,
and passing to a subsequence if necessary, we can assume that converges to a point 
$(v_1,\ldots, v_M,\ip{a_{1}'}{e}_{\Cd},\ldots,\ip{a_{M}'}{e}_{\Cd})$.
Here $\abs{v_k}\leq\tanh r$ and $\abs{a_k'}\leq C$.  This implies
$$
\ip{\left(U^{(q,\alpha)}_{z_j}\right)^{*}f_j}{e}_{\Cd}\to \sum_{k=1}^M 
\ip{k_{v_k}^{(p,\alpha)}a_k'}{e}_{\Cd}
$$
which means that:
\begin{align*}
\lp U_{z_{j}}^{(q,\alpha)}\rp^{\ast}f_{j}\to
\sum_{k=1}^M k_{v_k}^{(p,\alpha)}a_k'=:h
\end{align*}
in the $L^p_{\alpha}$ norm and moreover,
\begin{align*}
\norm{\sum_{k=1}^M k_{v_k}^{(p,\alpha)}a_k'}_{\lpa}=
\lim_{j\to\infty}\norm{\lp U_{z_{j}}^{(q,\alpha)}\rp^{\ast}f_{j}}_{\lpa}.
\end{align*}
Since the operator $U_{z_j}^{(p,\alpha)}$ is isometric and 
$\norm{S_{z_j}}_{\mathcal{L}(A^p_\alpha, A^p_{\alpha})}$
is bounded independently of $j$,
$$
\mathfrak{a}_S(r)=\lim_{j\to\infty}\norm{S f_j}_{A^p_\alpha}=
\lim_{j\to\infty}
\norm{S_{z_j}(U_{z_j}^{(q,\alpha)})^* f_j}_{A^p_\alpha}=
\lim_{j\to\infty}\norm{S_{z_j} h}_{A^p_\alpha}.
$$
Since $\abs{z_j}\to 1$, by using the compactness of $M_{\mathcal{A}}$ it is 
possible to extract a subnet $(z_\omega)$
which converges to some point $x\in M_{\mathcal{A}}\setminus\Bn$.  
Then $S_{z_\omega}h\to S_x h$ in $A^p_{\alpha}$,  so
$$
\mathfrak{a}_S(r)=\lim_{\omega}\norm{S_{z_\omega} h}_{A^p_\alpha}=
\norm{S_xh}_{A^p_\alpha}\lesssim\norm{S_x}_{\mathcal{L}(A^p_\alpha)}\lesssim
\sup_{x\in M_{\mathcal{A}}\setminus\Bn}
\norm{S_x}_{\mathcal{L}(A^p_\alpha)}.
$$
The above limit uses the continuity in the $SOT$ as guaranteed by Proposition 
\ref{SOTCon}.
\end{proof}

\begin{thm}
Let\/ $1<p<\infty$, $\,\alpha>-1$ and\/ $S\in \mathcal{T}_{p,\alpha}$.  Then 
there exist constants depending only on $n$, $p$, and $\alpha$ such that:
$$
\mathfrak{a}_S\approx\mathfrak{b}_S\approx\mathfrak{c}_S\approx\norm{S}_e.
$$
\end{thm}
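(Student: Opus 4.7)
The plan is to close a circle of inequalities that, combined with the already-established bound $\mathfrak{a}_S\lesssim\|S\|_e$ (equation~\eqref{LastStep} in the proof of Theorem~\ref{EssentialviaSx}), forces the four quantities to be equivalent. Concretely, I will establish the easy chain $\mathfrak{b}_S\le\mathfrak{c}_S\le\|S\|_e$ together with the two harder bounds $\|S\|_e\lesssim\mathfrak{a}_S$ and $\|S\|_e\lesssim\mathfrak{b}_S$; both of the latter rest on the localization Theorem~\ref{thm:mainSec3}.

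For the easy chain, $\mathfrak{c}_S\le\|S\|_e$ reduces to showing $\|M_{1_{(r\Bn)^c}}Q\|_{\fl(\apa,\lpa)}\to 0$ as $r\to 1$ for every compact $Q\in\fl(\apa)$: a finite $\epsilon$-net in the precompact set $Q(\overline{B_{\apa}})$ plus dominated convergence applied to each of the finitely many net elements yields the claim, and then writing $S=(S-Q)+Q$ and taking the infimum over compact $Q$ finishes. For $\mathfrak{b}_S\le\mathfrak{c}_S$, note that for fixed $r>0$ and any $r_0<1$, once $|z|$ is close enough to $1$ we have $D(z,r)\subset(r_0\Bn)^c$, hence $\|M_{1_{D(z,r)}}S\|\le\|M_{1_{(r_0\Bn)^c}}S\|$; passing to $\varlimsup_{|z|\to 1}$, then $r_0\to 1$, then $\sup_r$ gives the bound.

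The crux is $\|S\|_e\lesssim\mathfrak{a}_S$. Given $\epsilon>0$, apply Theorem~\ref{thm:mainSec3} to $S$ with our fixed matrix Carleson measure $\mu$ (chosen so that $\|T_\mu-\mathrm{Id}\|<1/4$) to obtain Borel sets $F_j\subset G_j$ with $F_j$ pairwise disjoint covering $\Bn$, $\diam_\beta G_j\le\mathfrak{d}=\mathfrak{d}(p,S,\epsilon)$, finite overlap of $\{G_j\}$, and
$$
\Bigl\|ST_\mu-\sum_{j=1}^\infty M_{1_{F_j}}ST_{\mu 1_{G_j}}\Bigr\|_{\fl(\apa,\lpa)}<\epsilon.
$$
Pick $z_j\in F_j$ and split $\sum_j=\sum_{|z_j|\le r_0}+\sum_{|z_j|>r_0}$ for $r_0<1$ to be chosen. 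The ``interior'' sum is a compact operator: only finitely many $G_j$'s meet $r_0\Bn$ (each $G_j$ has bounded $\beta$-diameter and contains a cell of a priori positive $v_\alpha$-measure), and each $T_{\mu 1_{G_j}}$ is itself finite rank since the atoms $\{w_m\}$ of $\mu$ are $\beta$-separated and $G_j$ is $\beta$-bounded. For the tail, using disjointness of the $F_j$, the containment $T_{\mu 1_{G_j}}f\in T_{\mu 1_{D(z_j,\mathfrak{d})}}(\apa)$ (realized by Bergman-space interpolation at the finitely many atoms of $\mu$ in $D(z_j,\mathfrak{d})$), the definition of $\mathfrak{a}_S(\mathfrak{d})$ (applicable once $|z_j|$ is large enough), and Lemma~\ref{CM-Cor} combined with finite overlap of $\{G_j\}$, one has
$$
\Bigl\|\sum_{|z_j|>r_0} M_{1_{F_j}}ST_{\mu 1_{G_j}}f\Bigr\|_{\lpa}^p=\sum_{|z_j|>r_0}\|M_{1_{F_j}}ST_{\mu 1_{G_j}}f\|_{\lpa}^p\lesssim\mathfrak{a}_S(\mathfrak{d})^p\sum_j\|T_{\mu 1_{G_j}}f\|_{\apa}^p\lesssim\mathfrak{a}_S^p\,\|f\|_{\apa}^p.
$$
Composing with $T_\mu^{-1}$ (bounded by $3/2$) and applying the Bergman projection to promote the $\fl(\apa,\lpa)$-approximation to a compact $\fl(\apa)$-approximation yields $\|S-Q'\|_{\fl(\apa)}\lesssim\mathfrak{a}_S+\epsilon$ for a compact $Q'$, and sending $\epsilon\to 0$ completes this step.

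The companion bound $\|S\|_e\lesssim\mathfrak{b}_S$ is proved by the same decomposition, with the tail estimated instead via $\|M_{1_{F_j}}ST_{\mu 1_{G_j}}f\|_{\lpa}\le\|M_{1_{F_j}}S\|_{\fl(\apa,\lpa)}\|T_{\mu 1_{G_j}}f\|_{\apa}\le(\mathfrak{b}_S+\delta)\|T_{\mu 1_{G_j}}f\|_{\apa}$, using $F_j\subset D(z_j,\mathfrak{d})$ with $|z_j|$ near the boundary to saturate $\mathfrak{b}_S$. The main obstacle is the double bookkeeping linking $\epsilon$ (controlling how closely Theorem~\ref{thm:mainSec3} approximates $ST_\mu$ and fixing $\mathfrak{d}$) to $r_0$ (determining when $\mathfrak{a}_S(\mathfrak{d})$ or $\mathfrak{b}_S$ is nearly saturated on the tail); one must verify that after sending $\epsilon\to 0$ and $r_0\to 1$ in the correct order only the limiting quantities $\mathfrak{a}_S=\lim_{r\to 1}\mathfrak{a}_S(r)$ and $\mathfrak{b}_S$ survive on the right-hand side, with the constants from Lemma~\ref{CM-Cor} and the Bergman projection absorbed into the implicit $\lesssim$. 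Once this is handled, the whole argument is a direct matrix-valued transcription of the scalar one in \cite{MSW}.
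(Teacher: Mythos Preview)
Your proposal is correct and follows essentially the same route as the paper. Both arguments rest on Theorem~\ref{thm:mainSec3}: one splits the localized sum $\sum_j M_{1_{F_j}}ST_{\mu 1_{G_j}}$ into a finite-rank ``interior'' piece and a tail, then estimates the tail by $\mathfrak{a}_S(\mathfrak{d})$ (respectively by $\sup_{|z|\geq\gamma}\|M_{1_{D(z,\mathfrak{d})}}S\|$), exactly as the paper does via its auxiliary quantity $\varlimsup_m\|S_m\|$ and inequalities \eqref{Redux1}--\eqref{Redux2}.

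The one substantive difference is how you obtain $\mathfrak{c}_S\lesssim\|S\|_e$. The paper routes this through the localization machinery again (inequality \eqref{Redux1-1}: $\mathfrak{c}_S\lesssim\varlimsup_m\|S_m\|$), whereas you give the direct soft argument that $\|M_{1_{(r\Bn)^c}}Q\|_{\fl(\apa,\lpa)}\to 0$ for every compact $Q$ via a finite $\epsilon$-net in $Q(\overline{B_{\apa}})$. Your argument is more elementary and avoids a second pass through Theorem~\ref{thm:mainSec3}; the paper's route has the (slight) advantage of keeping everything phrased in terms of the single auxiliary quantity $\varlimsup_m\|S_m\|$, so that the equivalence of all four numbers falls out of two clean chains rather than three separate inequalities. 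Either organization is fine.
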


\begin{proof}
By Theorem \ref{thm:mainSec3} there are Borel sets $F_j\subset G_j\subset\Bn$ 
such that
\begin{description}
\item[(i)] $\Bn=\cup_{j=1}^{\infty} F_j$;
\item[(ii)] $F_j\cap F_k=\emptyset$ if $j\neq k$;
\item[(iii)] each point of $\Bn$ lies in no more than $N(n)$ of the sets $G_j$;
\item[(iv)] $\textnormal{diam}_{\beta}\, G_j\leq 
\mathfrak{d}(p,S,\epsilon)$
\end{description}
and
\begin{equation}
\label{RecallEst}
\norm{ST_\mu-\sum_{j=1}^{\infty} M_{1_{F_j}}
ST_{\mu 1_{G_j}}}_{\mathcal{L}(A^p_{\alpha}, L^p_{\alpha})}<\epsilon.
\end{equation}
Set
$$
S_m=\sum_{j=m}^{\infty} M_{1_{F_j}} S T_{\mu 1_{G_j}}.
$$
Next, we consider one more measure of the size of $S$,
$$
\varlimsup_{m\to\infty} \norm{\sum_{j=m}^{\infty} M_{1_{F_j}} S 
T_{\mu 1_{G_j}}}_{\mathcal{L}(A^p_{\alpha}, L^p_{\alpha})}=
\varlimsup_{m\to\infty}\norm{S_m}_{\mathcal{L}(A^p_{\alpha}, L^p_{\alpha})}.
$$
First some observations.  Since every $z\in\Bn$ belongs to only $N(n)$ sets 
$G_j$, 
Lemma~\ref{CM-Cor} gives
\begin{align}\label{eqn:est1}
\sum_{j=m}^{\infty}\norm{T_{\mu 1_{G_j}}f}_{A^p_{\alpha}}^p\lesssim 
\sum_{j=1}^{\infty} 
\norm{1_{G_j}f}_{L^p(\Bn,\Cd;\mu)}^p \lesssim\norm{f}_{A^p_{\alpha}}^p.
\end{align}
Also, since $T_\mu$ is bounded and invertible, we have that 
$\norm{S}_e\approx\norm{ST_\mu}_e$.  
Finally, we will need to compute both norms in $\mathcal{L}(A^p_{\alpha}, 
A^p_{\alpha})$
and $\mathcal{L}(A^p_{\alpha}, L^p_{\alpha})$.  When necessary, we will denote 
the respective essential norms as $\norm{\,\cdot\,}_{e}$ and 
$\norm{\,\cdot\,}_{ex}$.  It is easy to show that
$$
\norm{R}_{ex}\leq\norm{R}_e\leq\norm{P_\alpha}_{\mathcal{}(L^p_{\alpha},
A^p_{\alpha})}\norm{R}_{ex}.
$$
The strategy behind the proof of the theorem is to demonstrate the following 
string of inequalities
\begin{eqnarray}
\mathfrak{b}_S & \leq & \mathfrak{c}_S\,\,\lesssim \,\,
\varlimsup_{m\to\infty}\norm{S_m}_{\mathcal{L}
(A^p_{\alpha}, L^p_{\alpha})}\,\,\lesssim\,\,\mathfrak{b}_S\label{Redux1}\\
\norm{S}_{e} & \lesssim & \varlimsup_{m\to\infty}
\norm{S_m}_{\mathcal{L}(A^p_{\alpha}, 
L^p_{\alpha})}\,\,\lesssim\,\, \mathfrak{a}_S\,\,\lesssim\,\, 
\norm{S}_e\label{Redux2}.
\end{eqnarray}

The implied constants in all these estimates depend only on $p,\alpha$ and the 
dimension of the domain, $n$ and the dimension of the range, $d$. Combining 
\eqref{Redux1} and \eqref{Redux2} we have the theorem. We prove now the first 
two inequalities in \eqref{Redux2}.

Fix  $f\in A^p_{\alpha}$ of norm 1 and note that
\begin{align}
\norm{S_m f}^p_{L^p_{\alpha}}
&=\sum_{j=m}^{\infty} \norm{M_{1_{F_j}} ST_{\mu 1_{G_j}}f}_{L^p_{\alpha}}^p
\notag
\\&= \sum_{j=m}^{\infty}\left(\frac{\norm{M_{1_{F_j}} 
    ST_{\mu 1_{G_j}}f}_{L^p_{\alpha}}}
    {\norm{T_{\mu 1_{G_j}}f}_{A^p_{\alpha}}}\right)^{p}
    \norm{T_{\mu 1_{G_j}}f}_{A^p_{\alpha}}^p\notag
\\&\leq\sup_{j\geq m}\sup\left\{\norm{M_{1_{F_j}} Sg}^p_{L^p_{\alpha}}: 
    g\in T_{\mu 1_{G_j}}
    (A^p_{\alpha}),\norm{g}_{A^p_{\alpha}}= 1\right\}\sum_{j\geq m}
    \norm{T_{\mu 1_{G_j}}f}_{A^p_{\alpha}}^p\notag
\\&\lesssim \sup_{j\geq m}\sup\left\{\norm{M_{1_{F_j}} Sg}^p_{L^p_{\alpha}}: 
    g\in T_{\mu 1_{G_j}}(A^p_{\alpha}),\norm{g}_{A^p_{\alpha}}= 1\right\}
    \label{Last}.
\end{align}

In the last step we use the estimate in (\ref{eqn:est1}).
Since $\textnormal{diam}_{\beta}\, G_j\leq \mathfrak{d}$, by selecting $z_j\in G_j$ we have
$G_j\subset D(z_j,d)$, 
and so $T_{\mu 1_{G_j}}(A^p_\alpha)\subset T_{1_{\mu D(z_j,\mathfrak{d})}}(A^p_\alpha)$.
Since $z_j$ approaches the boundary, we can select an additional sequence $0
<\gamma_m<1$ tending to $1$ such that $\abs{z_j}\geq\gamma_m$ when $j\geq m$.  
Using \eqref{Last} we find that
\begin{align}
\norm{S_m}_{\mathcal{L}(A_\alpha^p,L_\alpha^p)} 
&\lesssim  \sup_{j\geq m} \sup\left\{\norm{M_{1_{F_j}} Sg}_{L^p_{\alpha}}: 
    g\in T_{\mu 1_{G_j}}(A^p_{\alpha}),\norm{g}_{A^p_{\alpha}}
    = 1\right\}
\\&\lesssim \sup_{\abs{z_j}\geq \gamma_m}\sup
    \left\{\norm{M_{1_{D(z_j,d)}} Sg}_{L^p_{\alpha}}: g\in 
    T_{\mu 1_{D(z_j,\mathfrak{d})}}(A^p_{\alpha}),
    \norm{g}_{A^p_{\alpha}}= 1\right\}
    \label{important}
\\&\lesssim \sup_{\abs{z_j}\geq \gamma_m}\sup\left\{\norm{Sg}_{L^p_{\alpha}}: 
    g\in T_{\mu 1_{D(z_j,\mathfrak{d})}}(A^p_{\alpha}),
    \norm{g}_{A^p_{\alpha}}= 1\right\}
    \notag.
\end{align}
Since $\gamma_m\to 1$ as $m\to \infty$, we get
$$
\varlimsup_{m\to\infty} \norm{S_m}_{\mathcal{L}(A_\alpha^p,L_\alpha^p)}\lesssim 
\mathfrak{a}_S(\mathfrak{d}).
$$
From \eqref{RecallEst} we see that
$$
\norm{ST_\mu}_{ex}\leq\varlimsup_{m\to\infty} 
\norm{S_m}_{\mathcal{L}(A_\alpha^p,L_\alpha^p)}+\epsilon
\lesssim \mathfrak{a}_S(\mathfrak{d})
+\epsilon\lesssim\mathfrak{a}_S+\epsilon,
$$
giving $\norm{ST_\mu}_{ex}\leq\varlimsup_{m\to\infty} 
\norm{S_m}_{\mathcal{L}(A_\alpha^p,L_\alpha^p)}
\lesssim\mathfrak{a}_S$,
since $\epsilon$ is arbitrary.  Therefore,
\begin{equation}
\norm{S}_e\approx\norm{ST_\mu}_{e}\lesssim \norm{ST_\mu}_{ex}\leq \varlimsup_{m}
\norm{S_m}_{\mathcal{L}(A_\alpha^p,L_\alpha^p)}\lesssim \mathfrak{a}_S.
\end{equation}
This gives the first two inequalities in \eqref{Redux2}.
The remaining inequality is simply \eqref{LastStep}, which was proved in Theorem
\ref{EssentialviaSx}.

We now consider \eqref{Redux1}.  If $0<r<1$, there exists a positive integer 
$m(r)$ such that $\bigcup_{j<m(r)}F_j\subset r\Bn$. Then
\begin{align*}
\norm{M_{1_{(r\Bn)^c}}S}_{\mathcal{L}(A_\alpha^p,L_\alpha^p)}
\norm{T^{-1}_\mu}^{-1}_{\mathcal{L}(A_\alpha^p,A_\alpha^p)} &\leq  
\norm{M_{1_{(r\Bn)^c}}ST_\mu}_{\mathcal{L}(A_\alpha^p,L_\alpha^p)}
\\& \leq  \norm{M_{1_{(r\Bn)^c}}
  \left(ST_\mu-\sum_{j=1}^{\infty} 
  M_{1_{F_j}}ST_{1_{G_j}\mu}\right)}_{\mathcal{L}(A_\alpha^p,L_\alpha^p)}
\\&+\norm{M_{1_{(r\Bn)^c}}\sum_{j=1}^{\infty} 
  M_{1_{F_j}}ST_{1_{G_j}\mu}}_{\mathcal{L}(A_\alpha^p,L_\alpha^p)}
\\&\leq \epsilon +\norm{\sum_{j=m(r)}^{\infty}
  M_{1_{F_j}}ST_{1_{G_j}\mu}}_{\mathcal{L}(A_\alpha^p,L_\alpha^p)}
\\&=\epsilon+
  \norm{S_{m(r)}}_{\mathcal{L}(A_\alpha^p,L_\alpha^p)}.
\end{align*}

This string of inequalities easily yields
\begin{equation}
\label{Redux1-1}
\mathfrak{c}_{S}=\varlimsup_{r\to 1}
\norm{M_{1_{(r\Bn)^c}}S}_{\mathcal{L}(A^p_{\alpha},
L^p_{\alpha})}\lesssim \varlimsup_{m\to\infty} 
\norm{S_m}_{\mathcal{L}(A_\alpha^p,L_\alpha^p)}.
\end{equation}
Also, \eqref{important} gives that
\begin{equation}
\label{Redux1-2}
\varlimsup_{m\to\infty} \norm{S_m}_{\mathcal{L}(A_\alpha^p,L_\alpha^p)}\lesssim
\varlimsup_{\abs{z}\to 1}
\norm{M_{1_{D(z,r)}}S} _{\mathcal{L}(A^p_{\alpha},L^p_{\alpha})}\lesssim
\mathfrak{b}_S.
\end{equation}
Combining the trivial inequality $\mathfrak{b}_S\leq\mathfrak{c}_S$ with 
\eqref{Redux1-1} and \eqref{Redux1-2} we obtain \eqref{Redux1}.
\end{proof}

We can now deduce two results from these theorems. First, we give a way to 
compute the essential norm of an operator. 
\begin{cor}
Let $\alpha>-1$ and $1<p<\infty$ and $S\in\mathcal{T}_{p,\alpha}$.  Then
$$
\norm{S}_e\approx \sup_{\norm{f}_{A^p_\alpha}=1}
\varlimsup_{\abs{z}\to 1}\norm{S_z f}_{A^p_\alpha}.
$$
\end{cor}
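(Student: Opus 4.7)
The plan is to compose the previous theorem, which gave $\norm{S}_e\approx\sup_{x\in M_{\mathcal{A}}\setminus\Bn}\norm{S_x}_{\mathcal{L}(A^p_\alpha)}$, with the identity
\begin{equation*}
\sup_{x\in M_{\mathcal{A}}\setminus\Bn}\norm{S_x}_{\mathcal{L}(A^p_\alpha)}
\;=\;\sup_{\norm{f}_{A^p_\alpha}=1}\varlimsup_{\abs{z}\to 1}\norm{S_z f}_{A^p_\alpha}.
\end{equation*}
Once this identity is established, the corollary follows immediately by chaining the two equivalences.

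For the inequality ``$\leq$'' in the identity, fix $x\in M_{\mathcal{A}}\setminus\Bn$ and a net $(z_\omega)$ in $\Bn$ with $z_\omega\to x$; necessarily $\abs{z_\omega}\to 1$. By Proposition~\ref{SOTCon}, $S_{z_\omega}\to S_x$ in the strong operator topology, so for any fixed $f\in A^p_\alpha$ with $\norm{f}_{A^p_\alpha}=1$ we have $\norm{S_x f}_{A^p_\alpha}=\lim_{\omega}\norm{S_{z_\omega}f}_{A^p_\alpha}\leq\varlimsup_{\abs{z}\to 1}\norm{S_z f}_{A^p_\alpha}$. Taking the supremum over unit $f$ gives $\norm{S_x}_{\mathcal{L}(A^p_\alpha)}\leq\sup_{\norm{f}=1}\varlimsup_{\abs{z}\to 1}\norm{S_zf}_{A^p_\alpha}$, and then the supremum over $x\in M_{\mathcal{A}}\setminus\Bn$ yields the desired bound.

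For the reverse inequality ``$\geq$'', fix $f\in A^p_\alpha$ with $\norm{f}_{A^p_\alpha}=1$ and choose a sequence $\{z_k\}\subset\Bn$ with $\abs{z_k}\to 1$ realizing the limsup, i.e. $\norm{S_{z_k}f}_{A^p_\alpha}\to \varlimsup_{\abs{z}\to 1}\norm{S_z f}_{A^p_\alpha}$. Using compactness of $M_{\mathcal{A}}$, extract a subnet $(z_\omega)$ converging to some $x\in M_{\mathcal{A}}$; since $\abs{z_k}\to 1$ forces $x\notin\Bn$, we have $x\in M_{\mathcal{A}}\setminus\Bn$. Proposition~\ref{SOTCon} again yields $S_{z_\omega}f\to S_x f$ in $A^p_\alpha$, so
\begin{equation*}
\varlimsup_{\abs{z}\to 1}\norm{S_z f}_{A^p_\alpha}
=\lim_\omega\norm{S_{z_\omega}f}_{A^p_\alpha}
=\norm{S_x f}_{A^p_\alpha}
\leq\norm{S_x}_{\mathcal{L}(A^p_\alpha)}
\leq\sup_{y\in M_{\mathcal{A}}\setminus\Bn}\norm{S_y}_{\mathcal{L}(A^p_\alpha)}.
\end{equation*}
Taking the supremum over all unit vectors $f$ completes the identity.

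The main (mild) obstacle is a bookkeeping one: one must be sure that \emph{for each fixed $f$} the approximation $S_{z_\omega}f\to S_x f$ is in norm, which is precisely what Proposition~\ref{SOTCon} delivers using $S\in\mathcal{T}_{p,\alpha}$. The weak operator extension of Proposition~\ref{WOTCon} alone would not be strong enough to conclude $\norm{S_x f}_{A^p_\alpha}=\lim_\omega\norm{S_{z_\omega}f}_{A^p_\alpha}$, so the hypothesis $S\in\mathcal{T}_{p,\alpha}$ is essential here.
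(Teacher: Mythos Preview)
Your proof is correct and follows essentially the same route as the paper: both arguments reduce the corollary to Theorem~\ref{EssentialviaSx} by establishing $\sup_{x\in M_{\mathcal{A}}\setminus\Bn}\norm{S_x}_{\mathcal{L}(A^p_\alpha)}=\sup_{\norm{f}=1}\varlimsup_{\abs{z}\to 1}\norm{S_z f}_{A^p_\alpha}$ via the $SOT$ continuity of Proposition~\ref{SOTCon} and the compactness of $M_{\mathcal{A}}$. The paper merely asserts this identity in one line, whereas you have written out the two inequalities in detail; your exposition is in fact more careful than the paper's.
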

\begin{proof}
It is easy to see from Lemma \ref{SOTCon} and the compactness of 
$M_{\mathcal{A}}$ that
$$
\sup_{x\in M_{\mathcal{A}}\setminus\Bn}\norm{S_xf}_{A^p_{\alpha}}=
\varlimsup_{\abs{z}\to 1}\norm{ S_z f}_{A^p_{\alpha}}.
$$
But then,
$$
\sup_{x\in M_{\mathcal{A}}\setminus\Bn}\norm{S_x}_{\mathcal{L}(A^p_{\alpha},
A^p_{\alpha})}= \sup_{\norm{f}_{A^p_\alpha}=1}\varlimsup_{\abs{z}\to 1}
\norm{S_z f}_{A^p_\alpha}.
$$
The result then follows from Theorem \ref{EssentialviaSx}.
\end{proof}

The next is the main result of the paper. 

\begin{thm}\label{thm:main}
Let\/ $1<p<\infty$, $\alpha>-1$ and\/ 
$S\in\mathcal{L}(A^p_{\alpha})$.  
Then $S$ is compact 
if and only if\/
$S\in\mathcal{T}_{p,\alpha}$ and $B(S)=0$ on $\partial\Bn$.
\end{thm}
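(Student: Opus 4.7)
\medskip

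\textbf{Proof plan.} The strategy is to treat the two implications separately, with the reverse implication being essentially a packaging of the structural results already proved in Sections 3--5.

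For the ``only if'' direction, suppose $S\in\mathcal{L}(A^p_{\alpha})$ is compact. The vanishing of the Berezin transform is immediate from the estimate recorded in \eqref{Vanishing} during the proof of Theorem~\ref{EssentialviaSx}: since $k_\xi^{(p,\alpha)}e\to 0$ weakly in $A^p_{\alpha}$ as $|\xi|\to 1$ (for each fixed $e\in\Cd$), compactness of $S$ gives $\|Sk_\xi^{(p,\alpha)}e\|_{A^p_\alpha}\to 0$, and then $|\ip{\widetilde{S}(\xi)e}{h}_{\Cd}|\lesssim \|Sk_\xi^{(p,\alpha)}e\|_{A^p_\alpha}\to 0$ for every $e,h\in\Cd$. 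The less trivial ingredient is showing $S\in\mathcal{T}_{p,\alpha}$. I would reduce to the scalar result of Engli\v{s}, cited in the introduction: for each pair $1\le i,j\le d$ the scalar operator $S_{ij}\in\mathcal{L}(A^p_{\alpha}(\Bn;\C))$ defined by $S_{ij}f:=\ip{S(fe_i)}{e_j}_{\Cd}$ is compact, hence lies in the scalar Toeplitz algebra. Reassembling via $S=\sum_{i,j}T_{E_{(j,i)}}\circ\iota_j\circ S_{ij}\circ\pi_i$ (where $\pi_i,\iota_j$ are the obvious coordinate projections/inclusions, all of which are themselves Toeplitz operators with constant matrix symbols $E_{(i,i)}$, $E_{(j,j)}$) places $S$ in $\mathcal{T}_{p,\alpha}$.

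For the ``if'' direction, the work has already been done. Assume $S\in\mathcal{T}_{p,\alpha}$ and $\widetilde{S}(z)\to 0$ as $|z|\to 1$. Proposition~\ref{BerezinVanish} then gives $S_x=0$ for every $x\in M_{\mathcal{A}}\setminus\Bn$. Feeding this into the upper bound of Theorem~\ref{EssentialviaSx} yields
\begin{equation*}
\norm{S}_e\lesssim \sup_{x\in M_{\mathcal{A}}\setminus\Bn}\norm{S_x}_{\mathcal{L}(A^p_{\alpha})}=0,
\end{equation*}
so $\norm{S}_e=0$ and $S$ is compact.

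The main obstacle is really the ``compact $\Rightarrow S\in\mathcal{T}_{p,\alpha}$'' step, since the previous sections of the paper do not address it directly; the rest of the theorem is a clean consequence of Theorem~\ref{EssentialviaSx} combined with Proposition~\ref{BerezinVanish}. The reduction to the scalar Engli\v{s} theorem via the entrywise decomposition is the natural way around this obstacle, using only that multiplication by the elementary matrices $E_{(i,j)}$ is itself a (bounded matrix-valued) Toeplitz operator in the algebra $\mathcal{T}_{p,\alpha}$.
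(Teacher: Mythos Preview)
Your ``if'' direction and the Berezin-transform-vanishing half of the ``only if'' direction match the paper exactly. The divergence is in showing that a compact $S$ lies in $\mathcal{T}_{p,\alpha}$.

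The paper argues directly and self-containedly: approximate $S$ by finite-rank operators, reduce to rank-one $f\otimes g$ with $f,g$ polynomials, factor $f\otimes g = T_{\widetilde f}\,(\mathbf{1}\otimes\mathbf{1})\,T_{\widetilde{g^*}}$ (where $\widetilde f$ is the diagonal matrix with entries of $f$ on the diagonal), and then identify $\mathbf{1}\otimes\mathbf{1}=T_{\delta_0 W}$ with $W$ the all-ones matrix, invoking Theorem~\ref{thm:msw4.7} to place this in $\mathcal{T}_{p,\alpha}$. No appeal to the scalar theorem is made.

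Your entrywise reduction to the scalar case is a legitimate alternative, but two points need repair. First, the reassembly formula you wrote is wrong as stated: $\pi_i$ and $\iota_j$ change spaces (vector $\leftrightarrow$ scalar), so they are not Toeplitz operators on $A^p_\alpha(\Bn;\Cd)$, and the extra factor $T_{E_{(j,i)}}$ you inserted kills all off-diagonal terms (since $E_{(j,i)}\iota_j g = \delta_{ij}\,g e_j$). What you actually need is that for $R$ in the \emph{scalar} Toeplitz algebra, the vector-valued operator $f\mapsto R(\ip{f}{e_i}_{\Cd})\,e_j$ lies in $\mathcal{T}_{p,\alpha}$; this holds because for a scalar product $T_{a_1}\cdots T_{a_m}$ one checks directly that
\[
f\longmapsto (T_{a_1}\cdots T_{a_m})(f_i)\,e_j \;=\; T_{a_1 E_{(j,j)}}\cdots T_{a_{m-1}E_{(j,j)}}\,T_{a_m E_{(j,i)}}\,f,
\]
and then one passes to sums and norm-closures. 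Second, the scalar input you need---that every compact operator on $A^p_\alpha(\Bn;\C)$ lies in the scalar Toeplitz algebra for general $1<p<\infty$ and $\alpha>-1$---is the main theorem of \cite{MSW} (or \cite{Sua} for $\alpha=0$), not the Engli\v{s} result \cite{ENG} you cite, which does not cover this generality. With those fixes your argument goes through; its advantage is brevity, while the paper's argument has the virtue of not importing the scalar theorem as a black box.
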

\begin{proof}
If $B(S)=0$ on $\partial\Bn$, Proposition \ref{BerezinVanish} says that $S_x=0$ 
for all $x\in M_{\mathcal{A}}\setminus\Bn$.
So, if $S\in \mathcal{T}_{p,\alpha}$, Theorem \ref{EssentialviaSx} gives that 
$S$ must be compact.

In the other direction, if $S$ is compact then $B(S)=0$ on $\partial\Bn$ by 
\eqref{Vanishing}.
So it only remains to show that $S\in\mathcal{T}_{p,\alpha}$.
Since every compact operator on $A^p_{\alpha}$ can be approximated by finite 
rank operators, it suffices to show that all rank one operators are in 
$\mathcal{T}_{p,\alpha}$. Rank one operators have the form $f\otimes g$, given 
by
$$
(f\otimes g) (h)=\ip{h}{g}_{A^2_\alpha}f,
$$
where $f\in A^p_{\alpha}$, $g\in A^q_{\alpha}$, and $h\in A^p_{\alpha}$.

We can further suppose that $f$ and $g$ are polynomials, since the polynomials 
are dense
in $A^p_\alpha$ and $A^q_\alpha$, respectively (recall that in the vector-valued
case a monomial is simply $z^{n}e$ where $e$ is a constant vector in $\Cd$). 
For a vector-valued function $f$, let $\widetilde{f}$ be the matrix-valued 
function whose diagonal is $f$ and all other entries are zero. That is, the 
$(i,i)$ entry of 
$\widetilde{f}$ is the $i^{th}$ entry of $f$ and all off diagonal entries are 
zero. Also, define $\bf{1}$ to be the vector in $\Cd$ consisting of all $1$'s.
Consider the following computation:
\begin{align*}
\left(T_{\widetilde{f}}(\bold{1}\otimes \bold{1})T_{\widetilde{g^{*}}}\right)h 
&=T_{\widetilde{f}}\left(\ip{P(\widetilde{g^{*}}h)}
    {\bf{1}}_{A_{\alpha}^{2}}\bf{1}\right)
\\&= P\left(\ip{P(\widetilde{g^{*}}h)}{\bf{1}}_{A_{\alpha}^{2}}
    \widetilde{f}\bf{1}\right)
\\&= P\left(\ip{h}{\widetilde{g}\bf{1}}_{A_{\alpha}^{2}}
    \widetilde{f}\bf{1}\right)
\\&=\ip{h}{g}_{A_{\alpha}^{2}}f
\\&=(f\otimes g)(h).
\end{align*}
So, it suffices to show that $\bold{1}\otimes \bold{1}\in 
\mathcal{T}_{p,\alpha}$. Let $W$ be the matrix consisting of all $1$'s, and let 
$\delta_{0}$ be the point mass at $0$. Then:
\begin{align*}
T_{\delta_{0}W}h&=Wh(0)
\\&=\left(\sum_{i=1}^{d}\ip{h(0)}{e_{i}}_{\Cd}\right)\bold{1}
\\&=\int_{\Bn}\ip{h(z)}{\bold{1}}_{\Cd}\bold{1}dv_{\alpha}(z)
\\&= \ip{h}{\bf{1}}_{A_{\alpha}^{2}}\bf{1}
\\&= ({\bf 1}\otimes{\bf 1})(h).
\end{align*}

By Theorem \ref{thm:msw4.7}, \begin{math}T_{\delta_{0}W}\end{math} is a member of $\mathcal{T}_{p,\alpha}$.
\end{proof}
\section{Acknowledgements} 
The first author would like to thank Michael Lacey for supporting him as a
research assistant for the Spring semester of 2014 (NSF DMS grant \#1265570)
and Brett Wick for supporting him as a research
assistant for the Summer semester of 2014 (NSF DMS grant \#0955432).

\begin{bibdiv}
\begin{biblist}

\bib{AE}{article}{
   author={Ali, S. Twareque},
   author={Engli{\v{s}}, M.},
   title={Berezin-Toeplitz quantization over matrix domains},
   conference={
      title={Contributions in mathematical physics},
   },
   book={
      publisher={Hindustan Book Agency, New Delhi},
   },
   date={2007},
   pages={1--36}
}

\bib{AZ}{article}{
   author={Axler, Sheldon},
   author={Zheng, Dechao},
   title={Compact operators via the Berezin transform},
   journal={Indiana Univ. Math. J.},
   volume={47},
   date={1998},
   number={2},
   pages={387--400}
}

\bib{AZ2}{article}{
   author={Axler, Sheldon},
   author={Zheng, Dechao},
   title={The Berezin transform on the Toeplitz algebra},
   journal={Studia Math.},
   volume={127},
   date={1998},
   number={2},
   pages={113--136}
}

\bib{Bi}{article}{
   author={Bauer, Wolfram},
   author={isralowitz, Joshua},
   title={Compactness characterization of operators in the Toeplitz algebra of 
   the Fock 
   space $F_\alpha^p$},
   journal={J. Funct. Anal.},
   volume={263},
   date={2012},
   number={5},
   pages={1323--1355},
   eprint={http://arxiv.org/abs/1109.0305v2}
}

\bib{B}{article}{
   author={Berndtsson, Bo},
   title={Interpolating sequences for $H^\infty$ in the ball},
   journal={Nederl. Akad. Wetensch. indag. Math.},
   volume={47},
   date={1985},
   number={1},
   pages={1--10}
}

\bib{CR}{article}{
   author={Coifman, R. R.},
   author={Rochberg, R.},
   title={Representation theorems for holomorphic and harmonic functions in
   $L^{p}$},
   conference={
      title={Representation theorems for Hardy spaces},
   },
   book={
      series={Ast\'erisque},
      volume={77},
      publisher={Soc. Math. France},
      place={Paris},
   },
   date={1980},
   pages={11--66}
}

\bib{cs}{article}{
   author={{\u{C}}u{\u{c}}kovi{\'c}, {\u{Z}}eljko},
   author={{\c{S}}ahuto{\u{g}}lu, S{\"o}nmez},
   title={Axler-Zheng type theorem on a class of domains in $\Bbb{C}^n$},
   journal={Integral Equations Operator Theory},
   volume={77},
   date={2013},
   number={3},
   pages={397--405}
}

\bib{DU}{book}{
   author={Diestel, J.},
   author={Uhl, J. J., Jr.},
   title={Vector measures},
   note={With a foreword by B. J. Pettis;
   Mathematical Surveys, No. 15},
   publisher={American Mathematical Society, Providence, R.I.},
   date={1977},
   pages={xiii+322}
}

\bib{eng}{article}{
   author={Engli{\v{s}}, Miroslav},
   title={Compact Toeplitz operators via the Berezin transform on bounded
   symmetric domains},
   journal={Integral Equations Operator Theory},
   volume={33},
   date={1999},
   number={4},
   pages={426--455}
}

\bib{ENG}{article}{
   author={Engli{\v{s}}, Miroslav},
   title={Density of algebras generated by Toeplitz operator on Bergman
   spaces},
   journal={Ark. Mat.},
   volume={30},
   date={1992},
   number={2},
   pages={227--243}
}

\bib{HJ}{book}{
   author={Horn, Roger A.},
   author={Johnson, Charles R.}
   title={Matrix analysis},
   publisher={Cambridge University Press},
   place={Cambridge, UK},
   date={1985},
   pages={xiii+529}
}

\bib{i}{article}{
   author={Issa, Hassan},
   title={Compact Toeplitz operators for weighted Bergman spaces on bounded
   symmetric domains},
   journal={integral Equations Operator Theory},
   volume={70},
   date={2011},
   number={4},
   pages={569--582}
}

\bib{K}{article}{
   author={Kerr, Robert},
   title={Products of Toeplitz operators on a vector valued Bergman space},
   journal={Integral Equations Operator Theory},
   volume={66},
   date={2010},
   number={3},
   pages={367--395}
}

\bib{kra}{book}{
   author={Krantz, S.},
   title={Function Theory of Several Complex Variables},
   series={Graduate Texts in Mathematics},
   volume={226},
   publisher={Wadsworth \& Brooks/Cole Advanced Books and Software},
   place={Pacific Grove, California},
   date={1992}
}

\bib{LH}{article}{
   author={Li, Song Xiao},
   author={Hu, Jun Yun},
   title={Compact operators on Bergman spaces of the unit ball},
   language={Chinese, with English and Chinese summaries},
   journal={Acta Math. Sinica (Chin. Ser.)},
   volume={47},
   date={2004},
   number={5},
   pages={837--844}
}

\bib{L}{article}{
   author={Luecking, Daniel H.},
   title={Representation and duality in weighted spaces of analytic
   functions},
   journal={indiana Univ. Math. J.},
   volume={34},
   date={1985},
   number={2},
   pages={319--336}
}

\bib{mz}{article}{
   author={Miao, Jie},
   author={Zheng, Dechao},
   title={Compact operators on Bergman spaces},
   journal={Integral Equations Operator Theory},
   volume={48},
   date={2004},
   number={1},
   pages={61--79}
}

\bib{MSW}{article}{
   author={Mitkovski, Mishko},
   author={Su{\'a}rez, Daniel},
   author={Wick, Brett D.},
   title={The essential norm of operators on $A^p_\alpha(\Bbb{B}_n)$},
   journal={integral Equations Operator Theory},
   volume={75},
   date={2013},
   number={2},
   pages={197--233}
}

\bib{mw1}{article}{
    author={Mitkovski, Mishko},
    author={Wick, Brett},
    title={The essential norm of operators on $A_{\alpha}(\mathbb{D}^{n})$},
    eprint={http://arxiv.org/abs/1208.5819v3}
}

\bib{mw2}{article}{
    author={Mitkovski, Mishko},
    author={Wick, Brett},
    title={A reproducing kernel thesis for operators on Bergman--type 
    function spaces},
    journal={J. Funct. Analysis},
    status={to appear},
    eprint={http://arxiv.org/abs/1212.0507v3}
}

\bib{NZZ}{article}{
   author={Nam, Kyesook},
   author={Zheng, Dechao},
   author={Zhong, Changyong},
   title={$m$-Berezin transform and compact operators},
   journal={Rev. Mat. iberoam.},
   volume={22},
   date={2006},
   number={3},
   pages={867--892}
}

\bib{RAI}{article}{
   author={Raimondo, Roberto},
   title={Toeplitz operators on the Bergman space of the unit ball},
   journal={Bull. Austral. Math. Soc.},
   volume={62},
   date={2000},
   number={2},
   pages={273--285}
}

\bib{RR}{article}{
   author={Robertson, James B.},
   author={Rosenberg, Milton},
   title={The decomposition of matrix-valued measures},
   journal={Michigan Math. J.},
   volume={15},
   date={1968},
   pages={353--368}
}

\bib{St}{article}{
   author={Stroethoff, Karel},
   title={Compact Toeplitz operators on Bergman spaces},
   journal={Math. Proc. Cambridge Philos. Soc.},
   volume={124},
   date={1998},
   number={1},
   pages={151--160}
}

\bib{SZ}{article}{
   author={Stroethoff, Karel},
   author={Zheng, Dechao},
   title={Toeplitz and Hankel operators on Bergman spaces},
   journal={Trans. Amer. Math. Soc.},
   volume={329},
   date={1992},
   number={2},
   pages={773--794}
}

\bib{Sua}{article}{
   author={Su{\'a}rez, Daniel},
   title={The essential norm of operators in the Toeplitz algebra on $A^p
   (\mathbb{B}_n)$},
   journal={Indiana Univ. Math. J.},
   volume={56},
   date={2007},
   number={5},
   pages={2185--2232}
}

\bib{Sua2}{article}{
   author={Su{\'a}rez, Daniel},
   title={Approximation and the $n$-Berezin transform of operators on the
   Bergman space},
   journal={J. Reine Angew. Math.},
   volume={581},
   date={2005},
   pages={175--192}
}

\bib{YS}{article}{
   author={Yu, Tao},
   author={Sun, Shan Li},
   title={Compact Toeplitz operators on the weighted Bergman spaces},
   language={Chinese, with English and Chinese summaries},
   journal={Acta Math. Sinica (Chin. Ser.)},
   volume={44},
   date={2001},
   number={2},
   pages={233--240}
}

\bib{Zhu}{book}{
   author={Zhu, Kehe},
   title={Spaces of holomorphic functions in the unit ball},
   series={Graduate Texts in Mathematics},
   volume={226},
   publisher={Springer-Verlag},
   place={New York},
   date={2005},
   pages={x+271}
}

\bib{zhu1}{book}{
   author={Zhu, Kehe},
   title={Operator theory in function spaces},
   series={Mathematical Surveys and Monographs},
   volume={226},
   publisher={AMS},
   place={Rhode Island},
   date={2007},
   pages={xvi+348}
}

\end{biblist}
\end{bibdiv}

\end{document}